\numberwithin{equation}{section}
\def\R{\mathbb{R}}
\newtheorem{theorem}{Theorem}[section]
\newtheorem{lemma}{Lemma}[section]
\newtheorem{assumption}{Assumption}[section]
\newtheorem{proposition}{Proposition}[section]
\newtheorem{remark}{Remark}[section]
\newtheorem{corollary}{Corollary}[section]
\newcommand\keywords[1]{\textbf{Keywords:} #1}
\DeclareMathOperator*{\argmin}{argmin}
\newcommand{\tone}{\textbf{1}}
\newtheorem*{theorem*}{Theorem}
\begin{document}
\title{Relative entropy estimate and geometric ergodicity for implicit Langevin Monte Carlo
}

\author[a]{Lei Li\thanks{E-mail:leili2010@sjtu.edu.cn}}
\author[b,c]{Jian-Guo Liu\thanks{E-mail:jliu@math.duke.edu}}
\author[b]{Yuliang Wang\thanks{E-mail:yuliang.wang2@duke.edu}}
\affil[a]{School of Mathematical Sciences, Institute of Natural Sciences, MOE-LSC, Shanghai Jiao Tong University, Shanghai, 200240, P.R.China.}
\affil[b]{Department of Mathematics,  Duke University, Durham, NC 27708, USA.}
\affil[c]{Department of Physics, Duke University, Durham, NC 27708, USA.}

\date{}
\maketitle

\begin{abstract}
We study the implicit Langevin Monte Carlo (iLMC) method, which simulates the overdamped Langevin equation via an implicit iteration rule. In many applications, iLMC is favored over other explicit schemes such as the (explicit) Langevin Monte Carlo (LMC). LMC may blow up when the drift field $\nabla U$ is not globally Lipschitz, while iLMC has convergence guarantee when the drift is only one-sided Lipschitz. Starting from an adapted continuous-time interpolation, we prove a time-discretization error bound under the relative entropy (or the Kullback-Leibler divergence), where a crucial gradient estimate for the logarithm numerical density is obtained via a sequence of PDE techniques, including Bernstein method. Based on a reflection-type continuous-discrete coupling method, we prove the geometric ergodicity of iLMC under the Wasserstein-1 distance. Moreover, we extend the error bound to a uniform-in-time one by combining the relative entropy error bound and the ergodicity. Our proof technique is universal and can be applied to other implicit or splitting schemes for simulating stochastic differential equations with non-Lipschitz drifts.
\end{abstract}

\keywords{non-Lipschitz drift, relative entropy estimate, reflection coupling, gradient estimate, sampling}

\textbf{MSC number:} 82M31, 65C30, 60H10.




\section{Introduction}

Effective simulation of a stochastic differential equation (SDE) is crucial in many real-world applications, including generative diffusion models, high-dimensional Bayesian inference, molecular dynamics, finance, etc \cite{higham2011stochastic, hodgkinson2021implicit, chewi2024analysis, deng2020semi}. We are in particular interested in simulating SDEs whose drifts may grow super-linearly. It is known that in the case where the drift is not Lipschitz, explicit schemes such as the (forward) Euler-Maruyama scheme may blow up \cite{mattingly2002ergodicity,hutzenthaler2011strong}, while implicit schemes tend to be more stable and have convergence guarantee when the drift field is only one-sided Lipschitz \cite{hu1996semi, higham2002strong, liu2023backward}. 
In this paper, we study the \textbf{implicit Langevin Monte Carlo} (iLMC) method, which simulates overdamped Langevin equation via an implicit iteration rule. The iLMC can also be viewed as an effective high-dimensional sampling algorithm, since the overdamped Langevin equation with a potential function $U$ has an invariant measure $\pi \propto e^{-U}$, which can be viewed as the target distribution in many practical sampling tasks. In applications, iLMC is favored over other explicit schemes (for instance, the Langevin Monte Carlo (LMC), which is the explicit Euler's scheme for the overdamped Langevin equation), especially when the tail of the target distribution behaves like $e^{-|x|^p}$ ($p > 2$).

Let us first explain the iLMC iteration. Given a potential $U: \mathbb{R}^d \rightarrow \mathbb{R}$ and a standard $d$-dimensional Brownian motion $(W_t)_{t\geq 0}$ on a probability space $(\Omega, \mathcal{F}, \mathbb{P})$ with the natural filtration $(\mathcal{F}_t)_{t\geq 0}$, the overdamped Langevin equation is given by:
\begin{equation}\label{eq:overdamped}
    dX = -\nabla U(X) dt + \sqrt{2}\, dW.
\end{equation}
Given a constant step size $h$, denote $t_n := nh$ for $n = 0,1,2,\dots$. The iLMC, or equivalently the backward Euler's discretization of \eqref{eq:overdamped} is then given by
\begin{equation}\label{eq:ilmciteration}
    X^h_{t_{n+1}} = X^h_{t_n} - h \nabla U(X^h_{t_{n+1}}) + \sqrt{2} \,\Delta W_n,
\end{equation}
where $\Delta W_n = W_{t_{n+1}} - W_{t_n}$ is the Wiener increment and the implicit nature of the scheme comes from evaluating the gradient at $X^h_{t_{n+1}}$ instead of $X^h_{t_n}$. Note that we will assume in Assumption \ref{ass0} below that $\nabla^2 U$ is continuous and $U$ is strongly convex in the far field. Consequently, the mapping $x \mapsto x + h \nabla U(x)$ is reversible, and so the iLMC iteration \eqref{eq:ilmciteration} is always well-defined. Moreover, for small $h$, \eqref{eq:ilmciteration} is identical to the one-step iteration of the minimizing movement scheme \cite{de1993new, ambrosio1995minimizing}
\begin{equation}\label{eq:jko}
    X^h_{t_{n+1}} = \argmin_{x \in \mathbb{R}^d} \left\{U(x) + \frac{1}{2h}\left|x - (X^h_{t_n} + \sqrt{2}\,\Delta W_n)\right|^2 \right\},
\end{equation}
which is well-defined and has stability under Assumption \ref{ass0}. See more details in Proposition \ref{prop:PhiLip} below.

In literature, there exist plenty of results involving theoretical analysis for the above backward Euler's discretization, even in the presence of the Brownian motion. When the coefficients of SDEs are all Lipschitz, classical convergence theorems tell us that the backward Euler's scheme has first-order strong convergence and second-order weak convergence \cite{kp92}. In past decades, researchers have been more interested in the case of non-Lipschitz drift coefficients. To our knowledge, the earliest result on strong convergence of backward Euler's scheme is \cite{hu1996semi}. Mainly under the one-sided Lipschitz assumption for the drift, the author of \cite{hu1996semi} proved a first-order strong convergence of the form $\int_0^T \mathbb{E}|X^h_t - X_t|^2 dt$, where $X^h_t$ is the same continuous-time interpolation as we use in this paper (see \eqref{eq:interpolation} below).
In \cite{higham2002strong}, mainly assuming the one-sided Lipschitz and polynomial growth conditions for the drift function, the authors obtained a (finite-time) first-order strong convergence under a stronger metric $\mathbb{E}[\sup_{0\leq t\leq T} |\bar{X}^h_t - X_t|^2]$, where they used a different continuous-time interpolation $\bar{X}^h_t$ based on an intermediate split-step backward Euler scheme. The convergence analysis for more variants of backward Euler's scheme applied to various models \cite{higham2007strong, mao2013strong, he2015unconditional, zhang2019backward, deng2020semi} under various metrics, such as weak convergence \cite{wang2024weak}, $L^p$ strong convergence \cite{liu2022p}, convergence under Wasserstein distances \cite{liu2023backward}, etc. Remarkably, for the simulation of overdamped Langevin equation, the authors of \cite{hodgkinson2021implicit} studied the $\theta$-Euler's scheme (semi-implicit, semi-explicit), and a total-variation-based geometric ergodicity and a central-limit-type theorem were established.
However, to the best of our knowledge, existing results is limited to weaker metrics such as Wasserstein distances. In recent years, the relative entropy (or the more general R\'enyi divergence) has received increasing popularity when measuring the effectiveness of sampling algorithms including iLMC. Note that although not a true distance, the relative entropy can control other classical distances such as total variation and Wasserstein distances though some transportation inequalities \cite{otto2000generalization,talagrand1991new,bolley2005weighted,pinsker1963information}.

Motivated by this, we provide a novel approach to study the relative entropy error of iLMC. Below, we briefly summarize the main contributions of this paper.
First, in Sections \ref{sec:error} and \ref{sec:nablalogrho}, starting with a continuous-time interpolation and explicitly expressing the iLMC with an adapted It\^o's process, we prove a second-order error bound in terms of the relative entropy. 
In detail, we consider the interpolation
\begin{equation}\label{eq:interpolation}
    X^h_s = X^h_{t_n} - (s - t_n)\nabla U(X^h_s) + \sqrt{2}(W_s - W_{t_n}),\quad s \in [t_n, t_{n+1}).
\end{equation}
Note that stochastic processes defined in \eqref{eq:overdamped}, \eqref{eq:ilmciteration}, \eqref{eq:interpolation} are driven by the same Brownian motion. However, our analysis would not be influenced if one chooses different Brownian motions to define these processes, since in both our results and proofs, we only focus on the law of these processes, namely, the behaviors of Fokker-Planck equations rather than SDEs.
Clearly, under Assumption \ref{ass0} below, the process $X^h_s$ is always well-defined, and the solution coincides with \eqref{eq:ilmciteration} at the time grids $t_n$ ($n = 0,1,2,\dots$). It is also obvious that the process $X^h_s$ is adapted. Now, although the continuous-time interpolation \eqref{eq:interpolation} is of an implicit form, we can in fact rewrite it via It\^o's calculus (see the explicit formula in \eqref{eq:explicitSDE} below). Then we analyze the relative entropy error based on an explicit Fokker-Planck equation describing the time evolution of the law of $X^h_s$. Based on Assumptions \ref{ass0} -- \ref{ass2} below, we prove the following relative entropy error bound (see Theorem \ref{thm:errormain} for a complete statement):

\begin{theorem*}
Fix $T>0$. Denote $\rho^h_s$, $\rho_s$ the laws of $X^h_s$, $X_s$, respectively. Then for small time step $h$ one has
\begin{equation}\label{eq:relativeentropyerror}
    \sup_{s \in [0,T]}\mathcal{H}\left(\rho^h_s \mid \rho_s \right) \leq Ch^2.
\end{equation}
Here, $C$ is a positive constant that may depend algebraically on $T$, and $\mathcal{H}$ denotes the relative entropy.
\end{theorem*}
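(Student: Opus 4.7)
The plan is to differentiate $\mathcal{H}(\rho^h_s|\rho_s)$ directly via the two Fokker--Planck equations, identify a negative Fisher-dissipation term, bound the remaining drift- and diffusion-mismatch contributions by $O(h^2)$, and close with Gronwall. To begin, I would apply It\^o's formula to the implicit relation \eqref{eq:interpolation} (this should be the explicit form \eqref{eq:explicitSDE} referenced in the excerpt) to rewrite $X^h$ as
\[ dX^h_s = b^h(s,X^h_s)\,ds + \sqrt{2}\,A_s\,dW_s,\qquad A_s := \bigl(I+(s-t_n)\nabla^2 U(X^h_s)\bigr)^{-1}, \]
where $A_s$ is symmetric and positive definite under Assumption \ref{ass0}, and $b^h$ is an explicit function of $\nabla U$, $\nabla^2 U$, and an It\^o correction. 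Expanding in $\tau := s-t_n\in[0,h]$ gives the pointwise bounds $|b^h + \nabla U| = O(h)$ and $\|A_s^2 - I\| = O(h)$, with explicit dependence on higher derivatives of $U$.

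Next, setting $f = \log(\rho^h/\rho)$ and integrating by parts in the two Fokker--Planck equations, the classical relative entropy dissipation identity generalizes to
\[ \frac{d}{ds}\mathcal{H}(\rho^h_s|\rho_s) = -\int|\nabla f|^2\rho^h\,dx + \int \rho^h(b^h+\nabla U)\cdot\nabla f\,dx + \int \rho^h(A_s^2-I):\nabla^2 f\,dx. \]
The Fisher piece is the friendly negative term. The drift-mismatch term is handled immediately by Cauchy--Schwarz and Young: it is bounded by $\tfrac14\int|\nabla f|^2\rho^h + C\int |b^h+\nabla U|^2\rho^h$, and the latter is $O(h^2)$ once uniform-in-$s$ moment bounds for $\rho^h$ are available. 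Those moments follow from Assumption \ref{ass0} (one-sided Lipschitz / far-field strong convexity) together with a standard energy estimate on the implicit scheme.

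The diffusion-mismatch integral is the delicate piece: after one integration by parts it becomes
\[ -\int \rho^h\,\nabla\log\rho^h\cdot(A_s^2-I)\nabla f\,dx \;-\; \int \rho^h\,\bigl(\nabla\cdot(A_s^2-I)\bigr)\cdot\nabla f\,dx, \]
in which $\nabla\log\rho^h$ appears explicitly. Controlling it forces an a priori pointwise or weighted estimate on $\nabla\log\rho^h_s$, and this is the main technical obstacle. The natural strategy, and the one the excerpt flags, is the Bernstein method applied to the Fokker--Planck equation for $v := \log\rho^h$: $v$ satisfies a viscous Hamilton--Jacobi-type equation whose coefficients involve $A_s$, $\nabla U$, and $\nabla^2 U$, and differentiating that equation, multiplying by $\nabla v$, and exploiting far-field strong convexity of $U$ should produce a parabolic inequality for $|\nabla v|^2$ whose maximum principle yields a bound of the form $|\nabla\log\rho^h_s(x)|\leq K(1+|x|)$ uniformly in $s\in[0,T]$. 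The non-constant, $h$-dependent diffusion matrix $A_s^2$ and the fact that the coefficients carry derivatives of $U$ that may grow super-linearly make this PDE estimate the real technical core. With that bound in hand, Cauchy--Schwarz and Young absorb another $\tfrac14\int|\nabla f|^2\rho^h$ into the dissipation and leave only an $O(h^2)$ remainder; summing gives $\frac{d}{ds}\mathcal{H}(\rho^h_s|\rho_s)\leq Ch^2$, and Gronwall over $[0,T]$ yields $\sup_{s\in[0,T]}\mathcal{H}(\rho^h_s|\rho_s)\leq CTh^2$ with $C$ algebraic in $T$, as claimed.
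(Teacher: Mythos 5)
Your proposal follows the same architecture as the paper's proof: differentiate $\mathcal{H}(\rho^h_s\mid\rho_s)$ via the two Fokker--Planck equations, isolate the Fisher dissipation, bound the drift-mismatch and diffusion-mismatch terms by $O(h^2)$ using Young's inequality, and supply the crucial pointwise estimate on $\nabla\log\rho^h$ via a Bernstein-type argument for the Hamilton--Jacobi equation satisfied by $\log\rho^h$.

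One claim deserves correction, because it touches the key subtlety of this estimate. You write that $\|A_s^2-I\|=O(h)$ pointwise. This is false in general: since $\Lambda_h-I=A_s^2-I=-(s-t_n)\,A_s^2\,\nabla^2U\,(2I+(s-t_n)\nabla^2U)$, the factor multiplying $(s-t_n)$ grows polynomially in $x$ through $\nabla^2U$, and when $(s-t_n)|\nabla^2 U|$ is large the whole quantity is only $O(1)$. If the pointwise $O(h)$ bound held, the Fisher information alone would close the estimate and the Bernstein machinery would be superfluous; the fact that it fails is precisely why the paper must pass to an $L^{2+\epsilon}$ (rather than $L^2$) moment of $\nabla\log\rho^h_s(X^h_s)$, a point the paper flags explicitly in a remark after the theorem. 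Your proposal recovers by invoking the pointwise Bernstein bound, so the plan is structurally sound, but you should be aware this step is essential, not optional. Relatedly, the Bernstein estimate the paper establishes is $|\nabla\log\rho^h_t(x)|\leq CT(1+|x|^{\ell_0})$ with $\ell_0\geq 1$ and a linear-in-$T$ coefficient (coming from the two-sided bounds on $\rho^h_t$ in Lemma \ref{lmm:logrho}); your stronger claim $K(1+|x|)$ uniformly in $s\in[0,T]$ is not what the method delivers, and the $T$-dependence of the gradient bound is what produces the $T^{3+\epsilon}$ in the final estimate rather than the linear $T$ you state.
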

Notably, the rigorous derivation for \eqref{eq:relativeentropyerror} also requires one to obtain a pointwise polynomial upper bound for $\nabla \log \rho^h$.
The non-Lipschitz  drift in the current settings makes the derivation more challenging compared with known results. We resolve this using a sequence of PDE techniques, including Bernstein method for gradient estimate \cite{bernstein1906generalisation,bernstein1910generalisation,li1986parabolic,du2024collision,ju2025modified, feng2023quantitative}. 
In fact,  applying Bernstein method, we are able
to obtain 
$$|\nabla u(x)| \leq \mathcal{P}(x)(1+|u(x)|)$$
where $u = \log ( \rho^h / M_0)$ ($M_0 > 0$) solves an Hamilton-Jacobi equation after Cole-Hopf transformation of $\rho^h$, and $\mathcal{P}(x)$ is a polynomial. Further, we show that $|u|$ itself has a polynomial upper bound by studying the tail behaviors of $\rho^h$. This then gives a polynomial upper-bound for $|\nabla \log \rho^h|$. 
See more details in Section \ref{sec:nablalogrho} below.

Another contribution of this work is a novel proof of the geometric ergodicity of iLMC (as a discrete-time Markov chain) in terms of the Wasserstein-1 distance. Under the far-field confining condition (Assumption \ref{ass0} below), we show that: (see the complete statement in Theorem \ref{thm:contraction} below)
\begin{theorem*}
Let $\mu_n$, $\nu_n$ be laws of iLMC solution $X^h_{t_n}$ with different initial distributions $\mu_0$, $\nu_0$. Then for small time step $h$, there exist positive constants $C_0$, $C$ independent of $h$ and $n$ such that
\begin{equation}\label{eq:W1contraction}
    W_1(\mu_n, \nu_n) \leq C_0e^{-Cnh} W_1(\mu_0, \nu_0).
\end{equation}
\end{theorem*}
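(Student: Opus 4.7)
The plan is to run an Eberle-type reflection coupling on the continuous-time interpolation $X^h_s$ from \eqref{eq:interpolation} and read off contractivity at the grid points; this is what the abstract means by a ``continuous-discrete'' coupling, since we couple two continuous interpolations driven by correlated Brownian motions and then restrict to times $t_n$.

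First, I would recast \eqref{eq:interpolation} as an explicit It\^o SDE on each interval $[t_n,t_{n+1}]$. Inverting the map $x\mapsto x+(s-t_n)\nabla U(x)$ (well posed by Assumption \ref{ass0} and Proposition \ref{prop:PhiLip}) and differentiating the implicit identity produces
\[
    dX^h_s = b^h(s,X^h_s)\,ds + \sqrt{2}\,\sigma^h(s,X^h_s)\,dW_s,\qquad s\in[t_n,t_{n+1}],
\]
with $b^h=-\nabla U+O(h)$ and a symmetric $\sigma^h=I+O(h)$ whose corrections are explicit in $\nabla^2 U$; under Assumption \ref{ass0}, $b^h$ inherits far-field dissipativity and $\sigma^h$ stays uniformly close to the identity for small $h$. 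I would then start from an optimal $W_1$-coupling $(X^h_0,Y^h_0)$ of $(\mu_0,\nu_0)$ and drive a second copy $Y^h_s$ by a Brownian motion whose increments are the reflection of those driving $X^h_s$ through the unit vector $e_s=(X^h_s-Y^h_s)/|X^h_s-Y^h_s|$, smoothly interpolated to a synchronous coupling in a $\delta$-neighborhood of the diagonal to avoid singularities at $r_s=0$. Because $\sigma^h$ is symmetric, the reflected increments can be pushed through the diffusion consistently.

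Next, I would apply It\^o's formula to $f(r_s)$ with $r_s=|X^h_s-Y^h_s|$ and $f:[0,\infty)\to[0,\infty)$ an Eberle concave, strictly increasing function equivalent to the identity and tailored to the convexity profile of $U$ (possibly non-convex in a bounded region, convexity rate $\geq\kappa_\infty>0$ at infinity by Assumption \ref{ass0}). Combining the dissipativity from $-\nabla U$ with the $4f''(r_s)$ gain from the reflected noise, and using $b^h+\nabla U=O(h)$, $\sigma^h-I=O(h)$, one obtains a differential inequality of the form
\[
    \tfrac{d}{ds}\E\bigl[f(r_s)\bigr] \le -c\,\E\bigl[f(r_s)\bigr] + Ch\,\E[r_s]\quad\text{on }[t_n,t_{n+1}].
\]
Since $f\asymp r$, for small $h$ the remainder is absorbed into a slightly smaller contraction rate; iterating across the grid and using $\E r_0 = W_1(\mu_0,\nu_0)$ for the initial optimal coupling yields the claimed $W_1(\mu_n,\nu_n)\le C_0 e^{-Cnh}W_1(\mu_0,\nu_0)$.

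The main obstacle is the anisotropic, state-dependent diffusion $\sigma^h$ produced by the implicit interpolation: Eberle's estimate is cleanest when the noise is constant and isotropic, and here both the reflection direction and the coefficient in front of $f''(r_s)$ pick up corrections involving $\nabla^2 U(X^h_s)$ and even $\nabla^3 U$, together with the mismatch between $\sigma^h(X^h_s)$ and $\sigma^h(Y^h_s)$, which is only polynomially controlled away from the confining region. Ensuring that these corrections and the $O(h)$ remainder genuinely stay subleading compared to the Eberle contraction rate, uniformly in the state, is the delicate step and relies on combining the far-field strong convexity in Assumption \ref{ass0} with a careful, radius-adapted choice of $f$ and of the smooth switching cutoff $\delta$.
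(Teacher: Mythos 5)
Your strategy is genuinely different from the paper's, and the difference matters. The paper does \emph{not} run a reflection coupling on the interpolated It\^o process \eqref{eq:explicitSDE}. Instead it exploits the split-step structure $X^h_{t_n}\mapsto \tilde X^h_{t_{n+1}}=X^h_{t_n}+\sqrt{2}\,\Delta W_n\mapsto X^h_{t_{n+1}}=\Phi_h^{-1}(\tilde X^h_{t_{n+1}})$: the noise step is \emph{pure, isotropic, constant} Brownian motion, on which the standard Eberle reflection coupling applies cleanly (Lemma \ref{lmm:ddtEfZ}), and the drift step is a \emph{deterministic} contraction-in-the-far-field map whose one-step action on $f(|\cdot|)$ is controlled by Lemma \ref{lmm:fPhi}. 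The two effects are then glued with a case decomposition over $|Z^h_{t_n}|\lessgtr h^{1/2-\delta}$ and $\lessgtr 2R'$ together with sub-Gaussian tail bounds (Lemmas \ref{lmm:subgaussian}, \ref{lmm:technical1}, \ref{lmm:technical2}). This is what the phrase ``continuous-discrete coupling'' refers to: continuous-time reflected Brownian motion inside each interval, discrete deterministic map at the endpoints.

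Your version, by contrast, couples the SDE \eqref{eq:explicitSDE} with multiplicative diffusion $\sigma^h(s,x)=(I+(s-t_n)\nabla^2U(x))^{-1}$, and this is where the gap is. The claim that $\sigma^h=I+O(h)$ \emph{uniformly} is false: $|\sigma^h|\le e^{2Mh}$, but the smallest eigenvalue is $(1+(s-t_n)\lambda_{\max}(\nabla^2U(x)))^{-1}$, which goes to $0$ as $|x|\to\infty$ since $\nabla^2U$ is unbounded under Assumption \ref{ass1}. Two consequences follow. First, the $4f''(r_s)$ noise gain in the one-dimensional It\^o formula is actually multiplied by (a quadratic form in) $\sigma^h$, so the gain degenerates exactly where you would want it most in the intermediate region, and there is no uniform modulus to plug into an Eberle-type ODE for $f$. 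Second, and more seriously, because $\sigma^h(X^h_s)\ne\sigma^h(Y^h_s)$, the reflected increments do \emph{not} cancel in the directions orthogonal to $e_s$; the distance $r_s$ is no longer a one-dimensional diffusion, and one picks up an extra martingale and quadratic-variation term controlled only by $|\sigma^h(X^h_s)-\sigma^h(Y^h_s)|$, which grows polynomially in $|X^h_s|,|Y^h_s|$. Handling reflection coupling with genuinely multiplicative, degenerate noise is a known hard problem and is not a ``delicate bookkeeping step'' that a radius-adapted $f$ will absorb. The split-step decomposition in the paper is precisely designed to sidestep both issues by never putting the implicit correction into the diffusion coefficient at all; I would recommend reworking your argument along those lines.
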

A direct consequence of the Wasserstein contraction result \eqref{eq:W1contraction} is that:  iLMC as a discrete-time Markov chain has a unique invariant measure $\pi^h$, and the law of iLMC converges exponentially fast to $\pi^h$ under Wasserstein-1 distance.
In order to prove \eqref{eq:W1contraction}, similarly as in \cite{li2025ergodicity}, we propose a reflection-type continuous-discrete coupling method. Intuitively, each step of the iLMC iteration \eqref{eq:ilmciteration} can be separated into two steps -- the (pure) diffusion step and the deterministic mapping step. The deterministic step can be shown to be stable, and contractive in the far-field region. For the diffusion step, we make use of the continuous-time reflection coupled Brownian motions and a concave, increasing Lyapunov function. 
Note that reflection coupling is a classical technique to study the contraction of It\^o's processes with drifts that are dissipative only in the far field. In recent decades, it has been applied to analyze various systems such as the overdamped Langevin equation \cite{eberle2016reflection, wang2020exponential, luo2016exponential}, the underdamped Langevin equation \cite{eberle2019couplings,schuh2024global}, the interacting particle systems \cite{eberle2016reflection, jin2023ergodicity}, (discrete-time) Langevin Monte Carlo \cite{li2024geometric, majka2020nonasymptotic}, to name a few. Combining the estimates for the two steps, we are able to prove \eqref{eq:W1contraction}. The detailed derivation is given in Section \ref{sec:ergodicity} below. 

Finally, by combining the relative entropy error bound and the Wasserstein-1 contraction result, and using the semigroup property as well as the propagation of some basic properties of the Fokker-Planck equation associated with the overdamped Langevin equation \eqref{eq:overdamped}, we extend the error estimate into a uniform-in-time one under the Wasserstein-1 distance. See more details in Section \ref{sec:longtime} below.

The rest of this paper is organized as follows. In Section \ref{sec:interpolation}, after introducing the basic assumptions, we propose the continuous-time interpolation of the iLMC iteration and explicitly derive the corresponding Fokker-Planck equation in detail. We then prove the main result of the relative entropy error estimate for iLMC in Section \ref{sec:error}. One key gradient estimate via Bernstein method for logarithm numerical density is derived in Section \ref{sec:nablalogrho}, and in this paper this result is used in Section \ref{sec:error}. In Section \ref{sec:ergodicity}, we prove the geometric ergodicity of iLMC using a reflection type coupling technique. Combining the results obtained in Section \ref{sec:error} -- \ref{sec:ergodicity}, we prove an extended Wasserstein-1 error bound for iLMC that is valid uniformly in time. Section \ref{sec:conclusion} gives some conclusion and further discussions, and some technical lemmas are proved in the Appendix.

\section{Setup and a continuous-time interpolation}\label{sec:interpolation}

As mentioned in the section above, our proof framework for the relative entropy error estimate begins with an adapted continuous-time interpolate process. We still denote it by $X^h_s$, $s \in [0,T]$ for some fixed $T > 0$.
\begin{equation}\label{eq:interpolation2}
    X^h_s = X^h_{t_n} - (s - t_n)\nabla U(X^h_s) + \sqrt{2}\,(W_s - W_{t_n}),\quad s \in [t_n,t_{n+1}).
\end{equation}

Next, we derive an explicit formula for the process $X^h_s$ in the form of It\^o's integral.
Before the detailed derivations, let us begin with some basic assumptions for this paper. It is easy to verify that a super-linearly growing potential such as $U(x) = |x|^4 - |x|^2$ in Ginzburg-Landau model satisfies all the following assumptions.

\begin{assumption}\label{ass0}
The Hessian matrix $\nabla^2 U$ satisfies:
\begin{enumerate}
\item There exists $m>0$ and $R>0$ such that
\begin{equation*}
    \nabla^2 U(x) \succeq mI,\quad \forall |x| \geq R.
\end{equation*}
\item $\nabla^2 U$ is continuous on $\mathbb{R}^d$. Consequently, for any $r>0$, there exists $M(r) > 0$ such that $\max_{x \in B(0,r)} |\nabla^2 U(x)| = M(r) < \infty$. In particular, we denote
\begin{equation*}
    M := \max_{x \in B(0,R)} |\nabla^2 U(x)|.
\end{equation*}
\end{enumerate}
Without loss of generality, we also assume that $U(x) \geq 0$ for all $x \in \mathbb{R}^d$, since $U$ is bounded from below under these two conditions above.
\end{assumption}
Note that we are not requiring a global convexity condition for the potential $U$, which is far too restrictive for many applications. Moreover, the above far-field confining condition is enough to derive the geometric ergodicity result for iLMC proved in Section \ref{sec:ergodicity} below.

In order to derive the relative entropy error bound in Section \ref{sec:error}, we require the following conditions for the potential $U$ and the initial distribution $\rho_0$.

\begin{assumption}\label{ass1}
The potential $U$ satisfies $U \in C^5(\mathbb{R}^d)$, and there exists $C>0$, $\ell \geq 1$ such that for $k = 1,2,3,4,5$,
\begin{equation*}
    |\nabla^k U(x)| \leq C(1 + |x|^\ell),\quad \forall x \in\mathbb{R}^d.
\end{equation*}
Moreover, for $k = 2,3,4$,
\begin{equation*}
    |\nabla^k U(x)| \leq C\left( |\nabla^{k-1} U(x)| + 1\right),\quad \forall x \in\mathbb{R}^d.
\end{equation*}
\end{assumption}

Here all $| \cdot|$ means the operator norm, i.e. $|\nabla^k U| = \sup \{\sum_{1 \leq i_1, i_2,\dots, i_k \leq d }\partial_{i_1 i_2\dots i_k} U \, v_{i_1} v_{i_2} \dots v_{i_k}: |v| = 1, v\in\mathbb{R}^d  \}$. In particular, it is Euclidean norm when $k=1$ and matrix 2-norm when $k=2$.

\begin{assumption}\label{ass2}
The numerical scheme (iLMC) $X^h$ and the true solution $X$ share the same initial distribution $\rho_0$, and for all $p \geq 1$, the $p$-th moment $\int_{\mathbb{R}^d} |x|^p \rho_0(dx)$ is finite. Moreover, there exist $C_0, C_1, C_2, C_3 >0$, $\gamma \in (0,1)$ and $\ell_1 \geq 3\ell + 2$ such that 
\begin{equation*}
    |\nabla \log \rho_0(x)| \leq C_0(1+|x|^{\ell_1}),\quad C_1\exp(-C_2 |x|^{\ell_1}) \leq \rho_0(x) \leq C_3\exp(-\gamma U(x)),\quad \forall x \in \mathbb{R}^d.
\end{equation*}
\end{assumption}





A direct consequence of the assumptions above is the following lemma, which will be frequently used in the subsequent analysis of this paper.

\begin{lemma}\label{lmm:matrixbound}
Suppose Assumption \ref{ass0} holds and recall the definition of $m$, $M$, $R$  therein. For all $h \in (0,1/(2M))$ and $x \in \mathbb{R}^d$, the matrix
\begin{equation*}
    I +  h \nabla^2 U(x)
\end{equation*}
is invertible, and 
\begin{equation}
    \left|\left(I + h  \nabla^2 U(x)\right)^{-1}\right| \leq 
    \left\{
    \begin{aligned}
        & e^{-\frac{1}{2}mh},\quad |x| \geq R,\\
        & e^{2Mh},\quad |x| < R.
    \end{aligned}
    \right.
\end{equation}
\end{lemma}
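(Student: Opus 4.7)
The plan is to exploit the symmetry of $\nabla^2 U(x)$ and reduce the operator norm bound on $(I+h\nabla^2 U(x))^{-1}$ to a scalar bound on the smallest eigenvalue of $I+h\nabla^2 U(x)$. I would split into the two cases $|x|\ge R$ and $|x|<R$ dictated by Assumption \ref{ass0}, handle them separately by a short calculus inequality, and then invoke the spectral theorem to conclude. The first observation I would record is that $m\le M$: by continuity of $\nabla^2 U$ at any point on the sphere $|x|=R$, the lower bound $\nabla^2 U\succeq mI$ forces the operator norm there to be $\ge m$, and this point lies in $\overline{B(0,R)}$, so $M\ge m$. In particular, $h<1/(2M)$ implies both $Mh<1/2$ and $mh<1/2$.

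For $|x|\ge R$, Assumption \ref{ass0} gives $I+h\nabla^2 U(x)\succeq (1+mh)I$, so this matrix is positive definite, hence invertible, and its inverse is symmetric with operator norm at most $1/(1+mh)$. It then suffices to prove $1/(1+mh)\le e^{-mh/2}$, or equivalently $e^{mh/2}(1+mh)^{-1}\le 1$. Writing $g(t)=e^{t/2}/(1+t)$, I would note $g(0)=1$, compute $g'(t)=e^{t/2}(t-1)/[2(1+t)^2]$, and conclude that $g$ decreases on $[0,1]$ and increases afterwards with $g(2)\approx e/3<1$. Since $mh\le Mh<1/2$, we land in the decreasing region and get $g(mh)\le g(0)=1$, as required.

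For $|x|<R$, the bound $|\nabla^2 U(x)|\le M$ places the eigenvalues of $\nabla^2 U(x)$ in $[-M,M]$, so those of $I+h\nabla^2 U(x)$ lie in $[1-Mh,1+Mh]$. Since $Mh<1/2$, the smallest is at least $1-Mh>1/2>0$, giving invertibility and the bound $|(I+h\nabla^2 U(x))^{-1}|\le 1/(1-Mh)$. I would finish by the elementary inequality $-\log(1-t)\le t/(1-t)$ valid for $t\in[0,1)$, which upon setting $t=Mh<1/2$ yields $-\log(1-Mh)\le Mh/(1-Mh)\le 2Mh$, hence $1/(1-Mh)\le e^{2Mh}$.

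There is no substantial obstacle here; the only thing to be a bit careful about is choosing the constants so that $h<1/(2M)$ is compatible with all the scalar inequalities used. In particular, the threshold $1/(2M)$ is tailored so that $Mh<1/2$ activates both the convergence of the geometric series underlying $-\log(1-Mh)\le 2Mh$ and the monotonicity range of $g(t)=e^{t/2}/(1+t)$, and the bound $m\le M$ makes the far-field case automatic. I would emphasize in the write-up that the lemma is what allows us later to treat $(I+h\nabla^2 U)^{-1}$ as a genuine contraction in the far field and as a mild expansion of size $O(Mh)$ in the bounded region, which is exactly what is used in the stability and coupling arguments of Sections \ref{sec:error} and \ref{sec:ergodicity}.
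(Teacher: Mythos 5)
Your proof is correct and follows essentially the same route as the paper: reduce the operator norm of the inverse to a scalar bound on the smallest eigenvalue of $I+h\nabla^2 U(x)$ using the case split from Assumption~\ref{ass0}, then close with an elementary inequality. The paper uses $(1+mh)^{-1}\le 1-\tfrac{1}{2}mh\le e^{-mh/2}$ and $(1-Mh)^{-1}\le 1+2Mh\le e^{2Mh}$ where you use the auxiliary function $g(t)=e^{t/2}/(1+t)$ and $-\log(1-t)\le t/(1-t)$; these are interchangeable, and your explicit note that $m\le M$ (which the paper leaves tacit) is a small but welcome clarification justifying $mh<1$.
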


\begin{proof}
For $x \in \mathbb{R}^d$, let $\underline{\lambda}(x) \in \mathbb{R}$ be the smallest eigenvalue of $\nabla^2 U$ (note that under the current assumption, the Hessian matrix $\nabla^2 U$ is symmetric so it only has real-valued eigenvalues). Recall $M = \sup_{|x| \leq R}|\nabla^2 U(x)|$. Clearly, $0 \leq M < \infty$ under the current assumption (without loss of generality we assume $M > 0$ throughout our analysis). So we have $\underline{\lambda}(x)$ lower bounded by $-M$ when $|x| \leq R$, and by $m$ when $|x| > R$. Consequently, when $h < 1 / M$, the matrix $I + h \nabla^2 U(x)$ is always invertible, and
\begin{equation*}
    \left|\left(I + h  \nabla^2 U(x)\right)^{-1}\right| \leq 
    \left\{
    \begin{aligned}
        & (1 + m h)^{-1},\quad |x| \geq R,\\
        & (1 - Mh)^{-1},\quad |x| < R.
    \end{aligned}
    \right.
\end{equation*}
Moreover, for $h < 1 / (2M)$, $(1 + mh)^{-1} \leq 1 - \frac{1}{2}mh \leq e^{-\frac{1}{2}mh}$ and $(1 - Mh)^{-1} \leq 1 + 2Mh \leq e^{2Mh}$.

\end{proof}

In order to see the well-definedness of iLMC \eqref{eq:ilmciteration} and its continuous-time interpolation \eqref{eq:interpolation2} more clearly, for $h>0$, we define the map $\Phi_h: \mathbb{R}^d \rightarrow \mathbb{R}^d$ by
\begin{equation}\label{eq:defphiearliest}
    \Phi_h(x) := x + h \nabla U(x).
\end{equation}
Then, once $\Phi^{-1}_h$ is well-defined for small $h$, we can rewrite \eqref{eq:ilmciteration} as
\begin{equation}\label{eq:ilmcrewrite}
    X_{t_{n+1}}^h = \Phi^{-1}_{h}\left(X^h_{t_n} + \sqrt{2}(W_{t_{n+1}} - W_{t_n}) \right),
\end{equation}
and \eqref{eq:interpolation2} as
\begin{equation}\label{eq:interpolationrewrite}
    X_s^h = \Phi^{-1}_{s-t_n}\left(X^h_{t_n} + \sqrt{2}(W_s - W_{t_n}) \right),\quad s \in[t_n,t_{n+1}).
\end{equation}
The introduction of the map $\Phi_h$ is also helpful during the proof of ergodicity in Section \ref{sec:ergodicity} below. We prove some crucial properties of $\Phi_h$ here. Note that the stability \eqref{eq:stability1} below also corresponds to the stability of the minimizing movement scheme \eqref{eq:jko}.

\begin{proposition}\label{prop:PhiLip}
Suppose Assumption \ref{ass0} holds with constants $m$, $M$ therein. Fix $h \in (0,1/(2M))$. Then $\Phi_h$ is a homeomorphism, and $x=\Phi_h^{-1}(x_0)$ is equivalent to
\begin{gather}\label{eq:jkoeuclidean}
x=\mathrm{argmin}_{x\in \R^d}\left\{U(x)+\frac{|x-x_0|^2}{2h}\right\}.
\end{gather}
Moreover, there exists $R' = (4 + 16M / m) R$ such that the inverse satisfies
    \begin{equation}\label{eq:lipphi}
        \left| \Phi^{-1}_h(x) - \Phi^{-1}_h(y)\right| \leq
        \left\{
        \begin{aligned}
            & e^{-\frac{m}{4}h} |x-y|,\quad |x-y| > R',\\
            & e^{2Mh} |x-y|, \quad |x-y| \leq R'.
        \end{aligned}
        \right.
    \end{equation}
    Consequently, $\Phi^{-1}_h$ has stability in the sense that $U(\Phi^{-1}_h(x))\le U(x)$ and
\begin{equation}\label{eq:stability1}
    |\Phi^{-1}_h(x)| \leq C \vee \left((1 - C'h)|x|\right),\quad\forall x \in \mathbb{R}^d.
\end{equation}
where $C$, $C'$ are independent of $h$.
\end{proposition}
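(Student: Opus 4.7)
\textbf{Proof plan for Proposition \ref{prop:PhiLip}.} The plan is to treat the three claims in order: the homeomorphism/variational characterization, the piecewise Lipschitz estimate, and finally the stability. I will lean on the Hessian control from Lemma \ref{lmm:matrixbound} throughout.

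For the first part, I would observe that $\Phi_h = \nabla \Psi_h$ where $\Psi_h(x) := \tfrac12 |x|^2 + h U(x)$. By Lemma \ref{lmm:matrixbound}, $\nabla^2 \Psi_h = I + h\nabla^2 U$ is uniformly positive definite for $h < 1/(2M)$, so $\Psi_h$ is strictly convex on $\mathbb{R}^d$; combined with the far-field bound $\nabla^2 \Psi_h \succeq (1+mh)I$ outside $B(0,R)$, one also gets coercivity $\Psi_h(x) \to \infty$ as $|x| \to \infty$. Hence for each $x_0$, the shifted strictly convex functional $x \mapsto \Psi_h(x) - \langle x_0, x\rangle$ (whose minimizer is characterized by $\nabla \Psi_h(x) = x_0$, i.e.\ $\Phi_h(x) = x_0$) attains a unique minimum, which proves both the bijectivity of $\Phi_h$ and the equivalence with \eqref{eq:jkoeuclidean}. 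Since $\Phi_h$ is $C^1$ with invertible Jacobian, the inverse is also continuous, giving the homeomorphism claim.

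For the Lipschitz bound, let $x_1 = \Phi_h^{-1}(y_1)$, $x_2 = \Phi_h^{-1}(y_2)$ and integrate $\nabla \Phi_h^{-1}(y) = (I + h\nabla^2 U(\Phi_h^{-1}(y)))^{-1}$ along the segment $\gamma(t) = y_2 + t(y_1 - y_2)$. By Lemma \ref{lmm:matrixbound} the integrand has norm $\leq e^{-mh/2}$ when $|\Phi_h^{-1}(\gamma(t))| \geq R$ and $\leq e^{2Mh}$ otherwise. The short-distance case $|y_1 - y_2| \leq R'$ is then immediate using the worst-case pointwise bound $e^{2Mh}$. For the long-distance case, the key step is to show that the pre-image $\Phi_h^{-1}(B(0,R))$ is contained in a ball of radius comparable to $R$: indeed, if $|\Phi_h^{-1}(y)| < R$ then $|y| \leq R + h(|\nabla U(0)| + MR) = R + hG$ with $G$ depending only on $M, R, \nabla U(0)$. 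Hence the segment $\gamma$ meets the "bad" set inside a chord of a ball of radius $R + hG$, so the parameter-measure of the bad set is at most $\delta := 2(R+hG)/|y_1-y_2|$. Combining the pointwise estimates yields
\begin{equation*}
|x_1 - x_2| \leq |y_1 - y_2|\bigl[(1-\delta)e^{-mh/2} + \delta e^{2Mh}\bigr] = |y_1-y_2|\, e^{-mh/2}\bigl[1 + \delta(e^{2Mh+mh/2}-1)\bigr].
\end{equation*}
A Taylor expansion of the bracket for small $h$ shows this is bounded by $e^{-mh/4}|y_1 - y_2|$ provided $\delta \lesssim m/(8M+2m)$, which after absorbing the $hG$ correction translates exactly to the stated threshold $R' = (4 + 16M/m)R$.

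The two stability claims are then quick. For $U(\Phi_h^{-1}(x)) \leq U(x)$, the JKO characterization \eqref{eq:jkoeuclidean} applied with the competitor $x$ itself gives $U(\Phi_h^{-1}(x)) + |\Phi_h^{-1}(x)-x|^2/(2h) \leq U(x)$, whence the bound. For the norm estimate, I would split on $|\Phi_h^{-1}(x)|$: when $|\Phi_h^{-1}(x)| \leq R$ the bound is trivial with $C = R$; when $|\Phi_h^{-1}(x)| > R$, writing $x = \Phi_h^{-1}(x) + h\nabla U(\Phi_h^{-1}(x))$, taking the inner product with the unit vector $\hat x_1 := \Phi_h^{-1}(x)/|\Phi_h^{-1}(x)|$, and integrating the far-field lower bound $\nabla^2 U \succeq mI$ along the radial segment from $R\hat x_1$ to $\Phi_h^{-1}(x)$ gives $|x| \geq (1+hm)|\Phi_h^{-1}(x)| - hG'$ for some constant $G'$ built from $\nabla U$ on $\overline{B(0,R)}$. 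Rearranging and distinguishing whether $|x| \geq 4G'/m$ (absorb the additive $hG'$ into the factor) or not (bound $|\Phi_h^{-1}(x)|$ by a constant) yields \eqref{eq:stability1} with $C' = m/4$. The main technical point, and the place I expect to be least transparent, is the chord-length argument giving $\delta$ in the Lipschitz estimate, because it requires carefully identifying the bad set $\Phi_h^{-1}(B(0,R))$ inside an actual ball so that convexity of the latter controls the entry/exit geometry of the line segment.
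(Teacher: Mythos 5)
Your proposal is correct and follows essentially the same route as the paper: integrate the Jacobian $\nabla\Phi_h^{-1}$ along a line segment, split into a far-field "good" region and a compact "bad" region, and bound the bad region's contribution by a chord-length argument. A few points of contrast are worth noting. For the homeomorphism you observe $\Phi_h = \nabla\Psi_h$ with $\Psi_h := \tfrac12|x|^2 + hU$ strictly convex and coercive, which gives existence and uniqueness simultaneously; the paper instead proves surjectivity via the JKO minimizer and injectivity by a separate monotonicity argument with the scalar function $\theta \mapsto \Phi_h(x_1+\theta(x_2-x_1))\cdot(x_2-x_1)$. Your version is arguably cleaner. For the Lipschitz bound, you are more careful than the paper on one subtle point: the region where only the weak bound $e^{2Mh}$ applies is $\{\lambda : |\Phi_h^{-1}(z_\lambda)| < R\}$, i.e.\ $z_\lambda \in \Phi_h(B(0,R))$, not $\{|z_\lambda| \le R\}$ as the paper writes. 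You correctly note that $\Phi_h(B(0,R)) \subseteq B(0, R + hG)$ with $G$ depending on $M$, $R$ and $\nabla U(0)$, and that the chord through a ball of radius $R+hG$ has parameter measure $\le 2(R+hG)/|y_1-y_2|$; for small $h$ this absorbs into the same threshold $R' = (4 + 16M/m)R$. Finally, the paper dispatches \eqref{eq:stability1} as a "direct consequence" of \eqref{eq:lipphi} (one applies \eqref{eq:lipphi} to the pair $(x,\Phi_h(0))$ and observes $\Phi_h^{-1}(\Phi_h(0))=0$); your radial integration of $\nabla^2 U \succeq mI$ outside $B(0,R)$ is a more hands-on alternative that gives explicit constants and is equally valid.
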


\begin{proof}
We first verify the well-definedness of the inverse map. The fact that $\Phi_h$ is onto is clear by the existence of the minimizer in \eqref{eq:jkoeuclidean}, due to the fact that $U$ is convex outside a compact set. Then, it suffices to show that $\Phi_h$ is injective. In fact, suppose there exists $x_1, x_2 \in \mathbb{R}^d$ ($x_1 \neq x_2$) such that $\Phi_h(x_1) = \Phi_h(x_2)$. Recall that $\nabla \Phi_h$ is globally positive definite by Lemma \ref{lmm:matrixbound}. Consider the function $g: [0,1] \rightarrow \mathbb{R}$ defined by
$$g(\theta) := \Phi_h(x_1 + \theta (x_2 - x_1)) \cdot (x_2 - x_1).$$
Since
\begin{equation*}
    g'(\theta) = (x_2 - x_1) \cdot \nabla \Phi_h(x_1 + \theta (x_2 - x_1)) \cdot (x_2 - x_1) > 0,
\end{equation*}
one has $g(0) < g(1)$. This is a contradiction with $\Phi_h(x_1) = \Phi_h(x_2)$, which implies $g(0) = g(1)$. Hence, the equivalence to \eqref{eq:jkoeuclidean} is then clear.

Next, we prove the Lipschitz property \eqref{eq:lipphi}.
By definition, denoting $z_\lambda := \lambda x + (1-\lambda) y$ ($\lambda \in [0,1]$), one has
\begin{equation*}
    \left| \Phi^{-1}_h(x) - \Phi^{-1}_h(y)\right| = \left|\int_0^1 \left(I + h\nabla^2 U\left(\Phi_h^{-1}(z_\lambda) \right) \right)^{-1} d\lambda \cdot (x-y) \right|.
\end{equation*}
Clearly, under Assumption \ref{ass0}, for $h < 1/(2M)$, $\left|\left(I + h\nabla^2 U\left(\Phi_h^{-1}(z_\lambda) \right) \right)^{-1} \right|$ is bounded by $1 - \frac{1}{2}m h$ when $|z_\lambda| > R$, and by $1 + 2Mh$ when $|z_\lambda| \leq R$. Consequently,  for all $x, y \in \mathbb{R}^d$,
\begin{equation*}
    \left|\Phi^{-1}_h(x) - \Phi^{-1}_h(y)\right| \leq e^{2Mh} |x-y|.
\end{equation*}
Moreover, when $|x-y| > R' = (4 + 16M / m) R$, the largest length of $\{\lambda \in [0,1]: |z_\lambda| \leq R \}$ is $2R$. Hence, when $|x-y| > R'$,
\begin{multline*}
     \left|\Phi^{-1}_h(x) - \Phi^{-1}_h(y)\right| \leq \frac{2R}{R'}(1 + 2Mh) + \left(1 - \frac{2R}{R'} \right)\left(1 - \frac{1}{2}m h\right)\\
     = 1 - \left(\frac{1}{2}m -(m + 4M)\frac{R}{R'}\right)h = 1-\frac{1}{4}m h.
\end{multline*}
The fact $U(\Phi^{-1}_h(x))\le U(x)$ is a direct consequence of the optimization scheme \eqref{eq:jkoeuclidean}, and \eqref{eq:stability1} is a direct consequence of \eqref{eq:lipphi}.

\end{proof}

Now, let us come back to the continuous-time interpolation \eqref{eq:interpolation2}. We have the following proposition.
\begin{proposition}\label{prop:explicitSDE}
Suppose Assumption \ref{ass0} holds and recall the definition of $m$, $M$, $R$  therein. For $h < 1/(2M)$,
\begin{enumerate}
\item The iLMC iteration \eqref{eq:ilmciteration} and the continuous-time interpolation \eqref{eq:interpolation2} are well-defined, and they share the same value at time grids $t_n$ for $n = 0,1,2,\dots$. Moreover, the $p$th moment of $X^h_s$ defined in \eqref{eq:interpolation2} has uniform bounds. Namely, for any $p \geq 2$, if
\begin{equation*}
    \mathbb{E}|X^h_0|^p < \infty,
\end{equation*}
then there exists a positive constant $C_p$ independent of $h$ and $t$ such that 
\begin{equation}\label{eq:Lpmomentbound}
    \sup_{t\geq 0}\mathbb{E}|X^h_t|^p \leq C_p < \infty.
\end{equation}
\item \eqref{eq:interpolation2} is an It\^o's process with the following explicit expression:
\begin{equation}\label{eq:explicitSDE}
    dX^h_s = b_h(s,X^h_s) ds + \sqrt{2}\sqrt{\Lambda_h(s,X^h_s)}\, dW,
\end{equation}
where
\begin{multline}\label{eq:bn}
    b_h(s, x) :=  -\left(I + (s - t_n)\nabla^2 U(x)\right)^{-1} \nabla U(x) \\
          -  (s - t_n) \left(I + (s - t_n)\nabla^2 U(x)\right)^{-1} \left( \nabla^3 U(x) :  \left(I + (s - t_n)\nabla^2 U(x)\right)^{-2}\right),
\end{multline}
and 
\begin{equation}\label{eq:Lambdan}
    \Lambda_h(s,x) := \left(I + (s - t_n)\nabla^2 U(x)\right)^{-2}.
\end{equation}
Consequently, its law $\rho^h_s$ satisfies the following Fokker-Planck equation
\begin{equation}\label{eq:numericalFP}
    \partial_s \rho^h_s = -\nabla \cdot (b_h(s,x) \rho^h_s) + \nabla^2 : (\Lambda_h(s,x) \rho^h_s).
\end{equation}
\end{enumerate}
\end{proposition}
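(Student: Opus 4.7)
For Part 1 of Proposition \ref{prop:explicitSDE}, the well-definedness of \eqref{eq:ilmciteration} and \eqref{eq:interpolation2} is immediate from Proposition \ref{prop:PhiLip}: for $h<1/(2M)$ the map $\Phi_h$ is a homeomorphism, so \eqref{eq:ilmcrewrite} and \eqref{eq:interpolationrewrite} furnish explicit representations, and agreement at the grid points is trivial. The uniform-in-time moment bound \eqref{eq:Lpmomentbound} is the less routine piece: the Lipschitz stability \eqref{eq:stability1} alone leaves an $O(1)$ additive constant after squaring, which would blow up when iterated against a contraction factor of order $h$. I would instead work directly with the implicit identity $X^h_{t_{n+1}}+h\nabla U(X^h_{t_{n+1}})=X^h_{t_n}+\sqrt{2}\,\Delta W_n$, take the inner product with $X^h_{t_{n+1}}$, and use the far-field dissipativity $x\cdot\nabla U(x)\ge \tfrac{m}{2}|x|^2-C$ (which extends globally from Assumption \ref{ass0} by integrating $\nabla^2 U$ along a ray) together with Young's inequality and independence of $\Delta W_n$ from $X^h_{t_n}$. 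This produces a one-step recurrence of the form $\mathbb{E}|X^h_{t_{n+1}}|^2\le(1-ch)\,\mathbb{E}|X^h_{t_n}|^2+Ch$, which iterates to an $h$- and $n$-independent bound. For general $p\ge 2$, multiplying by $|X^h_{t_{n+1}}|^{p-2}$ and controlling the Brownian moments via $\mathbb{E}|\Delta W_n|^p\le C_p h^{p/2}$ gives an analogous contractive recurrence; the bound on the interpolation for $s\in(t_n,t_{n+1})$ then follows from \eqref{eq:interpolationrewrite} combined with \eqref{eq:stability1}.

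For Part 2, I would introduce $F(\tau,y):=\Phi_\tau^{-1}(y)$ so that $X^h_s=F(s-t_n,Y_s)$ with $Y_s:=X^h_{t_n}+\sqrt{2}(W_s-W_{t_n})$ and $dY_s=\sqrt{2}\,dW_s$. Implicit differentiation of $F+\tau\nabla U(F)=y$ gives
\begin{equation*}
\partial_\tau F=-A\,\nabla U(F),\qquad \partial_y F=A,\qquad A:=\bigl(I+\tau\nabla^2 U(F)\bigr)^{-1}.
\end{equation*}
A second $y$-derivative, using the matrix identity $\partial A=-A\,(\partial A^{-1})\,A$ together with $\partial_{y_j}A^{-1}=\tau\,\nabla^3 U(F)\cdot\partial_{y_j}F$, produces
\begin{equation*}
\Delta_y F=-\tau\, A\bigl(\nabla^3 U(F):A^2\bigr),
\end{equation*}
where the collapse $\sum_i (Ae_i)(Ae_i)^T=AA^T=A^2$ uses the symmetry of $A$ (which inherits symmetry from $\nabla^2 U$). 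Applying It\^o's formula to $F(s-t_n,Y_s)$ and combining the three contributions $\partial_\tau F\,ds$, $\sqrt{2}\,\partial_y F\,dW$ and $\Delta_y F\,ds$ yields \eqref{eq:explicitSDE}--\eqref{eq:Lambdan}, and \eqref{eq:numericalFP} is then the standard forward Kolmogorov equation associated with this It\^o process.

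The step I expect to be the main obstacle is the computation of $\Delta_y F$: one must differentiate an implicitly defined matrix inverse through a nonlinear function of $F$, correctly track the third-order tensor $\nabla^3 U$, and then collapse the Laplacian sum so as to recognize the contraction $\nabla^3 U(F):A^2$ appearing in \eqref{eq:bn}. The bookkeeping is routine but easy to get wrong. A secondary technical point is that this manipulation requires $U\in C^3$, which is not part of Assumption \ref{ass0} but is supplied by Assumption \ref{ass1}, so the proposition should be read in combination with that smoothness (or, alternatively, one first mollifies $U$ and passes to the limit in \eqref{eq:numericalFP}).
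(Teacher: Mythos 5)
Your Part 2 derivation is correct and arrives at exactly the paper's formulas, but it takes a genuinely different route. The paper differentiates the implicit relation \eqref{eq:interpolation2} directly in $s$, collects the $\nabla^2 U\,dX^h_s$ term onto the left, inverts $I+(s-t_n)\nabla^2 U$, and then reads off the quadratic variation from the resulting martingale part; you instead write $X^h_s=F(s-t_n,Y_s)$ and apply It\^o's formula to the composition, computing $\partial_\tau F$, $\partial_y F$, and $\Delta_y F$ by implicit differentiation. I checked the index bookkeeping in $\Delta_y F^n=-\tau A_{nk}(\nabla^3 U)_{kml}(A^2)_{lm}$ and it is right; the two approaches are mathematically equivalent, and yours is arguably easier to audit because the chain rule structure is explicit. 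Your remark about needing $U\in C^3$ (Assumption \ref{ass1}, not merely \ref{ass0}) is a fair caveat that the paper leaves implicit.

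For Part 1, you propose a different route to the moment bound and your criticism of the stability-only argument is worth taking seriously: if one replaces the $\vee$ in \eqref{eq:stability1} by a sum, one indeed ends up with a recursion $a_{n+1}\le(1-ch)a_n+O(1)$ whose fixed point is $O(1/h)$, not uniform in $h$. The paper's Appendix A tries to avoid this by keeping the $\max$ structure through the expectation, passing from the pointwise bound $|X^h_{t_{n+1}}|^p\le K\vee e^{-mph/4}|\tilde{X}^h_{t_n}|^p$ to $\mathbb{E}|X^h_{t_{n+1}}|^p\le\max(K,\,e^{-mph/4}\mathbb{E}|\tilde X^h_{t_n}|^p)$; since $\max$ is convex that step is the wrong direction of Jensen and needs a more careful case split than is written, so your instinct to look for another argument is sound. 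Your own alternative, however, has two gaps. First, you say to take the inner product of the implicit identity with $X^h_{t_{n+1}}$ and then use independence of $\Delta W_n$ from $X^h_{t_n}$; but the noise term you generate is $X^h_{t_{n+1}}\cdot\Delta W_n$, and $X^h_{t_{n+1}}$ is \emph{not} independent of $\Delta W_n$. The clean version is to square both sides, $|X^h_{t_{n+1}}+h\nabla U(X^h_{t_{n+1}})|^2=|X^h_{t_n}+\sqrt{2}\Delta W_n|^2$, expand the right-hand side and use $\mathbb{E}[X^h_{t_n}\cdot\Delta W_n]=0$, and lower-bound the left via $x\cdot\nabla U(x)\ge\tfrac{m}{2}|x|^2-C$; this gives the desired $(1-ch)$-recurrence for $p=2$. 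Second, for $p>2$ the ``multiply by $|X^h_{t_{n+1}}|^{p-2}$'' plan does not close as stated: Young's inequality applied to $|X^h_{t_{n+1}}|^{p-1}|\Delta W_n|$ with the small parameter $\epsilon\sim h$ needed to absorb into the contraction produces a term of order $\epsilon^{-(p-1)}\mathbb{E}|\Delta W_n|^p\sim h^{1-p/2}$, which diverges. One needs either to work with $|X^h_{t_{n+1}}+h\nabla U(X^h_{t_{n+1}})|^p=|\tilde X^h_{t_n}|^p$ and a careful algebraic lower bound for the left side, or to follow the paper's two-stage decomposition (heat step then drift step), where the Brownian moment estimate \eqref{eq:heatstability} is done once for $\tilde X$ and the drift step is purely deterministic; the latter avoids the cross-term issue entirely.

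So: Part 2 is correct and a nice alternative route; Part 1 identifies a real subtlety but your replacement needs to be repaired (square the identity rather than pair with $X^h_{t_{n+1}}$, and handle $p>2$ without the naively scaled Young split).
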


\begin{proof}
The first part in Claim 1 above is obvious due to \eqref{eq:ilmcrewrite}, \eqref{eq:interpolationrewrite} and Proposition \ref{prop:PhiLip} above.

The moment bound \eqref{eq:Lpmomentbound} is relatively standard in literature, though most of which are in the sense of $L^2$ instead of $L^p$ ($p\geq 2$) here (see for instance \cite{mao2013strong, liu2023backward}). Our proof relies on a stability property \eqref{eq:stability1}.
We refer the reader to Appendix \ref{app:nablalogrho} for a complete proof of \eqref{eq:Lpmomentbound}. 

In what follows, we prove \eqref{eq:explicitSDE}.
Differentiating \eqref{eq:interpolation2} yields:
\[
    dX^h_s = -\nabla U(X^h_s)\, ds - (s - t_n)\left(\nabla^2 U(X^h_s) \cdot dX^h_s + \frac{1}{2} \nabla^3 U(X^h_s) : d[X^h_s, X^h_s] \right) + \sqrt{2}\, dW_s,
\]
where $d[X^h_s, X^h_s]$ is the quadratic variation and we will handle it later.
We can rewrite the expression more compactly as:
\begin{equation}\label{eq:dXearlier}
    \begin{aligned}
    dX^h_s &= -  (I + (s - t_n) \nabla^2 U(X^h_s)) ^{-1}\nabla U(X^h_s) \, ds \\
    &\quad -  \frac{s - t_n}{2}  (I + (s - t_n) \nabla^2 U(X^h_s))^{-1} \left( \nabla^3 U(X^h_s) : d[X^h_s, X^h_s] \right)\\
    &\quad + \sqrt{2}  (I + (s - t_n) \nabla^2 U(X^h_s))^{-1} dW_s.
    \end{aligned}
\end{equation}
Note that the martingale term is:
\[
    \sqrt{2} (I + (s - t_n) \nabla^2 U(X^h_s))^{-1} dW_s.
\]
Then the quadratic variation of $dX_s$ becomes:
\[
    d[X^h_s, X^h_s] = 2\left(I + (s - t_n)\nabla^2 U(X^h_s)\right)^{-2} ds.
\]
Substituting this expression for $d[X^h_s, X^h_s]$ back into \eqref{eq:dXearlier}, we obtain:
\[
\begin{aligned}
    dX^h_s &= -\left(I + (s - t_n)\nabla^2 U(X^h_s)\right)^{-1} \nabla U(X^h_s) \, ds \\
         &\quad -  (s - t_n) \left(I + (s - t_n)\nabla^2 U(X^h_s)\right)^{-1} \left( \nabla^3 U(X^h_s) :  \left(I + (s - t_n)\nabla^2 U(X^h_s)\right)^{-2}\right)  \, ds \\
         &\quad + \sqrt{2} \left(I + (s - t_n)\nabla^2 U(X^h_s)\right)^{-1} dW_s.
\end{aligned}
\]
Consequently, the Fokker-Planck equation \eqref{eq:overdampedFP} holds.

\end{proof}

Note that by Lemma \ref{lmm:matrixbound}, the matrix $$\left(I + (s - t_n)\nabla^2 U(x)\right)^{-1}$$ 
is always well-defined and uniformly bounded for all $x \in \mathbb{R}^d$. Then the derived SDE \eqref{eq:explicitSDE} is well-defined and facilitates the application of tools from stochastic analysis.

Moreover, based on the moment bound above, and combining with the polynomial bound assumed in Assumption \ref{ass1} above, it is easy to see that $\nabla^k U(X^h_t)$ has bounded $L^p$ norm ($k \leq 5$, $p \geq 1$), which will be repeatedly used in our analysis.

It is also well-known that $\rho_t$ associated with \eqref{eq:overdamped} satisfies a Fokker-Placnk equation given by
\begin{equation}\label{eq:overdampedFP}
    \partial_s \rho_s = \nabla \cdot (\nabla U \rho_s) + \Delta \rho_s.
\end{equation}
The two PDEs \eqref{eq:numericalFP} and \eqref{eq:overdampedFP} then enables us to estimate the relative entropy $\mathcal{H}(\rho^h_s \mid \rho_s)$ in the next section.

\section{Error Estimate in Relative Entropy}\label{sec:error}

In this section, we derive a relative entropy error bound for the iLMC discretization with second-order accuracy. The analysis is based on the continuous-time interpolation and the corresponding Fokker-Planck equation obtained in Section \ref{sec:interpolation}. Let us first recall the definition of the relative entropy $\mathcal{H}(\mu \| \nu)$ for two probability measures $\mu$, $\nu$ on $\mathbb{R}^d$:
\begin{equation*}
    \mathcal{H}(\mu | \nu):=\left\{\begin{array}{lr}
\int_E \log \frac{\mathrm{~d} \mu}{\mathrm{~d} \nu} \mathrm{~d} \mu, & \text { if } \mu \ll \nu, \\
\infty, & \text { otherwise }.
\end{array}\right.
\end{equation*}
In what follows, we prove our first main theorem, a relative entropy error bound for iLMC.

\begin{theorem}[Relative entropy error bound of iLMC]\label{thm:errormain}
Suppose Assumptions \ref{ass0}, \ref{ass1}, \ref{ass2} hold. Fix $T>0$. Let $\rho_t^h$, $\rho_t$ denote the laws of $X^h_t$, $X_t$ defined in \eqref{eq:overdamped}, \eqref{eq:interpolation}, respectively. Fix $\epsilon > 0$. There exists $\bar{h}>0$ and $C > 0$ independent of $h$ and $T$ ($C$ depends on $\epsilon$) such that for all $h \in (0,\bar{h})$, it holds that
\begin{equation}
    \sup_{0\leq s\leq T}\mathcal{H}\left(\rho^h_s \mid \rho_s \right) \leq CT^{3+\epsilon}h^2.
\end{equation}
\end{theorem}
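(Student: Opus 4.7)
The plan is to estimate $\mathcal{H}(\rho^h_s\mid \rho_s)$ by comparing the two Fokker-Planck equations \eqref{eq:numericalFP} and \eqref{eq:overdampedFP} through a Gr\"onwall-type argument. The main algebraic step is to rewrite the numerical Fokker-Planck equation so that its diffusion matches the standard Laplacian of the true equation, with the mismatch absorbed into the drift. Using the identity $\nabla^2:(\Lambda_h \rho^h) = \Delta \rho^h + \nabla\cdot((\nabla\cdot\Lambda_h)\rho^h) + \nabla\cdot((\Lambda_h - I)\rho^h\,\nabla\log \rho^h)$, one can recast \eqref{eq:numericalFP} as
$$\partial_s \rho^h_s = -\nabla\cdot(b_h^{\mathrm{eff}} \rho^h_s) + \Delta \rho^h_s,\qquad b_h^{\mathrm{eff}} := b_h - \nabla\cdot\Lambda_h - (\Lambda_h - I)\nabla\log \rho^h_s.$$
Now both densities evolve under Fokker-Planck equations with identical diffusion, so the standard entropy-dissipation computation (integrate $\partial_s(\rho^h\log(\rho^h/\rho))$ by parts twice and collect the quadratic term) produces
$$\frac{d}{ds}\mathcal{H}(\rho^h_s\mid \rho_s) = -\!\int\! \rho^h_s |\nabla\log(\rho^h_s/\rho_s)|^2\, dx + \!\int\! \rho^h_s (b_h^{\mathrm{eff}} + \nabla U)\cdot \nabla\log(\rho^h_s/\rho_s)\, dx.$$
Young's inequality absorbs half of the relative Fisher information into the dissipation, leaving
$$\frac{d}{ds}\mathcal{H}(\rho^h_s\mid \rho_s)\le -\tfrac12\!\int\! \rho^h_s |\nabla\log(\rho^h_s/\rho_s)|^2\, dx + \tfrac12\!\int\! \rho^h_s |b_h^{\mathrm{eff}} + \nabla U|^2\, dx.$$

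The next step is to show the last integral is $O(h^2)$. Fix $s\in[t_n,t_{n+1})$ and let $\tau=s-t_n\in[0,h)$. Taylor-expanding in $\tau$ and using Lemma \ref{lmm:matrixbound} yield $b_h + \nabla U = \tau(\nabla^2 U\,\nabla U - \nabla\Delta U) + O(\tau^2)$, $\nabla\cdot\Lambda_h = -2\tau\nabla\Delta U + O(\tau^2)$, and $\Lambda_h - I = -2\tau\nabla^2 U + O(\tau^2)$, so
$$b_h^{\mathrm{eff}} + \nabla U \;=\; \tau\bigl(\nabla^2 U\,\nabla U + \nabla\Delta U + 2\nabla^2 U\,\nabla\log \rho^h_s\bigr) + O(\tau^2)\cdot Q(x)$$
for a polynomial $Q$. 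Assumption \ref{ass1} bounds $|\nabla^k U|$ by polynomials in $|x|$; Proposition \ref{prop:explicitSDE} supplies finite polynomial moments of $\rho^h_s$ uniformly in time; and the pointwise estimate developed in Section \ref{sec:nablalogrho} provides $|\nabla\log \rho^h_s(x)|\le \mathcal{P}(s,x)$ with $\mathcal{P}$ polynomial in $|x|$ (and polynomial in $s$). Combining these ingredients via Cauchy-Schwarz yields $\int \rho^h_s |b_h^{\mathrm{eff}} + \nabla U|^2\, dx \le C(s)\, h^2$ with $C(s)$ polynomially bounded in $s$. Dropping the non-positive Fisher-information term and integrating on $[0,T]$, together with the fact that $\mathcal{H}(\rho^h_0\mid \rho_0)=0$ by Assumption \ref{ass2}, produces $\sup_{0\le s\le T}\mathcal{H}(\rho^h_s\mid \rho_s)\le CT^{3+\epsilon}h^2$, where the power $T^{3+\epsilon}$ absorbs both the outer time integration and the polynomial-in-$s$ growth in the Bernstein-type bound on $\nabla\log\rho^h$.

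The principal obstacle in this plan is precisely the appearance of $\nabla\log \rho^h$ inside $b_h^{\mathrm{eff}}$, which is unavoidable as soon as one forces both equations to share the same Laplacian diffusion. Without a pointwise control of $\nabla\log\rho^h$, the error integral cannot even be shown to be finite, let alone $O(h^2)$. In the non-Lipschitz drift regime such a bound is nontrivial; the proof relies on the machinery of Section \ref{sec:nablalogrho}, namely a Cole-Hopf transformation $u=\log(\rho^h/M_0)$ recasting \eqref{eq:numericalFP} into a Hamilton-Jacobi-type equation, a Bernstein-method gradient estimate giving $|\nabla u(x)|\le \mathcal{P}(x)(1+|u(x)|)$, and a separate tail analysis of $\rho^h$ producing a polynomial bound on $|u|$ that closes the loop. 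All other steps, once this gradient control is in hand, reduce to routine polynomial-moment bookkeeping.
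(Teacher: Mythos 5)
Your proposal is correct and follows essentially the same route as the paper's own proof. The rewriting of the numerical Fokker--Planck equation with a Laplacian diffusion and an ``effective drift'' $b_h^{\mathrm{eff}} = b_h - \nabla\cdot\Lambda_h - (\Lambda_h - I)\nabla\log\rho^h_s$ is algebraically identical to the paper's dissipation identity: regrouping the paper's four terms (the $b_h+\nabla U$ term, the $(\Lambda_h-I):(\nabla\rho^h\otimes\nabla\log(\rho^h/\rho))$ term, the $\nabla\cdot\Lambda_h$ term, and the relative Fisher information) gives exactly your two-term formula, and your Taylor expansion of $b_h^{\mathrm{eff}}+\nabla U$ in $\tau=s-t_n$ reproduces the bounds the paper proves for $I_1,I_2,I_3$. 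You also correctly pin down the crux: the term $(\Lambda_h-I)\nabla\log\rho^h_s$ forces a pointwise polynomial bound on $\nabla\log\rho^h$, since $\Lambda_h-I$ is only $O(\tau)$ with a polynomial-in-$x$ coefficient (not $O(\tau)$ uniformly), and this is supplied by the Cole--Hopf/Bernstein argument of Section~\ref{sec:nablalogrho}. The only cosmetic difference is that the paper splits off the $\nabla\log\rho^h$ factor via H\"older with exponent $2+\epsilon$ (hence the $\epsilon$-dependence and the $T^{3+\epsilon}$ rate), whereas your Cauchy--Schwarz step implicitly uses an $L^4$ bound on $\nabla\log\rho^h$; both are available once the pointwise estimate of Proposition~\ref{prop:nablalog} is in hand and both close the argument.
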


\begin{proof}

Recall the Fokker-Planck equations for $\rho^h_s$ and $\rho_s$ defined in \eqref{eq:overdampedFP}, \eqref{eq:numericalFP}, respectively. Then for $s \in [t_n,t_{n+1})$, direct calculations yield   
\begin{equation*}
\begin{aligned}
    \frac{d}{dt}\mathcal{H}(\rho^h_s | \rho_s) &=  \int_{\mathbb{R}^d}\rho^h_s 
	( b_h - (-\nabla U) ) \cdot \nabla\log \frac{ \rho^h_s}{\rho_s} d x 
	-  \int_{\mathbb{R}^d} (\Lambda_h - I) :(\nabla \rho^h_s\otimes \nabla\log \frac{ \rho^h_s}{\rho_s})  d x \\
&  \quad -  \int_{\mathbb{R}^d} \rho^h_s (\nabla \cdot \Lambda_h )
	\cdot \nabla \log \frac{ \rho^h_s}{\rho_s} d x 
- \int_{\mathbb{R}^d}\rho^h_s\left|\nabla\log \frac{ \rho^h_s}{\rho_s}\right|^{2} d x.
\end{aligned}
\end{equation*}
By Young's inequality,
\begin{equation*}
\begin{aligned}
    \frac{d}{dt}\mathcal{H}(\rho^h_s | \rho_s) &\leq \mathbb{E}|(b_h- (-\nabla U))(X^h_s)|^2 + \mathbb{E}|(\Lambda_h-I)\cdot\nabla \log \rho^h_s(X^h_s)|^2 + \mathbb{E}|(\nabla \cdot \Lambda_h)(X^h_s)|^2\\
    &=: I_1 + I_2 + I_3.
\end{aligned}
\end{equation*}
We aim to show that $I_i \lesssim h^2$ for $i=1,2,3$ up to time $T$. Consequently, the relative entropy error of iLMC is of second-order, i.e. 
\begin{equation*}
    \sup_{0\leq s \leq T}\mathcal{H}(\rho^h_s | \rho_s) \leq C(T) h^2.
\end{equation*}

\textbf{Estimate of $I_1$:}

By definition and Lemma \ref{lmm:matrixbound}, and note that the $p$-th moment for $\nabla^k U(X^h_s)$ ($k=1,2,3$) is uniformly bounded (recall the discussion after Proposition \ref{prop:explicitSDE}), for $h < \frac{1}{2M}$ one has
\begin{equation*}
\begin{aligned}
    &\quad \mathbb{E}\left|b_h(s,X_s^h) - \left(-\nabla U(X_s^h)\right)\right|^2\\
    &\leq 2\mathbb{E}\left|\nabla U(X_s^h) \left(I - \left(I + (s-t_n) \nabla^2 U(X_s^h) \right)^{-1}\right) \right|^2 + 16 h^2 \mathbb{E}|\nabla U^3(X_s^h)|^2\\
    &= 2\mathbb{E}\left|\nabla U(X_s^h) \left(I + (s-t_n) \nabla^2 U(X_s^h) \right)^{-1} (s-t_n) \nabla^2 U(X_s^h) \right|^2 + 16 h^2 \mathbb{E}|\nabla U^3(X_s^h)|^2\\
    &\leq Ch^2,
\end{aligned}
\end{equation*}
where the positive constant $C$ is independent of $s$, $T$ and $h$.

\textbf{Estimate of $I_2$:}

By definition of $\Lambda_h$ and Young's inequality, one has for $h < \frac{1}{2M}$ and any $\epsilon > 0$,
\begin{equation}\label{eq:I2Lp}
    \begin{aligned}
        &\quad\mathbb{E}\left|\left(\Lambda_h(s,X_s^h) - I\right) \cdot \nabla \log \rho^h_s(X_s^h)\right|^2 \\
        &=  (s-t_n)^2\mathbb{E}\left|\left(I + (s - t_n)\nabla^2 U(X^h_s)\right)^{-2} \nabla^2 U(X^h_s)  \left(2I +  (s-t_n)\nabla^2U(X^h_s)\right) \cdot \nabla \log \rho^h_s(X_s^h)\right|^2\\
        &\leq Ch^2\left( 1 + \mathbb{E}\left|\nabla \log \rho^h_s(X_s^h) \right|^{2 + \epsilon}\right).
    \end{aligned}
\end{equation}
Here we have used the polynomial bound for $\nabla^2 U$ and the moment bound for $X^h$, and the positive constant $C$ is independent of $s$, $T$ and $h$ but may depend on the positive constant $\epsilon$.


\textbf{Estimate of $I_3$}

Using the definition of $\Lambda_h$ again, for $h < \frac{1}{2M}$, one has
\begin{equation*}
    \mathbb{E}\left|\nabla \cdot \Lambda_h (s, X^h_s) \right|^2 \leq h^2\mathbb{E}\left[\left|\nabla^3 U(X^h_s) \right|^2\left|\left(I + (s-t_n) \nabla^2U(X^h_s) \right)^{-1}\right|^6\right] \leq Ch^2.
\end{equation*}
We have used the polynomial bound for $\nabla^3 U$ and the moment bound for $X^h$ , and the positive constant $C$ is independent of $s$, $T$ and $h$.

Finally, combining the estimates for $I_1$ -- $I_3$, one has
\begin{equation}\label{eq:combine123}
 \frac{d}{dt}\mathcal{H}(\rho^h_s | \rho_s) \leq Ch^2 \left(1 + \mathbb{E}\left|\nabla \log \rho^h_s(X_s^h) \right|^{2+\epsilon} \right).
\end{equation}
We prove in Proposition \ref{prop:nablalog} below that there exists $C>0$, $\ell_0 \geq 1$ that independent of $h$, $T$ such that
\begin{equation*}
    |\nabla \log \rho^h_t(x)| \leq CT\left(1 + |x|^{\ell_0} \right),\quad\forall x\in\mathbb{R}^d,\quad \forall t \in [0,T].
\end{equation*}
Consequently, using the moment bound for $X^h$, one knows that there exists $C>0$ that depends on $T$ such that
\begin{equation*}
    \mathbb{E}\left|\nabla \log \rho^h_s(X_s^h) \right|^{2 + \epsilon} \leq CT^{2 +\epsilon},\quad \forall s \in [0,T].
\end{equation*}
Combining this with \eqref{eq:combine123} gives the desired result.

\end{proof}

\begin{remark}
Note that the relative error bound above is valid only in a finite time horizon, and the main reason is that: the estimate for $\nabla \log \rho^h_s$ in Section \ref{sec:nablalogrho} is not uniform-in-time. It might be possible to improve this result to the long-time regime using some othe advanced tools, and we leave it as future work. Also, assuming some additional conditions such as the log-Sobolev inequality, the relative entropy estimate then implies a (finite-time) first-order convergence under Wasserstein distances due to classic transport inequalities \cite{otto2000generalization,talagrand1991new,bolley2005weighted,pinsker1963information}. On the other hand, provided with a Wasserstein contraction result which we will establish in Section \ref{sec:ergodicity} below, we are able to extend the convergence to a uniform-in-time one. We provide more details in Section \ref{sec:longtime} below.
\end{remark}

\begin{remark}
The $L^p$ ($p > 2$) bound required in \eqref{eq:I2Lp} during the proof cannot be reduced to a $L^2$ one (so that one only needs to study the Fisher information instead, which is much easier to control (see for instance \cite[Section 3]{li2022sharp}, \cite[Section 5]{mou2022improved})). The reason is that $\Lambda_h - I$ cannot be bounded pointwisely. We can see this from the second line of \eqref{eq:I2Lp}: when $|(s-t_n)\nabla^2 U(X^h_s)|$ goes to infinity, $|\Lambda_h - I|$ is approximately of order $h$ rather than desired $h^2$. Therefore, we can only apply Young's (or H\"older's) inequality, and then an $L^{2 + \epsilon}$ ($\epsilon > 0$) bound is required for $\nabla \log \rho^h_s(X_s^h)$.
\end{remark}

\section{Gradient estimate}\label{sec:nablalogrho}

In this section, we prove an $L^p$ ($p > 2$) bound for the random variable $\nabla \log \rho^h_s (X^h_s)$ ($s \in [0,T]$), which is used in the proof of Theorem \ref{thm:errormain}.
The construction and derivation below is not so novel in literature (see for instance \cite{bernstein1906generalisation,bernstein1910generalisation,li1986parabolic, du2024collision, feng2023quantitative, ju2025modified}), and much of our calculation follows \cite[Section 4]{du2024collision}.

Before the detailed estimation, let us first give a high-level overview of our technique. Basically, we prove a polynomial upper bound for $\nabla \log \rho^h$ (recall that $\rho^h$ solves the Fokker-Planck equation \ref{eq:numericalFP}) via Bernstein method for gradient estimate. Notably, below we successfully obtain the gradient estimate under the non-uniform-elliptic settings (recall that the diffusion coefficient in the SDE \eqref{eq:explicitSDE} is not uniformly bounded from below). See more discussions in Remark \ref{rmk:elliptic} below.

\paragraph{Technique overview} Our analysis is conducted following the three main steps:

\textbf{STEP 1.} Our proof begins with a Cole-Hopf transformation, which results in a Hamilton-Jacobi equation. In detail, letting 
\begin{equation}\label{eq:defu}
   u(t,x) := \log \rho^h_t(x) / M_0 \leq 0,
\end{equation}
where $M_0 := \exp(C_3' T)$ such that $\rho^h_t(x) \leq M_0$ for all $t \in [0,T]$ and $x \in \mathbb{R}^d$, see details in Lemma \ref{lmm:logrho} below.  It is then easy to see that $u$ satisfies a Hamilton-Jacobi equation
\begin{equation}\label{eq:HJ}
    \partial_t u=a:\left(\nabla^2 u+\nabla u \otimes \nabla u\right)+b \cdot \nabla u+c,
\end{equation}
where 
\begin{equation}\label{eq:abcovervoew}
    a:= \Lambda_h,\quad b:= -b_h+\nabla \cdot \Lambda_h,\quad c:= -\nabla \cdot b_h+\nabla^2: \Lambda_h.
\end{equation}
Note that under Assumption \ref{ass2}, for all $t>0$, the $\rho^h_t$ is positive everywhere due to the positivity of the diffusion matrix, so that one can take log. Also, $\rho^h_t$ is piecewise smooth in time, and $\partial_t\rho^h$ (or $\partial_t u$) could be discontinuous in time direction. This however will not affect the proof of the gradient estimates.

\textbf{STEP 2.} Then, we perform a gradient estimate using a Bernstein-type estimate, and obtain an inequality of the form 
\begin{equation}\label{eq:nablaucontroled}
    |\nabla u(x)| \leq \mathcal{P}(x)(1+|u(x)|),
\end{equation}
where $\mathcal{P}(x)$ is some polynomial of $x$. To obtain the inequality \eqref{eq:nablaucontroled}, we construct an auxiliary function
\begin{equation}\label{eq:gconstruction}
    g := \frac{|\nabla u|^2}{(1-u)^2}.
\end{equation}
Then, via a sequence of straightforward (but a bit tedious) calculation, one obtains an estimate of the form
$\mathcal{A}g \gtrsim g^2 - \mathcal{P}(x)$, where $\mathcal{A}$ is a nonnegative operator of the form $\mathcal{A} g = -\partial_t g+ a^{ij} \partial_{ij} g + \bar{b}^i \partial_i g$ (Einstein summation convention is used here and in the rest of the paper), and the matrix $(a^{ij})$ is positive definite everywhere (but not necessarily uniformly). Then after a maximum principle type argument, one obtains \eqref{eq:nablaucontroled}. In particular, in the case where $g$ attains its maximum in the interior of the parabolic domain, at the maximum point, one has $0 \geq \mathcal{A}g \gtrsim g^2 - \mathcal{P}(x)$, so $g$ is still bounded by a polynomial. 

Throughout the analysis in this section, the construction of the auxiliary function $g$ in \eqref{eq:gconstruction} as well as the lower bound for $\mathcal{A}g$ above is crucial. We summarize the result in the following proposition, whose detailed proof is postponed to the end of this section:
\begin{proposition}\label{prop:Ag}
Suppose Assumption \ref{ass0} holds. Recall the functions $u$, $g$, $a$, $b$, $c$ defined in \eqref{eq:HJ} -- \eqref{eq:gconstruction} above. Define the nonnegative operator $\mathcal{A}$ by
\begin{equation}
    \mathcal{A}(g) := a^{ij} \partial_{ij} g - \partial_t g + b^i \partial_i g - 3a^{ij} \frac{\partial_j u}{1-u}\partial_i g + 2a^{ij} \partial_j u \partial_i g.
\end{equation}
Then, 
\begin{equation}
    \mathcal{A}(g) \geq \frac{\lambda}{2} \frac{\left|\nabla^2 u\right|^2}{(1-u)^2}+\frac{\lambda}{2}(1-u) g^2  - M_1 (1-u)  (g+1),\quad \forall x\in \mathbb{R}^d,
\end{equation}
where $\lambda(x) > 0$ is the smallest eigenvalue of $a(x)$, and
\begin{equation}
    M_1 := 2|c| + 2\lambda^{-1}|\nabla a|^2 + 2|\nabla b| +  2|\nabla c|.
\end{equation}
\end{proposition}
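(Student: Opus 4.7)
The plan is to carry out a direct Bernstein-type computation: differentiate the Hamilton--Jacobi equation \eqref{eq:HJ} once in $x_k$, differentiate $g$ twice in space and once in time, and then assemble $\mathcal{A}(g)$. The corrective terms $-3a^{ij}\frac{\partial_j u}{1-u}\partial_i g+2a^{ij}\partial_j u\,\partial_i g$ in the definition of $\mathcal{A}$ are precisely the ones designed to cancel out the first-order-in-$\nabla u$ couplings that arise when one collects $a^{ij}\partial_{ij}g$, $-\partial_t g$ and $b^i\partial_i g$, so that what remains is a manifestly positive leading part plus lower-order errors.

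First I would compute
\begin{equation*}
\partial_i g=\frac{2\,\partial_k u\,\partial_{ik}u}{(1-u)^2}+\frac{2g\,\partial_i u}{1-u},
\end{equation*}
and the analogous (longer) formula for $\partial_{ij}g$, whose crucial contribution is the term $2\,\partial_{ik}u\,\partial_{jk}u/(1-u)^2$; contracted with $a^{ij}$ this yields the positive piece $\tfrac{2}{(1-u)^2}\,a^{ij}\partial_{ik}u\,\partial_{jk}u\geq \tfrac{2\lambda}{(1-u)^2}|\nabla^2 u|^2$, which is the source of the $\tfrac{\lambda}{2}|\nabla^2 u|^2/(1-u)^2$ on the right-hand side. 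Next, differentiating \eqref{eq:HJ} in $x_k$ gives
\begin{equation*}
\partial_{tk}u=(\partial_k a^{ij})\partial_{ij}u+a^{ij}\partial_{ijk}u+(\partial_k a^{ij})\partial_i u\,\partial_j u+2\,a^{ij}\partial_{ik}u\,\partial_j u+(\partial_k b^i)\partial_i u+b^i\partial_{ik}u+\partial_k c,
\end{equation*}
which I multiply by $2\,\partial_k u/(1-u)^2$ and sum over $k$ to form $\partial_t g=\frac{2\,\partial_k u\,\partial_{tk}u}{(1-u)^2}+\frac{2g}{1-u}\partial_t u$, also substituting \eqref{eq:HJ} itself to handle $\partial_t u$. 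The third-order terms $a^{ij}\partial_{ijk}u\,\partial_k u$ cancel with their counterparts inside $a^{ij}\partial_{ij}g$, and the $b^i\partial_{ik}u\,\partial_k u$ terms cancel with part of $b^i\partial_i g$; the remaining $\partial_k u\,\partial_{ik}u/(1-u)^3$ couplings are removed by the two correction terms in $\mathcal{A}(g)$. Meanwhile the term $2\,a^{ij}\partial_{ik}u\,\partial_j u\,\partial_k u$ combines with the nonlinearity of the HJ equation to produce $\tfrac{2}{(1-u)^2}\,a^{ij}\partial_i u\,\partial_j u\cdot(1-u)g\geq \lambda(1-u)g^2$, which gives the second positive piece.

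Finally I would estimate the remainder, which consists of products of $\nabla u$ or $\nabla^2 u$ with $\nabla a,\nabla b,\nabla c,c$, by Young's inequality. For example,
\begin{equation*}
\Bigl|\tfrac{2}{(1-u)^2}(\partial_k a^{ij})\partial_{ij}u\,\partial_k u\Bigr|\leq \tfrac{\lambda}{2}\,\tfrac{|\nabla^2 u|^2}{(1-u)^2}+\tfrac{2|\nabla a|^2}{\lambda}\,\tfrac{|\nabla u|^2}{(1-u)^2},
\end{equation*}
and analogous bounds hold for the $\nabla b$, $\nabla c$, $c$ contributions. Half of the Hessian term is absorbed this way, leaving $\tfrac{\lambda}{2}|\nabla^2 u|^2/(1-u)^2$; the rest are bounded by $\bigl(2\lambda^{-1}|\nabla a|^2+2|\nabla b|+2|\nabla c|+2|c|\bigr)(1-u)(g+1)=M_1(1-u)(g+1)$, using $|\nabla u|^2/(1-u)^2=g$ and $g\leq g+1$. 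The main obstacle is purely algebraic bookkeeping: one must verify that all first-order-in-$\nabla u$ cross terms indeed cancel via the two correction terms in $\mathcal{A}$, that exactly half of $|\nabla^2 u|^2$ is consumed by the Young estimates so that the surviving coefficient is $\tfrac{\lambda}{2}$, and that the factor $(1-u)$ (rather than $(1-u)^{-1}$) appears uniformly in the remainder, which relies on $u\leq 0$ and hence $1-u\geq 1$.
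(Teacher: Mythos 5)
Your overall strategy---differentiate the Hamilton--Jacobi equation, assemble $\mathcal{A}(g)$, isolate the positive leading parts, and absorb the remainder by Young's inequality---is the same as the paper's, and the plan would go through once the bookkeeping is done correctly. However, two of your claims about which terms cancel are not accurate, and as written they would leave you stuck.

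First, you assert that ``the remaining $\partial_k u\,\partial_{ik}u/(1-u)^3$ couplings are removed by the two correction terms in $\mathcal{A}(g)$.'' This is not what happens. If you carry out the computation, $a^{ij}\partial_{ij}g$ contributes $\tfrac{8}{(1-u)^3}a^{ij}\partial_ju\,\partial_ku\,\partial_{ik}u$, $-\partial_t g$ contributes $-\tfrac{4}{(1-u)^2}a^{ij}\partial_ju\,\partial_ku\,\partial_{ik}u$, and the corrections $(2-\tfrac{3}{1-u})\,a^{ij}\partial_ju\,\partial_ig$ contribute $\tfrac{4}{(1-u)^2}a^{ij}\partial_ju\,\partial_ku\,\partial_{ik}u - \tfrac{6}{(1-u)^3}a^{ij}\partial_ju\,\partial_ku\,\partial_{ik}u$. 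The $(1-u)^{-2}$ pieces cancel, but the $(1-u)^{-3}$ pieces do not: a residual $\tfrac{2}{(1-u)^3}a^{ij}\partial_ju\,\partial_ku\,\partial_{ik}u$ survives. The paper handles this by observing a sum-of-squares structure,
\begin{equation*}
a^{ij}\Bigl(\tfrac{\partial_{ik}u}{1-u}+\tfrac{\partial_iu\,\partial_ku}{(1-u)^2}\Bigr)\Bigl(\tfrac{\partial_{jk}u}{1-u}+\tfrac{\partial_ju\,\partial_ku}{(1-u)^2}\Bigr)\geq 0,
\end{equation*}
which simultaneously eats the surviving cross term and consumes one of the two copies of $a^{ij}\partial_{ik}u\,\partial_{jk}u/(1-u)^2$. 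This is why the coefficient on the Hessian term drops from $2\lambda$ to $\lambda$ before you even get to Young, and then Young halves it again to $\lambda/2$; your plan attributes the whole drop to Young and so is off by a factor of $2$. (An equivalent fix is Cauchy--Schwarz on the residual cross term against the Hessian square and the quartic gradient term, also at cost of factor $1/2$.)

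Second, your account of the source of $\lambda(1-u)g^2$ is misattributed. Substituting the HJ nonlinearity $a^{ij}\partial_iu\,\partial_ju$ into $-\tfrac{2g}{1-u}\partial_tu$ contributes \emph{negatively}, $-\tfrac{2g}{1-u}a^{ij}\partial_iu\,\partial_ju$. The positive quartic-in-$\nabla u$ piece, $\tfrac{2g}{1-u}a^{ij}\partial_iu\,\partial_ju = 2a^{ij}\tfrac{(\partial_ku)^2\partial_iu\,\partial_ju}{(1-u)^3}$, instead emerges after the correction term $2a^{ij}\partial_ju\,\partial_ig$ contributes $+\tfrac{4g}{1-u}a^{ij}\partial_iu\,\partial_ju$ and the various $(1-u)^{-2}$-weighted pieces cancel; it is this surviving quartic term (after giving part of it away to the sum of squares and using $1-u\geq 1$) that produces $\lambda(1-u)g^2$. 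These corrections to the bookkeeping are essential: if you take your cancellation claims literally, the residual Hessian--gradient cross term has no home and the argument fails.
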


\begin{remark}\label{rmk:elliptic}
As we can see from the proof of Proposition \ref{prop:nablalog} below, a completely rigorous analysis involves a cut-off function that vanishes outside some neighborhood of a fixed point $x^{*}$. Moreover, to our knowledge, most existing similar results requires the operator $\mathcal{A}$ above is uniformly elliptic (i.e. the smallest eigenvalue of $(a^{ij})$ has a uniform lower bound in $\mathbb{R}^d$). However, in our setting, $(a^{ij})$ is only globally positive definite and the eigenvalues do not have a uniform lower bound. This in fact would not bring too much influence to our derivation -- we only make use of two facts regarding $(a^{ij})$:
\begin{enumerate}
    \item the inverse of the smallest eigenvalue of $(a^{ij})$ (which is in fact $(1 + (s-t_n)|\nabla^2 U(x)|)^2$) has a polynomial upper bound;
    \item $(a^{ij})$ is positive definite everywhere (consequently, when $g$ attains maximum in the interior of the parabolic domain, one has $a^{ij}\partial_{ij} g \leq 0$).
\end{enumerate}
\end{remark}

\textbf{STEP 3.} Finally, by proving a tail estimate of the numerical density of the form \eqref{eq:tailrhoh},
one knows that $|u|$ has a polynomial upper bound. Combining this with the estimate \eqref{eq:nablaucontroled}, one obtains that $|\nabla u|$ has a polynomial upper bound.

In what follows, we give the details of our derivation. We first need the following lemma describing the tail of $\rho^h_t$:
\begin{lemma}\label{lmm:logrho}
Suppose Assumptions \ref{ass0}, \ref{ass1}, \ref{ass2} hold. Fix $T>0$. Recall the constants $C_1, C_2, C_3, \gamma, \ell_1$ in Assumption \ref{ass2}. Then there exist $C_1',C_3' > 0$ such that
\begin{equation}\label{eq:tailrhoh}
    \exp(-C_1'T)\exp(-C_2 |x|^{\ell_1}) \leq \rho^h_t(x) \leq \exp(C_3'T)\exp(-\gamma U(x)),\quad \forall t \in [0,T].
\end{equation}
\end{lemma}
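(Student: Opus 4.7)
The plan is to establish both bounds by applying the parabolic comparison principle to the Fokker--Planck equation \eqref{eq:numericalFP} with explicit super- and sub-solutions of the prescribed tail shapes. Since the coefficients $b_h$ and $\Lambda_h$ are smooth on each slab $[t_n, t_{n+1}) \times \mathbb{R}^d$ and $\rho^h$ is continuous across the grids $t_n$, it suffices to verify the sub-/super-solution property on each such slab and glue.

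For the upper bound, I would take $v(t, x) := C_3 \exp(C_3' t) \exp(-\gamma U(x))$; by Assumption \ref{ass2} one has $v(0, \cdot) \geq \rho_0$. The supersolution condition $\partial_t v \geq Lv$ (with $L$ the Fokker--Planck operator on the right-hand side of \eqref{eq:numericalFP}) reduces to $\sup_x Lv/v \leq C_3' < \infty$. Using $\nabla \log v = -\gamma \nabla U$, a direct computation gives
\begin{equation*}
\frac{Lv}{v} = -\nabla \cdot b_h + \gamma\, b_h \cdot \nabla U + \gamma^2 \Lambda_h : (\nabla U \otimes \nabla U) - \gamma \Lambda_h : \nabla^2 U - 2\gamma (\nabla \cdot \Lambda_h) \cdot \nabla U + \nabla^2 : \Lambda_h.
\end{equation*}
Setting $A := I + (t - t_n) \nabla^2 U$ and substituting the explicit $b_h$ from \eqref{eq:bn}, the pair $\gamma b_h \cdot \nabla U + \gamma^2 \Lambda_h : (\nabla U \otimes \nabla U)$ collapses to $-\gamma \nabla U^T A^{-2}[(1-\gamma) I + (t-t_n) \nabla^2 U] \nabla U + O(h \cdot \mathrm{poly}(|x|))$. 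Since $\gamma \in (0,1)$, Lemma \ref{lmm:matrixbound} yields $(1-\gamma)I + (t-t_n) \nabla^2 U \succeq \tfrac{1-\gamma}{2} I$ globally for sufficiently small $h$, so this contribution is bounded above by $-\tfrac{\gamma(1-\gamma)}{2} |A^{-1} \nabla U|^2$, which tends to $-\infty$ as $|x| \to \infty$. By the super-linear growth of $U$ and the self-bounding property $|\nabla^k U| \lesssim |\nabla U|$ of Assumption \ref{ass1}, the remaining ``bad'' terms are bounded by polynomial factors strictly dominated by this negative leading term in both the moderate regime $h \|\nabla^2 U\| \lesssim 1$ and the large regime $h \|\nabla^2 U\| \gg 1$. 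Hence $\sup Lv/v =: C_3' < \infty$, and comparison gives $\rho^h \leq v$.

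For the lower bound, I would analogously take $w(t, x) := C_1 \exp(-C_1' t) \exp(-C_2 |x|^{\ell_1})$ with $w(0, \cdot) \leq \rho_0$ by Assumption \ref{ass2}; the subsolution condition reduces to $\sup_x(-Lw/w) \leq C_1'$. Setting $u := \log w$ with $\nabla u = -C_2 \ell_1 |x|^{\ell_1 - 2} x$, a parallel computation gives
\begin{equation*}
-\frac{Lw}{w} = \nabla \cdot b_h + b_h \cdot \nabla u - \Lambda_h : \nabla^2 u - \Lambda_h : (\nabla u \otimes \nabla u) - 2(\nabla \cdot \Lambda_h) \cdot \nabla u - \nabla^2 : \Lambda_h.
\end{equation*}
The leading negative term is $-\Lambda_h : (\nabla u \otimes \nabla u) = -C_2^2 \ell_1^2 |x|^{2\ell_1 - 4} x^T \Lambda_h x$, of magnitude $\gtrsim |x|^{2\ell_1 - 2}/(1 + h \|\nabla^2 U\|)^2$. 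The potentially positive cross term $b_h \cdot \nabla u$ is bounded by $|b_h| \cdot |\nabla u| \lesssim |x|^{\ell + \ell_1 - 1}/(1 + h\|\nabla^2 U\|)$. The requirement $\ell_1 \geq 3\ell + 2$ in Assumption \ref{ass2} ensures $2\ell_1 - 2 > \ell + \ell_1 - 1$, so the negative leading term strictly dominates the positive cross term in both regimes, and similarly dominates the other lower-degree polynomial terms. Thus $\sup(-Lw/w) =: C_1' < \infty$, and comparison yields $\rho^h \geq w$.

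The main obstacle is the careful degree-counting that establishes pointwise dominance of the negative leading term in $Lv/v$ (and in $-Lw/w$) over the polynomial lower-order corrections from derivatives of $\Lambda_h$ and $b_h$, in both the moderate and large $|x|$ regimes; this rests on the combination of super-linear growth of $U$, the bound $\gamma < 1$, and the inequality $\ell_1 \geq 3\ell + 2$ in Assumption \ref{ass2}. A secondary technicality is the justification of the parabolic comparison principle on $\mathbb{R}^d$ given that the diffusion $\Lambda_h$ is pointwise positive definite but not uniformly elliptic; this is handled by truncating to a large ball $B_R$, applying the standard bounded-domain max principle on the resulting cylinder, and sending $R \to \infty$ using the rapid decay of both $\rho^h$ and the explicit super-/sub-solutions at infinity.
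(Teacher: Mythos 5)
Your proposal is correct and takes essentially the same route as the paper: the paper writes $q = \rho^h/\tilde q$ with $\tilde q$ of the prescribed tail shape and verifies a sign condition on a function $F$ (equivalent to your $\pm L\tilde q/\tilde q \lessgtr C$) before invoking the maximum principle, which is just an algebraic repackaging of your direct super/sub-solution comparison. You also correctly identify the two key mechanisms the paper uses — the cancellation producing the leading negative term (controlled by $\gamma \in (0,1)$ for the upper bound and by $\ell_1 \geq 3\ell + 2$ for the lower bound) and the justification of the comparison principle on $\mathbb{R}^d$ despite the degenerate ellipticity — so the proposal matches the paper's proof in substance.
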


The proof of Lemma \ref{lmm:logrho} is relatively straightforward due to the maximal principle. We provide a detailed proof of \ref{lmm:logrho} in Appendix \ref{app:nablalogrho}.

Based on Lemma \ref{lmm:logrho} and Proposition \ref{prop:Ag} above, we are then able to derive a polynomial upper bound of $\nabla \log \rho^h_t(x)$. As mentioned in the technique overview, the key estimate is Proposition \ref{prop:Ag}. We will move the tedious proof for Proposition \ref{prop:nablalog} to Appendix \ref{app:nablalogrho} while give a detailed derivation for Proposition \ref{prop:Ag} at the end of this section.

\begin{proposition}\label{prop:nablalog}
Suppose Assumptions \ref{ass0}, \ref{ass1}, \ref{ass2} hold.  Then for any fixed $T>0$, there exist $C>0$  , $\ell'_0, \ell_0'' \geq 1$ independent of $t$, $x$, $h$ and $T$ such that
\begin{equation}\label{eq:nablalogandlog}
    |\nabla \log \rho^h_t(x)| \leq C\left(1 + |x|^{\ell'_0} \right)(1+ T + |\log \rho^h_t(x)|),\quad\forall x\in\mathbb{R}^d,\quad \forall t \geq 0,
\end{equation}
and consequently, 
\begin{equation}\label{eq:nablalogrhopoly}
    |\nabla \log \rho^h_t(x)| \leq CT\left(1 + |x|^{\ell_0''} \right),\quad \forall t \in [0,T].
\end{equation}
\end{proposition}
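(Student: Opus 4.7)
The plan is to apply the Bernstein-type estimate of Proposition \ref{prop:Ag} via a parabolic maximum-principle argument on localized cylinders, and then convert the resulting bound on $g$ into the desired estimate for $\nabla\log\rho^h_t$ using Lemma \ref{lmm:logrho}.

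First I would fix a reference point $x^*\in\mathbb{R}^d$ and work on the parabolic cylinder $Q_R:=[0,T]\times B(x^*,R)$. Introducing a smooth spatial cut-off $\eta\in C_c^\infty(B(x^*,R))$ with $\eta\equiv 1$ on $B(x^*,R/2)$ and $|\nabla^k\eta|\le CR^{-k}$ for $k=1,2$, I would study the auxiliary quantity $G:=\eta^4 g$ on $Q_R$. Since $G$ vanishes on the lateral boundary, its maximum over $Q_R$ is attained either at $t=0$, where Assumption \ref{ass2} gives a polynomial-in-$x$ (and polynomial-in-$T$) bound on $g(0,\cdot)$ through the controls on $|\nabla\log\rho_0|$ and $|\log\rho_0|$, or at some interior space-time point $(\bar t,\bar x)$ at which the parabolic max-principle inequalities $\partial_t G\ge 0$, $\nabla G=0$, and $\nabla^2 G\preceq 0$ force $\mathcal{A}(G)(\bar t,\bar x)\le 0$. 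Expanding $\mathcal{A}(\eta^4 g)$ using the linearity of $\mathcal{A}$ in $g$, and applying Young's inequality to absorb the cross terms involving $\nabla\eta$ and $\nabla^2\eta$ into the dominant $\tfrac{\lambda}{2}(1-u)g^2$ contribution from Proposition \ref{prop:Ag}, I arrive at an inequality of the shape
\begin{equation*}
    \tfrac{\lambda}{4}(1-u)\,\eta^4 g^2 \;\le\; \bigl(M_1(1-u)(g+1) + C R^{-2}\lambda^{-1} g + \text{lower order}\bigr)\eta^4 \qquad \text{at }(\bar t,\bar x).
\end{equation*}

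Next, by Assumption \ref{ass1} and Lemma \ref{lmm:matrixbound}, both $\lambda^{-1}$ and $M_1$ admit polynomial upper bounds in $|x|$; solving the resulting quadratic-in-$g$ inequality and letting $R\to\infty$ then gives a pointwise polynomial bound $g(t,x^*)\le\mathcal{P}(|x^*|)$ valid for all $t\in[0,T]$, with a $T$-dependence that is tracked through the initial-data contribution and the shift $M_0=\exp(C_3'T)$ in the definition of $u$. Unwinding $g=|\nabla u|^2/(1-u)^2$ and using $|u(t,x)|\le|\log\rho^h_t(x)|+C_3'T$ directly yields \eqref{eq:nablalogandlog}. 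The consequence \eqref{eq:nablalogrhopoly} then follows immediately by substituting the tail estimate $|\log\rho^h_t(x)|\le C_1'T+C_2|x|^{\ell_1}+\log M_0$ from Lemma \ref{lmm:logrho} (combined with the polynomial growth of $U$ from Assumption \ref{ass1}).

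The main obstacle is the non-uniform ellipticity of $\mathcal{A}$: the smallest eigenvalue $\lambda$ of $a=\Lambda_h$ decays polynomially as $|x|\to\infty$ by Lemma \ref{lmm:matrixbound}, so classical Bernstein estimates under uniform ellipticity do not apply verbatim. Following Remark \ref{rmk:elliptic}, however, only two facts about $a$ are genuinely needed: (i) $\lambda^{-1}$ has a polynomial upper bound, which is precisely what keeps $M_1$ and the output polynomial $\mathcal{P}$ polynomial after the division $g^2\le \lambda^{-1}(\cdots)$; and (ii) $\lambda>0$ pointwise, which is what preserves the sign $a^{ij}\partial_{ij}G\le 0$ at the interior maximum. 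A secondary technical issue is the piecewise-in-time smoothness of $\rho^h$ across the grid points $t_n$; this is harmless because $\rho^h$ and $\nabla\rho^h$ remain continuous across these points, so the Bernstein inequality can be applied slab-by-slab on each $[t_n,t_{n+1}]$ and the estimate propagated via continuity at $t_{n+1}$.
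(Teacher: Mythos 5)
Your overall strategy -- Bernstein estimate via Proposition \ref{prop:Ag}, cut-off, parabolic maximum principle, then unwinding $g$ and using Lemma \ref{lmm:logrho} -- is the same as the paper's. However, there is a genuine gap in the limit $R\to\infty$.

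The step that fails is ``solving the resulting quadratic-in-$g$ inequality and letting $R\to\infty$''. At the interior maximum point $(\bar t,\bar x)\in Q_R$, the quadratic inequality gives a bound on $g(\bar t,\bar x)$ that is polynomial in $|\bar x|$ (through $M_1$, $M_2$, and $\underline{\lambda}^{-1}$, all of which grow like powers of $|\bar x|$ by Lemma \ref{lmm:abcpoly} and Lemma \ref{lmm:matrixbound}). You only know $\bar x\in B(x^*,R)$, so the resulting bound on $g(t,x^*)\le G(\bar t,\bar x)$ is a polynomial in $|x^*|+R$, not in $|x^*|$; the same problem occurs in the $t=0$ case since $\sup_{B(x^*,R)}|\nabla\log\rho_0|$ also grows with $R$. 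Letting $R\to\infty$ therefore makes the bound \emph{worse}, not better, and the $CR^{-2}\lambda^{-1}g$ term does not vanish either, since $\lambda^{-1}(\bar x)$ can grow like $R^{2\ell}$, overwhelming the $R^{-2}$. The paper avoids this by fixing the radius to $1$: the cut-off $\psi(\cdot-x^*)$ is supported in $B(x^*,1)$, so any maximizer $\bar x$ satisfies $|\bar x|\le|x^*|+1$, and every polynomial-in-$|\bar x|$ bound converts uniformly into a polynomial-in-$|x^*|$ bound. You should simply keep $R$ fixed (e.g.\ $R=1$) and run the same argument; no limiting process is needed or helpful.

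A minor secondary point: you replace the paper's special cut-off $\tilde\psi$ satisfying $|\tilde\psi'|+|\tilde\psi''|\le C_\delta\tilde\psi^\delta$ with a standard $C_c^\infty$ cut-off raised to the fourth power, $\eta^4$. This is a legitimate alternative device: at an interior maximum, $\nabla(\eta^4 g)=0$ gives $\eta\nabla g=-4(\nabla\eta)g$, and the resulting cross terms $\eta^3|\nabla\eta|(1-u)g^{3/2}$ can be absorbed by Young's inequality into $\epsilon\,\eta^4(1-u)g^2 + C_\epsilon|\nabla\eta|^4(1-u)$, with no negative power of $\eta$ surviving. So the $\eta^4$ trick does the same job as the paper's $\tilde\psi^\delta$ property. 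The remaining piecewise-in-time remark is also fine and consistent with the paper's treatment, since only $\partial_t g$ appears in $\mathcal{A}$ and the slab-by-slab argument goes through.
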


\begin{proof}[Proof of Proposition \ref{prop:Ag}]
After straightforward calculations, one has  
\begin{equation*}
    \begin{aligned}
\mathcal{A}(g) = & 2 a^{i j} \frac{\partial_{ik}u \partial_{jk}u}{(1-u)^2}+2 a^{i j} \frac{\partial_{ik}u \partial_{k}u \partial_{j}u}{(1-u)^3}+2 a^{i j} \frac{\partial_{k}u \partial_{k}u \partial_{i}u \partial_{j}u}{(1-u)^3} \\
& -2 c \frac{\partial_{k}u \partial_{k}u}{(1-u)^3}-2 \frac{\partial_{k}u\left(\partial_{k}a^{i j}\left(\partial_{ij}u+\partial_{i}u \partial_{j}u\right)+\partial_{k}b^i \partial_{i}u+\partial_{k}c\right)}{(1-u)^2}.
\end{aligned}
\end{equation*}
Note that by Lemma \ref{lmm:matrixbound}, $a$ is positive definite. So one has
\begin{equation*}
    a^{i j} \frac{\partial_{ik}u \partial_{jk}u}{(1-u)^2}+2 a^{i j} \frac{\partial_{ik}u \partial_{k}u \partial_{j}u}{(1-u)^3}+a^{i j} \frac{\partial_{k}u\partial_{k}u \partial_{k}u \partial_{i}u \partial_{j}u}{(1-u)^4} \geq 0.
\end{equation*}
Then,
\begin{equation*}
\begin{aligned}
\mathcal{A} (g) & \geq a^{i j} \frac{\partial_{ik}u \partial_{jk}u}{(1-u)^2}+a^{i j} \frac{\partial_{k}u \partial_{k}u \partial_{i}u \partial_{j}u}{(1-u)^3}\\
&\quad-\frac{2 c}{(1-u)} g-2 \frac{\partial_{k}u\left(\partial_{k}a^{i j}\left(\partial_{ij}u+\partial_{i}u \partial_{j}u\right)+\partial_{k}b^i \partial_{i}u+\partial_{k}c\right)}{(1-u)^2} \\
& \geq \lambda \frac{\left|\nabla^2 u\right|^2}{(1-u)^2}+\lambda(1-u) g^2-2|c| g\\
&\quad - 2|\nabla a| \frac{g^{1/2}|\nabla^2 u|}{1-u} - 2|\nabla a| (1-u)g^{3/2}- 2|\nabla b|g - 2|\nabla c| \frac{g^{1/2}}{1-u}\\
&\geq \frac{\lambda}{2} \frac{\left|\nabla^2 u\right|^2}{(1-u)^2}+\frac{\lambda}{2}(1-u) g^2-2|c| g\\
&\quad - 2\lambda^{-1}|\nabla a|^2 g - 2\lambda^{-1}|\nabla a|^2 (1-u)g- 2|\nabla b|g - 2|\nabla c| (g+1)\\
& \geq \frac{\lambda}{2} \frac{\left|\nabla^2 u\right|^2}{(1-u)^2}+\frac{\lambda}{2}(1-u) g^2  - M_1 (1-u)  (g+1),
\end{aligned}
\end{equation*}
where the function $M$ is defined by
\begin{equation*}
    M_1 := 2|c| + 2\lambda^{-1}|\nabla a|^2 + 2|\nabla b| +  2|\nabla c|.
\end{equation*}
    
\end{proof}

\section{Geometric ergodicity via a reflection-type coupling method}\label{sec:ergodicity}

In this section, We prove the second main result: geometric ergodicity of iLMC, using a reflection-type continuous-discrete coupling method with a Lyapunov function defined by
\begin{equation}\label{eq:Lyapunovdef}
    f(r) = \int_0^r e^{-C_f (r' \wedge R_f)} dr', \quad r \geq 0,
\end{equation}
and the associated Kantorovich-Rubinstein distance $W_f$ defined by
\begin{equation}\label{eq:Wf}
    W_{f}(\mu, \nu):=\inf _{\gamma \in \Pi(\mu, \nu)} \int_{\mathbb{R}^{d} \times \mathbb{R}^{d}}f(|x-y|) d \gamma.
\end{equation}
Recall that the Wasserstein-1 distance is defined by
\begin{equation}\label{eq:W1}
    W_{1}(\mu, \nu):=\inf _{\gamma \in \Pi(\mu, \nu)} \int_{\mathbb{R}^{d} \times \mathbb{R}^{d}}|x-y| d \gamma.
\end{equation}
Clearly, since $  e^{-c_f R_f} r\leq f(r) \leq r$ for all $r \geq 0$, contraction under $W_f$ is equivalent with contraction under $W_1$ to some extent.

\begin{theorem}[Wasserstein contraction of iLMC]\label{thm:contraction}
Suppose Assumption \ref{ass0} holds. Denote $\mu^h_n$, $\nu^h_n$ be the law of iLMC with step size $h$ at $n$-th iteration, with initial distributions $\mu_0$, $\nu_0$, respectively. Denote $R' = (4 + 16M/m)R$. Then for fixed small $h>0$, one can choose $R_f = 3R'$ and $c_f > C(R', M)$ such that
\begin{equation}
    W_f (\mu^h_n, \nu^h_n) \leq e^{-Cnh} W_f(\mu_0, \nu_0),
\end{equation}
where the positive constant $C$ is independent of $h$ and $n$.
Consequently, 
\begin{equation}
    W_1(\mu^h_n, \nu^h_n) \leq C_0e^{-Cnh} W_1(\mu_0, \nu_0),\quad C_0 := e^{c_f R_f}.
\end{equation}
\end{theorem}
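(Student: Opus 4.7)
The plan is to decompose each iLMC iteration into a pure-diffusion step followed by the deterministic map $\Phi^{-1}_h$, coupling the two chains by continuous-time reflection coupling of the Brownian increments. More precisely, given $(X^h_{t_n}, Y^h_{t_n})$, I couple $W^X, W^Y$ on $[t_n, t_{n+1}]$ by the reflection driven by the normalized gap of the pre-$\Phi^{-1}_h$ Brownian paths $\bar{X}_s := X^h_{t_n} + \sqrt{2}(W^X_s - W^X_{t_n})$ and $\bar{Y}_s := Y^h_{t_n} + \sqrt{2}(W^Y_s - W^Y_{t_n})$ (up to coalescence), and set $X^h_{t_{n+1}} := \Phi^{-1}_h(\bar{X}_{t_{n+1}})$, $Y^h_{t_{n+1}} := \Phi^{-1}_h(\bar{Y}_{t_{n+1}})$ using \eqref{eq:ilmcrewrite}. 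The target is a one-step estimate
\begin{equation*}
    \mathbb{E}\bigl[f(|X^h_{t_{n+1}} - Y^h_{t_{n+1}}|) \mid \mathcal{F}_{t_n}\bigr] \leq e^{-Ch} f(|X^h_{t_n} - Y^h_{t_n}|),
\end{equation*}
which iterates in $n$ and yields the $W_f$ contraction after taking infimum over couplings of $(\mu_0, \nu_0)$; the $W_1$ bound with $C_0 = e^{c_f R_f}$ then follows from the sandwich $e^{-c_f R_f} r \leq f(r) \leq r$.

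For the diffusion portion, writing $Z_s := \bar{X}_s - \bar{Y}_s$, a standard reflection-coupling computation gives $d|Z_s| = 2\sqrt{2}\, dB_s$ prior to coalescence for a one-dimensional Brownian motion $B$, and It\^o's formula yields
\begin{equation*}
    df(|Z_s|) = 2\sqrt{2}\, f'(|Z_s|)\, dB_s + 4 f''(|Z_s|)\, ds.
\end{equation*}
Since $f'(r) = e^{-c_f(r \wedge R_f)}$ is decreasing, $f''(r) = -c_f f'(r) \leq -c_f e^{-c_f R_f}$ on $[0, R_f]$ and $f'' \equiv 0$ beyond, so taking expectations gives
\begin{equation*}
    \mathbb{E}\bigl[f(|Z_{t_{n+1}}|) \mid \mathcal{F}_{t_n}\bigr] \leq f(|Z_{t_n}|) - 4 c_f e^{-c_f R_f} \int_{t_n}^{t_{n+1}} \mathbb{P}\bigl(|Z_s| \leq R_f \mid \mathcal{F}_{t_n}\bigr) ds.
\end{equation*}
Routine Gaussian tail estimates show that for $|Z_{t_n}|$ not too close to $R_f$ and $h$ small, the integrand is bounded below by $1/2$ throughout the interval, producing a strict drift of size $\Theta(c_f e^{-c_f R_f} h) f'(|Z_{t_n}|)$, which can be converted into a decrement proportional to $f(|Z_{t_n}|)$ using $f \leq r$ and $f \geq e^{-c_f R_f} r$.

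For the deterministic portion, Proposition \ref{prop:PhiLip} applied to $(\bar{X}_{t_{n+1}}, \bar{Y}_{t_{n+1}})$ gives a post-$\Phi^{-1}_h$ distance equal to $\alpha |Z_{t_{n+1}}|$ with $\alpha \leq e^{-mh/4}$ if $|Z_{t_{n+1}}| > R'$ and $\alpha \leq e^{2Mh}$ otherwise. Writing $f(\alpha r) - f(r) = -\int_{\alpha r}^{r} f'(s)\, ds$ and using monotonicity of $f'$ translates this into an additional decrement of order $e^{-c_f R_f} m h \, |Z_{t_{n+1}}|$ in the far-field case, and an increment of at most $2 M h \, |Z_{t_{n+1}}| \leq 2 M h R'$ in the near-field case. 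The main obstacle is then calibrating $R_f$ and $c_f$ so that the diffusion-induced contraction uniformly dominates the worst-case near-field expansion on $\{|Z_{t_n}| \leq R_f\}$: the buffer $R_f = 3R'$ guarantees that when $|Z_{t_n}| \leq R_f$ the full path $|Z_s|$ remains in $[0, R_f]$ with probability at least $1/2$ over an interval of length $h$, so the diffusion drift remains active, and choosing $c_f$ of the right magnitude relative to $R', M, m$ makes that drift outweigh the possible near-field expansion. For $|Z_{t_n}| > R_f$, Gaussian concentration forces $|Z_{t_{n+1}}| > R'$ with overwhelming probability when $h$ is small, so the deterministic step alone contracts. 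Once the one-step inequality is verified in all three regimes, iteration in $n$ and taking the infimum over couplings of $(\mu_0, \nu_0)$ closes the proof.
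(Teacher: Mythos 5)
Your overall architecture matches the paper exactly: split each iLMC step into a Brownian diffusion followed by the deterministic map $\Phi^{-1}_h$, synchronously reflect the Brownian parts, use the concave Lyapunov function $f$, and argue case by case in the size of the gap. The serious problem is in the quantitative comparison in the intermediate regime, where your diffusion and drift estimates are both too crude and, crucially, compound in the wrong direction.

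In the diffusion step you lower-bound $|f''(r)|\geq c_f e^{-c_f R_f}$ for all $r\in[0,R_f]$, so your decrement is of size $c_f e^{-c_f R_f}h$. In the drift step you upper-bound the near-field increment by $(\alpha-1)r f'(r)\leq 2Mh R'$, i.e.\ you replace $f'(r)=e^{-c_f r}$ by $1$. The decrement thus carries a factor $e^{-c_f R_f}$ that the increment does not. When the conditional gap $|Z_{t_n}|$ sits at an intermediate scale (say order $R'$), the increment is roughly $M R' e^{-c_f|Z|}h$ while the decrement is roughly $c_f e^{-c_f R_f}h$; their ratio is $\sim c_f e^{-c_f(R_f-|Z|)}/(MR')$, which tends to zero as $c_f\to\infty$ because $R_f-|Z|>0$. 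Since elsewhere in the argument one is forced to take $c_f$ large — the paper ends up requiring $c_f>12 M R_f = 24 M R'$, which is exactly the constraint that emerges on $\Omega_2$ — the contraction you claim is uniformly available in fact fails for the very range of $c_f$ the theorem needs, unless $MR'^2$ is tiny. The sentence "choosing $c_f$ of the right magnitude... makes that drift outweigh the possible near-field expansion" is the unjustified step; and the parenthetical "$\Theta(c_f e^{-c_f R_f}h)\, f'(|Z_{t_n}|)$" does not follow from the inequality you derived (which has no $f'(|Z_{t_n}|)$ factor), so it cannot be used to repair this.

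The paper's fix is to avoid ever reducing $e^{-c_f|\tilde Z_t|}$ to $e^{-c_f R_f}$. It partitions $\Omega_2$ into shells $\Omega_{2,m}=\{\mu_m\leq|Z_{t_n}|<\mu_{m+1}\}$ of width $h^{1/2-\delta}$ (matching the sub-Gaussian fluctuation scale of Lemma~\ref{lmm:subgaussian}), so that on each shell $e^{-c_f|\tilde Z_t|}$ is comparable to $e^{-c_f\mu_m}$ up to a constant. The diffusion decrement and the drift increment then both carry the same factor $e^{-c_f\mu_m}$, which cancels in the comparison and leaves the clean condition $c_f>12 M R_f$. This shell-level matching is the missing idea. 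Separately, you also wave past the small-gap regime where coalescence within a step has non-negligible probability; the paper's $\Omega_1=\{|Z^h_{t_n}|<h^{1/2-\delta}\}$ case and Lemma~\ref{lmm:technical1} are devoted precisely to that, and a complete proof would need an analogous treatment.
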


A direct corollary of the Wasserstein contraction result in Theorem \ref{thm:contraction} is the following geometric ergodicity of iLMC:

\begin{corollary}[Geometric ergodicity of iLMC]
Suppose Assumption \ref{ass0} holds. Denote $\rho^h_n$ be the law of iLMC with step size $h$ at $n$-th iteration, with initial distribution $\rho_0$. Then for small $h$, the iLMC as a discrete-time Markov chain has a unique invariant measure $\pi^h$, and
\begin{equation}\label{W1ergodicity}
    W_1 (\rho^h_n, \pi^h) \leq C_0 e^{-Cnh} W_1(\rho_0, \pi^h).
\end{equation}
\end{corollary}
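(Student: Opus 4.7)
The plan is to derive all three claims -- existence, uniqueness of $\pi^h$, and the convergence rate -- as direct consequences of the Wasserstein contraction in Theorem \ref{thm:contraction}, combined with the uniform moment bound \eqref{eq:Lpmomentbound} from Proposition \ref{prop:explicitSDE}. Let $P_h$ denote the one-step transition operator of iLMC, so that Theorem \ref{thm:contraction} reads $W_1(P_h^n \mu_0, P_h^n \nu_0) \leq C_0 e^{-Cnh} W_1(\mu_0, \nu_0)$ for all initial laws $\mu_0,\nu_0$.

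First I would establish existence of an invariant measure. Start the chain from $\delta_0$ (or any deterministic point) and denote by $\rho^h_n$ the law at step $n$. By the Markov/semigroup property,
\begin{equation*}
    W_1(\rho^h_{n+k}, \rho^h_n) = W_1\bigl(P_h^n \rho^h_k, P_h^n \rho^h_0\bigr) \leq C_0 e^{-Cnh}\, W_1(\rho^h_k, \rho^h_0).
\end{equation*}
The moment bound \eqref{eq:Lpmomentbound} with $p=1$ (applied at the time grids $t_k$) gives $W_1(\rho^h_k, \rho^h_0) \leq \mathbb{E}|X^h_{t_k}| + \mathbb{E}|X^h_0| \leq 2C_1$ uniformly in $k$. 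Hence $\{\rho^h_n\}$ is Cauchy in $(\mathcal{P}_1(\mathbb{R}^d), W_1)$, which is a complete metric space; denote its limit by $\pi^h$, which lies in $\mathcal{P}_1$ because the uniform first-moment bound passes to the $W_1$-limit.

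Next I would verify invariance and uniqueness. The contraction implies $P_h$ is $W_1$-continuous on $\mathcal{P}_1$, so
\begin{equation*}
    P_h \pi^h = \lim_{n\to\infty} P_h \rho^h_n = \lim_{n\to\infty} \rho^h_{n+1} = \pi^h.
\end{equation*}
For uniqueness, if $\pi_1, \pi_2$ are any two invariant measures in $\mathcal{P}_1$, then $W_1(\pi_1,\pi_2) = W_1(P_h^n \pi_1, P_h^n \pi_2) \leq C_0 e^{-Cnh} W_1(\pi_1,\pi_2)$; letting $n\to\infty$ forces $W_1(\pi_1,\pi_2)=0$, hence $\pi_1 = \pi_2 = \pi^h$.

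Finally, the rate \eqref{W1ergodicity} is obtained by applying Theorem \ref{thm:contraction} to the pair of initial distributions $(\rho_0, \pi^h)$ and using the invariance of $\pi^h$:
\begin{equation*}
    W_1(\rho^h_n, \pi^h) = W_1\bigl(P_h^n \rho_0,\, P_h^n \pi^h\bigr) \leq C_0 e^{-Cnh}\, W_1(\rho_0, \pi^h).
\end{equation*}
I do not anticipate any real obstacle here: once Theorem \ref{thm:contraction} is in hand, the only nontrivial input is ensuring that $W_1(\rho^h_k, \rho^h_0)$ does not grow with $k$, which is exactly what the uniform $L^p$ moment bound \eqref{eq:Lpmomentbound} supplies. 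The argument is the standard Banach fixed-point-type deduction of ergodicity from a strict contraction on a complete metric space of probability measures, and it never requires revisiting the more delicate PDE or coupling machinery used in the earlier sections.
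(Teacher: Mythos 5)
Your proof is correct, and it establishes the same three claims (existence, uniqueness, rate) with the same ultimate input, Theorem~\ref{thm:contraction}. The route differs in one small but notable way from the paper's. The paper picks $n_0$ so that the $n_0$-step kernel $P_{n_0}$ is a strict contraction, invokes Banach's fixed-point theorem to get a fixed point $\pi_*$ of $P_{n_0}$, and then must pass from a fixed point of $P_{n_0}$ to a genuine invariant measure of the one-step chain via the averaging trick $\pi^h := \tfrac{1}{n_0}\sum_{n=0}^{n_0-1}\pi_* P_n$. You instead show the orbit $\{\rho^h_n\}$ of a single fixed initial law is Cauchy in $(\mathcal{P}_1, W_1)$, using the uniform moment bound~\eqref{eq:Lpmomentbound} to keep $W_1(\rho^h_k,\rho^h_0)$ bounded, and then identify the $W_1$-limit directly as the invariant measure by $W_1$-continuity of $P_h$. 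This avoids the averaging step entirely and makes explicit the point that the paper leaves implicit: one must know the kernel preserves $\mathcal{P}_1$ (or that the orbit stays in it) before any fixed-point argument on $(\mathcal{P}_1,W_1)$ can be run, and this is exactly where the moment bound earns its keep. Both approaches are essentially Banach-contraction arguments and give identical conclusions; yours is a touch more self-contained.
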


\begin{proof}
Similar proofs can also be found in related literature such as \cite{li2024geometric, li2025ergodicity}. By Theorem \ref{thm:contraction}, there exists $n_0 \in \mathbb{N}_{+}$  such that 
\begin{equation}
    W_1(\mu^h_{n_0}, \nu^h_{n_0}) \leq \frac{1}{2} W_1(\mu_0,\nu_0).
\end{equation}
Denote the corresponding transition kernel for $n$th iteration by $P_{n}$. Then, $\mu \mapsto \mu P_{n_0}$ is contractive. By Banach's contraction mapping theorem, there exists a fixed point $\pi_*$ satisfying
\begin{equation}
    \pi_* = \pi_* P_{n_0}.
\end{equation}
Then, by Markov property, $\pi^h := \frac{1}{n_0}\sum_{n=0}^{n_0-1} \pi_* P_n$ is the invariant measure of the iLMC iteration. 
Moreover, $\pi^h = \pi^h P_{n_0}$ for any invariant measure so that the invariant measure is unique by the contraction property of $P_{n_0}$. Besides, $\pi^h=\pi_*$. 

Letting $\nu^h_{nh} = \pi^h$ in Theorem \ref{thm:contraction}, \eqref{W1ergodicity} then follows.
\end{proof}

Next, we prove Theorem \ref{thm:contraction} via a reflection-type coupling method.
For any $h>0$, recall that we define the map $\Phi_h : \mathbb{R}^d \rightarrow \mathbb{R}^d$ by
\begin{equation}\label{eq:defphi}
    \Phi_h(x) = x + h \nabla U(x),\quad \forall x \in \mathbb{R}^d,
\end{equation}
and under Assumptions \ref{ass0}, for small $h$, $\Phi_h$ is proved to be a homeomorphism in Proposition \ref{prop:PhiLip} above. Also recall that we can rewrite iLMC \eqref{eq:ilmciteration} as
\begin{equation}\label{eq:iLMCPhi}
    X^h_{t_{n+1}} = \Phi^{-1}_h\left(X^h_{t_n} + \sqrt{2}\,\Delta W_n\right).
\end{equation}
In each iteration, the iLMC \eqref{eq:iLMCPhi} is in fact performed in two steps:
\begin{equation}
    X^h_{t_n} \xrightarrow{\text{diffusion step}} \tilde{X}^h_{t_{n+1}} \left(= X^h_{t_n} + \sqrt{2}\,\Delta W_n\right) \xrightarrow{\text{drift step}} X^h_{t_{n+1}} \left(= \Phi_h^{-1}(\tilde{X}^h_{t_{n+1}}) \right).
\end{equation}
The drift step is deterministic, and its evolution can be estimated using properties of the map $\Phi_h$ and its inverse (see Section \ref{sec:drift} below). For the diffusion part, our analysis is mainly based on a continuous-time reflection coupling (see Section \ref{sec:diffusion} below).

\subsection{Evolution of the drift step}\label{sec:drift}
Firstly, recall that under Assumption \ref{ass0}, we prove in Proposition \ref{prop:PhiLip} above that the following Lipschitz property of $\Phi_h^{-1}$ holds:
\begin{equation}
        \left| \Phi^{-1}_h(x) - \Phi^{-1}_h(y)\right| \leq
        \left\{
        \begin{aligned}
            & e^{-\frac{m}{4}h} |x-y|,\quad |x-y| > R',\\
            & e^{2Mh} |x-y|, \quad |x-y| \leq R'.
        \end{aligned}
        \right.
    \end{equation}

Furthermore, considering the Lyapunov function $f(\cdot)$, we are able to prove the following:

\begin{lemma}\label{lmm:fPhi}
    Suppose Assumptions \ref{ass0}  hold. Recall the function $f(\cdot)$ defined in \eqref{eq:Lyapunovdef}. Then for then for $R'$ in Proposition \ref{prop:PhiLip}, when $h \in (0,1/(2M)),$ it holds
    \begin{equation}\label{eq:fPhiclaim0}
        f\left(\left| \Phi^{-1}_h(x) - \Phi^{-1}_h(y)\right|\right) \leq
        \left\{
        \begin{aligned}
            & f(|x-y|) - \frac{m}{4} h|x-y|f'(|x-y|),\quad |x-y| > R',\\
            & f(|x-y|) + 2M h|x-y|f'(|x-y|), \quad |x-y| \leq R'.
        \end{aligned}
        \right.
    \end{equation}
    Consequently, for $C_1' = e^{-c_fR_f}\frac{m}{4}$ and $C_2' = e^{c_fR_f}2M$,
    \begin{equation}\label{eq:fPhiclaim1}
        f\left(\left| \Phi^{-1}_h(x) - \Phi^{-1}_h(y)\right|\right) \leq
        \left\{
        \begin{aligned}
            & e^{-C_1'h} f(|x-y|),\quad |x-y| > R',\\
            & e^{C_2'h} f(|x-y|), \quad |x-y| \leq R'.
        \end{aligned}
        \right.
    \end{equation}
\end{lemma}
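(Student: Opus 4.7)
The plan is to exploit the concavity of the Lyapunov function $f$ together with sharper, \emph{linear} Lipschitz bounds for $\Phi_h^{-1}$ that are implicit in the proof of Proposition \ref{prop:PhiLip}. First I would record that $f'(r) = e^{-c_f(r\wedge R_f)}$ is nonnegative and non-increasing, so $f$ is non-decreasing and concave; in particular the tangent inequality $f(b)\le f(a)+f'(a)(b-a)$ holds for all $a,b\ge 0$. Writing $a:=|x-y|$ and $b:=|\Phi_h^{-1}(x)-\Phi_h^{-1}(y)|$, this reduces the problem to controlling $b-a$.

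Next I would revisit the proof of Proposition \ref{prop:PhiLip}: the pointwise bound $|(I+h\nabla^2 U)^{-1}|\le 1-\tfrac12 mh$ for $|x|\ge R$ and $\le 1+2Mh$ for $|x|<R$ (valid for $h<1/(2M)$), combined with the length estimate on $\{\lambda\in[0,1]:|z_\lambda|\le R\}$, actually yields the \emph{linear} contraction constants
\begin{equation*}
    b\le (1-\tfrac{mh}{4})\,a\ \text{ if }a>R',\qquad b\le (1+2Mh)\,a\ \text{ if }a\le R'.
\end{equation*}
(The exponential form in Proposition \ref{prop:PhiLip} is obtained by the monotone bounds $1-x\le e^{-x}$ and $1+x\le e^x$, but for the present lemma the linear form is needed.) Plugging these into the tangent inequality and using $f'(a)\ge 0$ gives \eqref{eq:fPhiclaim0} directly: in the first regime $f'(a)(b-a)\le -\tfrac{mh}{4}\,af'(a)$, and in the second $f'(a)(b-a)\le 2Mh\,af'(a)$.

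To pass from \eqref{eq:fPhiclaim0} to \eqref{eq:fPhiclaim1} I would compare $af'(a)$ with $f(a)$ using the bounds $e^{-c_f R_f}\le f'(r)\le 1$, which yield $e^{-c_f R_f}a\le f(a)\le a$ and hence
\begin{equation*}
    e^{-c_f R_f}\,f(a)\ \le\ af'(a)\ \le\ e^{c_f R_f}\,f(a).
\end{equation*}
In the first regime this gives $f(b)\le f(a)-\tfrac{m}{4}e^{-c_f R_f}h\,f(a)\le e^{-C_1' h}f(a)$ with $C_1'=\tfrac{m}{4}e^{-c_f R_f}$, using $1-x\le e^{-x}$; in the second it gives $f(b)\le f(a)+2Me^{c_f R_f}h\,f(a)\le e^{C_2' h}f(a)$ with $C_2'=2Me^{c_f R_f}$, using $1+x\le e^x$.

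The calculation is essentially routine; the only genuinely delicate point is the first step, namely recognising that the proof of Proposition \ref{prop:PhiLip} already supplies the linear contraction/expansion constants $(1-mh/4)$ and $(1+2Mh)$ (rather than only their exponential relaxations). Without this sharper input one picks up an inevitable higher-order error $O(h^2)$ in the first-order tangent term $af'(a)$, which would spoil the exact prefactors $m/4$ and $2M$ in \eqref{eq:fPhiclaim0} that are needed downstream to choose $c_f$ and $R_f$ in Theorem \ref{thm:contraction}.
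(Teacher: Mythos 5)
Your proof is correct and takes the same route as the paper: concavity of $f$ gives the tangent inequality $f(b)\le f(a)+f'(a)(b-a)$, the Lipschitz control on $\Phi_h^{-1}$ bounds $b-a$, and the two-sided comparisons $e^{-c_fR_f}\le f'(r)\le 1$ and $e^{-c_fR_f}r\le f(r)\le r$ convert \eqref{eq:fPhiclaim0} into \eqref{eq:fPhiclaim1}. You also add a genuinely useful clarification that the paper glosses over. The \emph{statement} of Proposition \ref{prop:PhiLip} gives only the exponential Lipschitz constants $e^{-mh/4}$ and $e^{2Mh}$, and since $e^{-x}-1\ge -x$ and $e^{x}-1\ge x$ these by themselves are \emph{not} strong enough to produce the linear prefactors $-\frac{m}{4}h$ and $+2Mh$ in \eqref{eq:fPhiclaim0}: the tangent inequality with the exponential bound only gives $f(b)\le f(a)+f'(a)a(e^{-mh/4}-1)$, and $e^{-mh/4}-1\ge -\frac{m}{4}h$ is the wrong direction. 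One must reach into the proof of Proposition \ref{prop:PhiLip} and use the averaged bounds $\big|\int_0^1(I+h\nabla^2 U(\Phi_h^{-1}(z_\lambda)))^{-1}d\lambda\big|\le 1-\frac{m}{4}h$ (for $|x-y|>R'$) and $\le 1+2Mh$ (always), which are derived there but not recorded in its conclusion, exactly as you note. The paper's proof simply cites Proposition \ref{prop:PhiLip}; your version makes the required sharper input explicit, and the remainder of the computation (the passage from \eqref{eq:fPhiclaim0} to \eqref{eq:fPhiclaim1} via $e^{-c_fR_f}f(a)\le af'(a)\le e^{c_fR_f}f(a)$) matches the paper verbatim.
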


\begin{proof}
Since $f$ is concave,
\begin{equation*}
   f\left(\left| \Phi^{-1}_h(x) - \Phi^{-1}_h(y)\right|\right) \leq f(|x-y|) + f'(|x-y|) \left(\left| \Phi^{-1}_h(x) - \Phi^{-1}_h(y)\right| - |x-y| \right).
\end{equation*}
\eqref{eq:fPhiclaim0} then follows due to Proposition \ref{prop:PhiLip} and the fact that $f' > 0$. \eqref{eq:fPhiclaim1} is a direct corollary of \eqref{eq:fPhiclaim0}, since $e^{-c_f R_f} \leq f'(r) \leq 1$, and $e^{-c_f R_f}r \leq f(r) \leq r, \forall r \geq 0$.

\end{proof}

Lemma \ref{lmm:fPhi} reveals the contraction effect brought by the far-field confining drift term $\nabla U$, and plays the key role in the proof of Theorem \ref{thm:contraction}. In particular, the claim \eqref{eq:fPhiclaim0} will be used to derive \eqref{eq:case1eq2} (in Case 1 below) and \eqref{eq:uselmm52} (in Case 2 below); the claim \eqref{eq:fPhiclaim1} will be used in \eqref{eq:usefphiclaim1} and arguments before \eqref{eq:endcase1} below.

\subsection{Evolution of the diffusion step: a reflection-type coupling}\label{sec:diffusion}

The analysis for the diffusion step is based on a reflection-type coupling approach. Firstly, let us introduce the construcsted coupling $\left(\left(X^h_{t_{n}}\right)_{n=0}^{\infty}, \left(Y^h_{t_{n}}\right)_{n=0}^{\infty}  \right)$ with the initial distributions $X^h_0 \sim \mu_0$, $Y^h_0 \sim \nu_0$, and $(X^h_0, Y^h_0)$ is the optimal coupling so that $W_1 (\mu_0, \nu_0) = \mathbb{E}|X^h_0 - Y^h_0|$ (note that such $(X^h_0, Y^h_0)$ can always be found due to standard optimal transport theory \cite{villani2008optimal}).
At $n$-th iteration, $\left(\left(X^h_{t_{n}}\right)_{n=0}^{\infty},\left(Y^h_{t_{n}}\right)_{n=0}^{\infty}  \right)$ is evolving according to the followings:
\begin{equation}
    \begin{aligned}
        &\tilde{X}^h_t = X^h_{t_n} + \sqrt{2}\int_{t_n}^{t} dW_s, \,\, t \in [t_n, t_{n+1}],\quad X_{t_{n+1}}^h = \Phi_h^{-1}\left(\tilde{X}^h_{t_{n+1}}\right),\\
        &\tilde{Y}^h_t =\left\{
        \begin{aligned}
            &Y^h_{t_n} + \sqrt{2}\int_{t_n}^{t} \left(I_d - 2 e_s^{\otimes 2} \right)\cdot dW_s,\,\,t<\tau,\\
            &\tilde{X}^h_t,\,\, t\geq \tau,
        \end{aligned}
        \right.
        \,\,t \in [t_n, t_{n+1}],\quad Y_{t_{n+1}}^h = \Phi_h^{-1}\left(\tilde{Y}^h_{t_{n+1}}\right),
    \end{aligned}
\end{equation}
where $W_s$ above denotes the same Brownian motion,
\begin{equation}
    e_t := \frac{\tilde{X}^h_t - \tilde{Y}^h_t}{|\tilde{X}^h_t - \tilde{Y}^h_t|},
\end{equation}
and the stopping time $\tau$ is define by
\begin{equation}
    \tau := \inf\{t \geq 0: \tilde{X}^h_t = \tilde{Y}^h_t \}.
\end{equation}
Clearly, $\left(\left(X^h_{t_{n}}\right)_{n=0}^{\infty}, \left(Y^h_{t_{n}}\right)_{n=0}^{\infty}  \right)$ are two couplied copies of iLMC. Also note that the stopping can only happen during the diffusion step since $\Phi_h^{-1}$ is a homomorphism for $h < 1/(2M)$. Also, if $\tau \in [t_n,t_{n+1})$, it is easy to see that $X^h_{t_m} = Y^h_{t_m}$ for all $m > n$.

In what follows, let us fix $n \in \mathbb{N}$ and focus on the one-step evolution. Denote $Z^h_{t_n}:= X^h_{t_n} - Y^h_{t_n}$ and $\tilde{Z}^h_t := \tilde{X}^h_t - \tilde{Y}^h_t$ for $t \in [t_n, t_{n+1}]$. Clearly,
\begin{equation*}
    \tilde{Z}^h_{t} = X^h_{t_n} - Y^h_{t_n} + 2\sqrt{2}\int_{t_n \wedge \tau}^{t\wedge \tau} \frac{(\tilde{Z}_s^h)^{\otimes 2}}{|\tilde{Z}_s^h|^2 }\cdot dW_s.
\end{equation*}
Then, since
\begin{equation*}
    \begin{aligned}
& \nabla f(|x|)=f^{\prime}(|x|) \frac{x}{|x|}, \quad
 \nabla^2 f(|x|)=f^{\prime \prime}(|x|) \frac{x \otimes x}{|x|^2}+f^{\prime}(|x|) \frac{1}{|x|}\left(I-\frac{x \otimes x}{|x|^2}\right),
\end{aligned}
\end{equation*}
Dykin's formula directly gives
\begin{lemma}\label{lmm:ddtEfZ}
For all $t \geq 0$,
\begin{equation}
\frac{d}{d t} \mathbb{E} f(|\tilde{Z}^h_t|)=4\mathbb{E} \left[ f^{\prime \prime}(|\tilde{Z}^h_t|) \textbf{1}_{\{t \leq \tau\}}\right]=-4  c_f \mathbb{E} \left[e^{-c_f(|\tilde{Z}^h_t| \wedge R_f)} \textbf{1}_{\{|\tilde{Z}^h_t| \leq R_f\}} \textbf{1}_{\{t \leq \tau\}}\right].
\end{equation}
\end{lemma}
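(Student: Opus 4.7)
The plan is to apply Dynkin's formula (the expectation form of Itô's formula) to the real-valued process $t \mapsto f(|\tilde{Z}^h_t|)$, exploiting the very special structure of the reflection coupling. First, I would write down the SDE satisfied by $\tilde{Z}^h_t$ on the (pre-stopping) diffusion interval: subtracting the two defining equations on $\{t<\tau\}$ gives
\begin{equation*}
    d\tilde{Z}^h_t = \sqrt{2}\bigl(I_d - (I_d - 2 e_t^{\otimes 2})\bigr)\, dW_t = 2\sqrt{2}\, e_t^{\otimes 2}\, dW_t,
\end{equation*}
so that the quadratic-variation matrix is $d[\tilde{Z}^h,\tilde{Z}^h]_t = 8\, e_t e_t^{\top}\, dt$, using $|e_t|=1$. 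On $\{t\geq \tau\}$ we simply have $\tilde{Z}^h_t = 0$, so $f(|\tilde{Z}^h_t|)=0$ contributes nothing.

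Next, I would plug the formulas for $\nabla f(|x|)$ and $\nabla^2 f(|x|)$ given just above the lemma into Itô's formula. The drift (Itô correction) term is $\tfrac{1}{2}\bigl(8 e_t e_t^{\top}\bigr) : \nabla^2 f(|\tilde{Z}^h_t|)$, which simplifies dramatically because $e_t$ is parallel to $\tilde{Z}^h_t$: the $f''$ piece contributes $f''(|\tilde{Z}^h_t|)\,|e_t|^4 = f''(|\tilde{Z}^h_t|)$, and the $f'/r$ piece contributes $f'(|\tilde{Z}^h_t|)/|\tilde{Z}^h_t|\cdot\bigl(|e_t|^2 - |e_t|^4\bigr) = 0$. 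Multiplying by $4$ yields a drift of $4 f''(|\tilde{Z}^h_t|)\mathbf{1}_{\{t\leq \tau\}}$. The martingale term is a true martingale (not merely local) because $e_t^{\otimes 2}$ has bounded Frobenius norm and $f'$ is bounded by $1$, so taking expectations gives the first equality.

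For the second equality I would just differentiate the defining integral of $f$: $f'(r) = e^{-c_f (r\wedge R_f)}$, and hence in the weak (distributional) sense
\begin{equation*}
    f''(r) = -c_f\, e^{-c_f r}\, \mathbf{1}_{\{r<R_f\}}.
\end{equation*}
Substituting this into the drift and using that $\{|\tilde{Z}^h_t|\leq R_f\}$ and $\{|\tilde{Z}^h_t|<R_f\}$ agree on a set of full measure yields the stated formula.

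The main obstacle is that $f'$ is only Lipschitz (not $C^1$) at $r = R_f$, so a direct appeal to the classical $C^2$ Itô formula is not quite valid. I would address this by a standard mollification: approximate $f$ by $f_{\delta}\in C^2$ with $f_{\delta}\to f$ and $f_{\delta}''\to f''$ a.e.\ and in a dominated fashion, apply Itô to $f_{\delta}(|\tilde{Z}^h_t|)$, and pass to the limit using dominated convergence (the dominating bounds come from $\|f_{\delta}'\|_\infty \le 1$ and $\|f_{\delta}''\|_\infty \le c_f$). A second minor issue is the non-smoothness of $x\mapsto|x|$ at the origin, but the stopping time $\tau$ prevents us from ever evaluating at $x=0$ on the set $\{t<\tau\}$, so this causes no trouble.
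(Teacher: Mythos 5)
Your proposal is correct and follows essentially the same route as the paper: the paper also applies Dynkin's formula to $f(|\tilde{Z}^h_t|)$ using the stated formulas for $\nabla f(|\cdot|)$ and $\nabla^2 f(|\cdot|)$, the reflection-coupling structure making the $f'/r$ term in the Hessian vanish. You have merely spelled out the quadratic-variation computation, the boundedness of the martingale integrand, and the mollification needed to handle the kink of $f'$ at $R_f$, which the paper leaves implicit.
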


Combining Lemma \ref{lmm:matrixbound} and Lemma \ref{lmm:ddtEfZ}, we know that when $|\tilde{Z}^h_{t_{n+1}}|$ is large, the drift step is contractive; when $|X^h_{t_{n}} - Y^h_{t_{n}}|$ is small, the diffusion step is contractive. Moreover, it can be shown that for any $t, s \in [t_m,t_{n+1}]$, the difference of $\tilde{Z}^h_{s}$ and $\tilde{Z}^h_{t}$ is subGaussian:

\begin{lemma}\label{lmm:subgaussian}
Denote
\begin{equation}
    \zeta_t := \int_{t_n \wedge \tau}^{t\wedge \tau} \frac{(\tilde{Z}_s^h)^{\otimes 2}}{|\tilde{Z}_s^h|^2 }\cdot dW_s,\quad t\in[t_n,t_{n+1}].
\end{equation}
Then for any $t, s \in [t_n,t_{n+1}]$, and $a \geq 0$, there exists a positive constant $C$ independent of $t$, $s$, $h$, $a$ such that
\begin{equation}
    \mathbb{P}\left(|\zeta_t - \zeta_s| \geq a \right) \leq 2 \exp \left( -C h^{-1} a^2\right).
\end{equation}
\end{lemma}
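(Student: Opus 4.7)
The plan is to exploit the specific algebraic structure of the integrand defining $\zeta_t$. The key observation is that the matrix $\sigma_s := (\tilde Z_s^h)^{\otimes 2}/|\tilde Z_s^h|^2$ is the rank-one orthogonal projection onto the direction $e_s$. In particular it is symmetric with operator norm at most $1$ and satisfies $\sigma_s^2=\sigma_s$. Consequently, for any deterministic unit vector $v\in\mathbb{R}^d$, the real-valued process $M_u := v\cdot(\zeta_u - \zeta_s)$ is a continuous $(\mathcal{F}_u)$-martingale starting at $0$ whose quadratic variation is
$$\langle M\rangle_u \;=\; \int_{s\wedge\tau}^{u\wedge\tau} v^\top \sigma_r^2 v\,dr \;=\; \int_{s\wedge\tau}^{u\wedge\tau}(v\cdot e_r)^2\,dr \;\leq\; u-s \;\leq\; h.$$
Note that the stopping at $\tau$ only shortens the interval of integration, so no separate analysis of $\tau$ is required.

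Given this uniform bound on the quadratic variation, I would then invoke the standard exponential supermartingale estimate: the process $\exp(\lambda M_u - \tfrac{\lambda^2}{2}\langle M\rangle_u)$ is a nonnegative supermartingale with initial value $1$, so taking expectations, using $\langle M\rangle_u\leq h$, and optimizing in $\lambda$ yield the one-dimensional sub-Gaussian tail
$$\mathbb{P}(|v\cdot(\zeta_t - \zeta_s)|\geq a) \;\leq\; 2\exp\!\left(-\frac{a^2}{2h}\right),\qquad a\geq 0.$$
Applying this coordinatewise with $v=e_i$ and $a$ replaced by $a/\sqrt d$, then combining with $|\zeta_t-\zeta_s|\leq \sqrt d\max_i|(\zeta_t-\zeta_s)_i|$ and a union bound, produces
$$\mathbb{P}(|\zeta_t - \zeta_s|\geq a) \;\leq\; 2d\exp\!\left(-\frac{a^2}{2dh}\right).$$

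To match the clean prefactor $2$ in the claimed bound, the last step is to absorb the extra $d$ into the exponent: for $a$ with $a^2\geq 4dh\log d$ one has $2d\exp(-a^2/(2dh))\leq 2\exp(-a^2/(4dh))$, while for smaller $a$ the claim becomes trivial once the constant $C$ is taken so that $2\exp(-Ca^2/h)\geq 1$ on that range. Taking $C$ as the smaller of these two constants gives the stated inequality. Honestly there is no real obstacle here; the statement is a textbook martingale concentration estimate, and the only structural input peculiar to the iLMC reflection coupling is the projection identity $\sigma_s^2=\sigma_s$, which is what allows the quadratic-variation bound to reduce to the elementary $|t-s|$.
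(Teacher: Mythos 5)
Your proof is correct, and it is genuinely a different route from the one in the paper. The paper establishes the sub-Gaussian tail by first proving a $\psi_2$-Orlicz bound $\mathbb{E}\bigl[e^{\alpha|\zeta_t-\zeta_s|^2}\mid\mathcal{F}_{t_n}\bigr]\leq 2$: it expands the exponential in a Taylor series, bounds each even moment $\mathbb{E}|\zeta_t-\zeta_s|^{2p}$ by the Burkholder--Davis--Gundy inequality together with the scalar quadratic-variation estimate $\langle\zeta\rangle_{t}-\langle\zeta\rangle_s\leq h$ (which, like your bound, ultimately comes from the projection identity $\sigma_r^2=\sigma_r$, here via $\mathrm{tr}\,\sigma_r=1$), sums the series using $C_{2p}\le(C\sqrt{2p})^{2p}$, and then applies Chernoff. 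You instead project onto a fixed direction $v$, use that $v^\top\sigma_r v=(v\cdot e_r)^2\le1$ to bound the scalar quadratic variation by $h$, apply the standard exponential-supermartingale tail, and close with a coordinatewise union bound. The two proofs use the same structural input (the rank-one projection structure of the reflection coupling) but package the martingale concentration differently: your argument is more elementary (no BDG constants, no series resummation), at the cost of a union bound that makes the constant $C$ scale like $1/(d\log d)$; the paper's BDG/$\psi_2$ route keeps the constant essentially dimension-free. Since the lemma only asserts existence of a constant independent of $t,s,h,a$, the dimension dependence you incur is harmless, and your final case split to absorb the factor $2d$ into the exponent is handled correctly.
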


The main reason that Lemma \ref{lmm:subgaussian} holds is that the covariance matrix has unit norm. We refer the readers to [Li, Liu, Wang, 2024] for a similar proof. We also provide a detailed derivation in Appendix \ref{app:ergodicitylemma}.

Now mainly based on Lemma \ref{lmm:fPhi}, Lemma \ref{lmm:ddtEfZ} and Lemma \ref{lmm:subgaussian}, we are able to combine the drift step and diffusion step and prove the $W_f$-contraction. The detailed derivation also involves some technical lemmas estimating some small-probability events. We move these tedious derivations to Appendix \ref{app:ergodicitylemma}.

\begin{proof}[Proof of Theorem \ref{thm:contraction}]

Fix $n \in \mathbb{N}$. It suffices to prove the following one-step contraction for the coupling $\left(\left(X^h_{t_{n}}\right)_{n=0}^{\infty}, \left(Y^h_{t_{n}}\right)_{n=0}^{\infty}  \right)$:

\begin{equation*}
    \mathbb{E}f(|X^h_{t_{n+1}} - Y^h_{t_{n+1}}|) \leq (1 - Ch)\mathbb{E}f(| X^h_{t_n} - Y^h_{t_n}|)
\end{equation*}
for some positive constant $C$ independent of $h$ and $n$


Recall that we choose $R_f = 3R' = 3(4 + 16M / m)R$ and $c_f>0$ is a large constant to be determined below. Denote $Z^h_{t_n}:= X^h_{t_n} - Y^h_{t_n}$ for $n = 0,1,2,\dots$. Fix $n \in \mathbb{N}$. Fix a small $\delta \in (0, 1/2)$. We decompose the whole probability space into the following parts:
\begin{equation*}
    \Omega_1 := \left\{|Z^h_{t_n}| < h^{\frac{1}{2}-\delta} \right\},\quad
    \Omega_2 := \left\{ h^{\frac{1}{2}-\delta} \leq |Z^h_{t_n}| \leq 2R'\right\},\quad
    \Omega_3 := \left\{|Z^h_{t_n}| > 2R' \right\}.
\end{equation*}
Note that the main reason for the choice of $h^{1/2-\delta}$ is the subGaussian tail in Lemma \ref{lmm:subgaussian}.

\textbf{Case 1. Consider $\Omega_1$.} The main challenge in this case is that the probability of $t < \tau$ may not be close to $1$. Denote the event
\begin{equation*}
    F_1(t)=\left\{\exists s \in\left[t_n, t\right]:\left|\tilde{Z}^h_s\right|=2 h^{1 / 2-\delta}\right\},\quad t \in [t_n,t_{n+1}].
\end{equation*}
Clearly, by continuity of Brownian motion, on $F_t(t)^c$, one always has $|\tilde{Z}^h_t| \leq R_f$. Then since $\exp(-2c_fh^{\frac{1}{2}-\delta}) \geq \frac{3}{4}$, similarly as in Lemma \ref{lmm:ddtEfZ}, after It\^o's calculus one can obtain that
\begin{equation*}
    \frac{d}{d t} \mathbb{E}\left[ \textbf{1}_{\Omega_1} f\left(\left|\tilde{Z}^h_t\right|\right)\right] \leq-3  c_f \mathbb{E} \left[\textbf{1}_{\Omega_1 \cap F_1(t)^c} \textbf{1}_{\{t<\tau\}}\right].
\end{equation*}
We show in Lemma \ref{lmm:technical1} below that
\begin{equation*}
    \mathbb{E}\left[ \textbf{1}_{\Omega_1 \cap F_1(t)^c} 1_{\{t<\tau\}} \right]\geq\left(1-\eta_1\left(h^{1 / 2-\delta}, 2 h^{1 / 2-\delta}, h\right)\right) \mathbb{E} \left[\tone_{\Omega_1} \tone_{\{t<\tau\}}\right].
\end{equation*}
Hence, for $h$ small enough, one has
\begin{equation}\label{eq:case1eq1}
    \frac{d}{d t} \mathbb{E}\left[ \tone_{\Omega_1} f(|\tilde{Z}^h_t|)\right] \leq-2 c_f \mathbb{E} \left[\tone_{\Omega_1} \tone_{\{t<\tau\}} \right]\leq - c_f \mathbb{E} \left[\tone_{\Omega_1} \tone_{\{t<\tau\}}\right]- c_f \mathbb{E} \left[\tone_{\Omega_1} \tone_{\{t_{n+1}<\tau\}}\right].
\end{equation}
Intuitively, we can then make use of the latter term to control the drift step. In fact, by \eqref{eq:fPhiclaim0} in Lemma \ref{lmm:fPhi}, and since $f' \in (0,1)$, one has
\begin{equation}\label{eq:case1eq2}
    \mathbb{E} \left[\tone_{\Omega_1} f\left(\left|Z^h_{t_{n+1}}\right|\right)\right]-\mathbb{E} \left[\tone_{\Omega_1} f\left(\left|\tilde{Z}^h_{t_{n+1}}\right|\right) \right]\leq 4M h  \mathbb{E} \left[\tone_{\Omega_1}\left|\tilde{Z}^h_{t_{n+1}}\right|\right].
\end{equation}
By the second claim in Lemma \ref{lmm:technical1}, one has
\begin{equation}\label{eq:caseqeq3}
\begin{aligned}
\mathbb{E} \left[\tone_{\Omega_1}\left|\tilde{Z}^h_{t_{n+1}}\right| \right]&\leq(1+\eta_2(h^{1 / 2-\delta}, 2 h^{1 / 2-\delta}, h)) \mathbb{E} \left[\tone_{\Omega_1}\left|\tilde{Z}^h_{t_{n+1}}\right| \tone_{\{|\tilde{Z}^h_{t_{n+1}}| \leq 2 h^{1 / 2-\delta}\}} \right]\\
&\leq 2 h^{1 / 2-\delta}  (1+\eta_2(h^{1 / 2-\delta}, 2 h^{1 / 2-\delta}, h)) \mathbb{E} \left[\tone_{\{\tau>t_{n+1}\}} \tone_{\Omega_1}\right].
\end{aligned}
\end{equation}
Here, $\eta_2(a,b,h):=\frac{6}{a}\left[b+\frac{h}{C (b-a)}\right] \exp \left(-\frac{C (b-a)^2}{2 h}\right)$, so for $\delta \in (0,1/2)$ and small $h$, $\eta_2$ is a small positive number dacaying to zero exponentially fast as $h$ vanishes. Hence, concluding \eqref{eq:case1eq1} -- \eqref{eq:caseqeq3}, one can choose $c_f$ large ($c_f > 16Mh^{1/2-\delta}$) such that
$$
\mathbb{E} \tone_{\Omega_1} f\left(\left|Z^h_{t_{n+1}}\right|\right) \leq \mathbb{E} \tone_{\Omega_1} f\left(\left|Z^h_{t_n}\right|\right)- c_f \int_{t_n}^{t_{n+1}} \mathbb{E} \tone_{\Omega_1} \tone_{\{t<\tau\}}.
$$
It is then remaining to handle the $\int_{t_n}^{t_{n+1}} \mathbb{E} \tone_{\Omega_1} \tone_{\{t<\tau\}}$ term.

Let $u(t):=\mathbb{E} \left[\tone_{\Omega_1} f(|\tilde{Z}^h_t|)\right]$. By \eqref{eq:case1eq1} and the similar argument in \eqref{eq:caseqeq3}, for $t<t_{n+1}$, one has
$$
\begin{aligned}
u(t) &\leq u\left(t_n\right)- c_f \int_{t_n}^t \mathbb{E} \left[\tone_{\Omega_1} \tone_{\{t<\tau \}}\right]dt
\leq u\left(t_n\right)-\frac{ c_f}{2 h^{1 / 2-\delta}} \int_{t_n}^{t_{n+1}} \mathbb{E} \left[\tone_{\Omega_1}\left|\tilde{Z}^h_t\right| \tone_{\{|\tilde{Z}^h_t| \leq 2 h^{1 / 2-\delta}\}}\right]dt \\
&\leq u\left(t_n\right)-\frac{ c_f}{2 h^{1 / 2-\delta}}\left(1-\eta_2\right) \int_{t_n}^t u(s) d s \leq u(t_n) - c\int_{t_n}^t u(s) ds,
\end{aligned}
$$
where have used the fact $\left|\tilde{Z}^h_t\right| \tone_{\{t<\tau\}}=\left|\tilde{Z}^h_t\right|$.
Then, if one directly applies the Gr\"onwall's inequality, the only remaining problem is: $\mathbb{E} \tone_{\Omega_1} f\left(\left|Z^h_{t_{n+1}}\right|\right) \neq u\left(t_{n+1}\right)$. In fact, we can resolve this by defining 
$$v(t):=\max \left\{\mathbb{E} \tone_{\Omega_1} f\left(\left|Z^h_{t_{n+1}}\right|\right), u(t)\right\}.$$ 
Since $\mathbb{E} 1_{\Omega_1} f\left(\left|Z_{t_{n+1}}\right|\right) \leq u\left(t_n\right)$ by the estimate above, one knows that $v$ is continuous and $v\left(t_n\right)=u\left(t_n\right)$. Obviously, $u(t)$ is monotonically decreasing and $\mathbb{E} \left[\tone_{\Omega_1} f\left(\left|Z^h_{t_{n+1}}\right|\right)\right] \leq 2 u\left(t_{n+1}\right)$ for small $h$ (recall \eqref{eq:fPhiclaim1} in Lemma \ref{lmm:fPhi}). Hence, one has $u(s) \geq C v(s)$ for some universal positive constant $C$. Consequently,
$$
v(t) \leq v\left(t_n\right)-c \int_{t_n}^t v(s) d s.
$$
Therefore, by choosing large $c_f$ ($c_f > 16Mh^{1/2-\delta}$, and $c_f>CM^{\delta + 1/2}$ suffices due to $h<1/(2M)$), there exists positive $C$ independent of $h$ and $n$ such that
\begin{equation}\label{eq:endcase1}
     \mathbb{E}\left[f(|Z^h_{t_{n+1}}|)\textbf{1}_{\Omega_1}\right]\leq e^{-C h} \mathbb{E}\left[f(|Z^h_{t_n}|)\textbf{1}_{\Omega_1}\right].
\end{equation}

\textbf{Case 2. Consider $\Omega_2$.}
Note that on $\Omega_2$, by the subGaussian property in Lemma \ref{lmm:subgaussian}, $\tau \leq t$ will almost not happen. Here a main challenge is that $e^{-c_f\left|\tilde{Z}^h_t\right|}$ is not close to $1$ . Since $c_f$ is large, we cannot naively bound this by $e^{-2c_f R'}$ from below. To address this, we define $\mu_m:=m h^{1 / 2-\delta}$ and decompose $\Omega_2$ into the following parts:

$$
\Omega_{2, m}:=\left\{\mu_m \leq\left|Z_{t_n}\right|<\mu_{m+1}\right\}, \quad m=1,2, \cdots,\left\lceil 2 R' / h^{1 / 2-\delta}\right\rceil-1 .
$$

Similar to Lemma \ref{lmm:ddtEfZ}, during the diffusion step, one has
$$
\frac{d}{d t} \mathbb{E} \left[\tone_{\Omega_{2, m}} f(|\tilde{Z}^h_t|)\right]=-4  c_f \mathbb{E} \left[\tone_{\Omega_{2, m}} e^{-c_f|
\tilde{Z}^h_t|} \tone_{\{|\tilde{Z}^h_t| \leq R_f\}} \tone_{\{t \leq \tau\}}\right].
$$
Using the fact $e^{-C  h} \mu_m \leq\left|Z_{t_n}\right| \leq \mu_{m+1} e^{Ch} \leq 2R' e^{C h}$, one has (for $a \in (0, h^{1/2-\delta}$)
$$
\begin{aligned}
\mathbb{E}\left[\tone_{\Omega_{2, m}} e^{-c_f|\tilde{Z}^h_t|} \tone_{\{|\tilde{Z}^h_t| \leq R_f\}} \tone_{\{t \leq \tau\}}\right]& \geq e^{-c_f \mu_{m+1} e^{C  h}} \mathbb{E}\left[\tone_{\Omega_{2, m}} e^{-c_f|\tilde{Z}^h_t-Z^h_{t_n}}| \tone_{\{|\tilde{Z}^h_t| \leq R_f\}} \tone_{\{t \leq \tau\}}\right] \\
& \geq \frac{1}{2} \frac{e^{-c_f \mu_m}}{f(R_f)} \mathbb{E}\left[\tone_{\Omega_{2, m}} \tone_{\{|\tilde{Z}^h_t-Z^h_{t_n}|<a\}} f(|\tilde{Z}^h_t|)\right].
\end{aligned}
$$
Clearly, for small $h$, applying Lemma \ref{lmm:technical2} with $a=3 h^{1 / 2-\delta} / 8$, one has
$$
\mathbb{E} \left[\tone_{\Omega_{2, m}} e^{-c_f|\tilde{Z}^h_t|} \tone_{\{|\tilde{Z}^h_t| \leq R_f\}} \tone_{\{t \leq \tau\}}\right] \geq \frac{1}{4 f(R_f)} e^{-c_f \mu_m} \mathbb{E}\left[\tone_{\Omega_{2, m}} f(|\tilde{Z}^h_t|)\right].
$$
Hence, by Gr\"onwall's inequality, the diffusion step gives
\begin{equation*}
    \mathbb{E} \left[\tone_{\Omega_{2, m}} f(|\tilde{Z}^h_{t_{n+1}}|)\right] \leq \exp \left(-\beta^{-1} \frac{c_f}{f\left(R_1\right)} e^{-c_f \mu_m} h\right) \mathbb{E} \left[ \tone_{\Omega_{2, m}} f\left(\left|Z^h_{t_n}\right|\right)\right].
\end{equation*}

For the drift step, 
by \eqref{eq:fPhiclaim0} in Lemma \ref{lmm:fPhi},
\begin{multline}\label{eq:uselmm52}
\mathbb{E} \left[\tone_{\Omega_{2, m}} f\left(\left|Z^h_{t_{n+1}}\right|\right)\right]-\mathbb{E} \left[\tone_{\Omega_{2, m}} f\left(\left|\tilde{Z}^h_{t_{n+1}}\right|\right) \right] \leq 2Mh\mathbb{E}\left[\tone_{\Omega_{2, m}} f'\left(\left|\tilde{Z}^h_{t_{n+1}}\right| \right) \left|\tilde{Z}^h_{t_{n+1}}\right|\right]\\
= 2M h \mathbb{E} \left[e^{-c_f\left|\tilde{Z}^h_{t_{n+1}}\right| \wedge R_f} \left|\tilde{Z}^h_{t_{n+1}}\right|\tone_{\{t_{n+1}<\tau\}} \tone_{\Omega_{2, m}}\right].
\end{multline}
By the second claim in Lemma \ref{lmm:technical1}, one has
\begin{multline*}
    \mathbb{E} \left[e^{-c_f|\tilde{Z}^h_{t_{n+1}}| \wedge R_f}\left|\tilde{Z}^h_{t_{n+1}}\right| \tone_{\Omega_{2, m}} \tone_{\{t_{n+1}<\tau\}}\right]\\
\leq\left(1+e^{-c_f R_f} \eta_2\left(\mu_{m+1}, R_f, h\right)\right) \mathbb{E} \left[e^{-c_f|\tilde{Z}^h_{t_{n+1}}|}\left|\tilde{Z}^h_{t_{n+1}}\right| \tone_{\{|\tilde{Z}^h_{t_{n+1}}| \leq R_f\}} \tone_{\Omega_{2, m}} \right].
\end{multline*}
Once can choose $h$ small such that $\eta_2 < 1$ which further implies $1 + e^{-c_f R_f} \eta_2 \leq 2$. Since $r / f(r)$ is increasing for $r \in (0, \infty)$, one further has
$$
\mathbb{E} \left[e^{-c_f|\tilde{Z}^h_{t_{n+1}}|}\left|\tilde{Z}^h_{t_{n+1}}\right| \tone_{\{|\tilde{Z}^h_{t_{n+1}}| \leq R_f\}} \tone_{\Omega_{2, m}}\right]
\leq \frac{R_f}{f\left(R_f\right)} \mathbb{E}\left[ e^{-c_f|\tilde{Z}^h_{t_{n+1}}|} f\left(|\tilde{Z}^h_{t_{n+1}}|\right) \tone_{\{|\tilde{Z}^h_{t_{n+1}}| \leq R_f\}} \tone_{\Omega_{2, m}}\right].
$$
Applying Lemma \ref{lmm:technical2} again,
$$
\begin{aligned}
&\quad \mathbb{E}\left[ e^{-c_f|\tilde{Z}^h_{t_{n+1}}|} f\left(|\tilde{Z}^h_{t_{n+1}}|\right) \tone_{\{|\tilde{Z}^h_{t_{n+1}}| \leq R_f\}} \tone_{\Omega_{2, m}} \right]\\
& \leq \eta_3 \mathbb{E} f\left(|\tilde{Z}^h_{t_{n+1}}|\right) \tone_{\Omega_{2, m}}+\mathbb{E}\left[e^{-c_f|\tilde{Z}^h_{t_{n+1}}|} f\left(|\tilde{Z}^h_{t_{n+1}}|\right) \tone_{\{|\tilde{Z}^h_{t_{n+1}}-Z^h_{t_n}|<a\}} 1_{\Omega_{2, m}}\right] \\
& \leq \eta_3 \mathbb{E} f\left(|\tilde{Z}^h_{t_{n+1}}|\right) \tone_{\Omega_{2, m}}+2 e^{-c_f \mu_m} \mathbb{E}\left[f\left(|\tilde{Z}^h_{t_{n+1}}|\right) \tone_{\{|\tilde{Z}^h_{t_{n+1}}-Z^h_{t_n}|<a\}} \tone_{\Omega_{2, m}}\right] \\
& \leq\left(\eta_3+2 e^{-c_f \mu_m}\right) \mathbb{E} \left[f\left(|\tilde{Z}^h_{t_{n+1}}|\right) \tone_{\Omega_{2, m}}\right].
\end{aligned}
$$
Clearly, when $h$ is small, we have $\eta_3 \leq e^{-c_f \mu_m}$ as $\eta_3$ is exponentially small.

Combining the diffusion and drift step, one has
$$
\begin{aligned}
\mathbb{E} \left[\tone_{\Omega_{2, m}} f\left(|Z^h_{t_{n+1}}|\right) \right]
& \leq\left(1+12M  \frac{R_f}{f\left(R_f\right)} e^{-c_f \mu_m} h\right) \exp \left(- \frac{c_f}{f\left(R_f\right)} e^{-c_f \mu_m} h\right) \mathbb{E} \left[\tone_{\Omega_{2, m}} f\left(\left|Z^h_{t_n}\right|\right)\right].
\end{aligned}
$$
Taking large $c_f$ ($c_f > 12MR_f = 24MR' = 24M(4 + 16M / m)R$), and summing up all $m$, there exists positive $C$ independent of $h$ and $n$ such that
\begin{equation}
     \mathbb{E}\left[f(|Z^h_{t_{n+1}}|)\textbf{1}_{\Omega_2}\right]\leq e^{-C h} \mathbb{E}\left[f(|Z^h_{t_n}|)\textbf{1}_{\Omega_2}\right].
\end{equation}

\textbf{Case 3. Consider $\Omega_3$.}
In the far-field region, the contraction is obvious. Indeed, by\eqref{eq:fPhiclaim1} in Lemma \ref{lmm:fPhi}, 
\begin{equation}\label{eq:usefphiclaim1}
\begin{aligned}
    &\quad\mathbb{E}\left[f(|Z^h_{t_{n+1}}|)\textbf{1}_{\Omega_3}\right] = \mathbb{E}\left[f(|Z^h_{t_{n+1}}|)\textbf{1}_{\Omega_3}\textbf{1}_{\{|\tilde{Z}_{t_{n+1}}^h| \leq 3R'/2 \}}\right] + \mathbb{E}\left[f(|Z^h_{t_{n+1}}|)\textbf{1}_{\Omega_3}\textbf{1}_{\{|\tilde{Z}_{t_{n+1}}^h| > 3R'/2 \}}\right]\\
    &\leq e^{C_2'h}\mathbb{E}\left[f(|\tilde{Z}^h_{t_{n+1}}|)\textbf{1}_{\Omega_3}\textbf{1}_{\{|\tilde{Z}_{t_{n+1}}^h| \leq 3R'/2 \}}\right] + e^{-C_1'h}\mathbb{E}\left[f(|\tilde{Z}^h_{t_{n+1}}|)\textbf{1}_{\Omega_3}\textbf{1}_{\{|\tilde{Z}_{t_{n+1}}^h| > 3R'/2 \}}\right].
\end{aligned}
\end{equation}
We prove in Lemma \ref{lmm:technical2} that (take $a = R'/2$, $b= 3R'/2$ and $t = t_{n+1}$ therein)
\begin{equation*}  \mathbb{E}\left[f(|\tilde{Z}^h_{t_{n+1}}|)\textbf{1}_{\Omega_3}\textbf{1}_{\{|\tilde{Z}_{t_{n+1}}^h| \leq 3R'/2 \}}\right] \leq \eta_3(h) \mathbb{E}\left[f(|\tilde{Z}^h_{t_{n+1}}|)\textbf{1}_{\Omega_3}\right],
\end{equation*}
with $\lim_{h \rightarrow 0}\eta_3(h) = 0$. Consequently, 
\begin{equation*}
    \mathbb{E}\left[f(|Z^h_{t_{n+1}}|)\textbf{1}_{\Omega_3}\right] \leq \left( e^{-C_1'h} + \left(e^{C_2'h} - e^{-C_1'h} \right) \eta_3(h)\right)\mathbb{E}\left[f(|\tilde{Z}^h_{t_{n+1}}|)\textbf{1}_{\Omega_3}\right].
\end{equation*}
Moreover, by Lemma \ref{lmm:ddtEfZ}, $\mathbb{E}f(|\tilde{Z}^h_t|)$ is non-increasing for $t \in [t_n,t_{n+1}]$, which implies for small $h$ 
\begin{equation*}
    \mathbb{E}\left[f(|Z^h_{t_{n+1}}|)\textbf{1}_{\Omega_3}\right] \leq \left( e^{-C_1'h} + \left(e^{C_2'h} - e^{-C_1'h} \right) \eta_3(h)\right)\mathbb{E}\left[f(|Z^h_{t_n}|)\textbf{1}_{\Omega_3}\right] \leq e^{-C h} \mathbb{E}\left[f(|Z^h_{t_n}|)\textbf{1}_{\Omega_3}\right].
\end{equation*}

Concluding the one-step contraction results in Case 1 -- Case 3, one has
\begin{equation*}
     \mathbb{E}\left[f(|Z^h_{t_{n}}|)\right]
 \le e^{-C nh} \mathbb{E}\left[f(|Z_0|)\right] =e^{-Cnh}W_f(\mu_0,\nu_0),
\end{equation*}
which implies
\begin{equation*}
     W_f (\mu^h_n, \nu^h_n) \leq e^{-Cnh} W_f(\mu_0, \nu_0).
\end{equation*}
Moreover, since $e^{-c_f R_f}r \leq f(r) \leq r$ for all $r \geq 0$, one obtains
\begin{equation*}
    W_1(\mu^h_n, \nu^h_n) \leq C_0e^{-Cnh} W_1(\mu_0, \nu_0),\quad C_0 := e^{c_f R_f}.
\end{equation*}

\end{proof}

\section{Extension to a long-time Wasserstein error bound}\label{sec:longtime}

Recall that we have established a (finite-time) error estimate of iLMC under the relative entropy. However, for a sampling algorithm, researchers tend to show great interest in studying its long-time behavior. In this section, we extend the finite-time relative entropy error bound to a uniform-in-time Wasserstein-1 error bound.
Our derivation relies on the three main facts:
\begin{enumerate}
    \item The already obtained results, including the (finite-time) relative entropy error bound in Theorem \ref{thm:errormain} and the Wasserstein-1 contraction result in Theorem \ref{thm:contraction} above; 
    \item The triangular inequality of $W_1$ distance; 
    \item The initial conditions in Assumption \ref{ass2} can be propagated along the Fokker-Planck equation for $\rho_t$ (see Proposition \ref{prop:propagation} below). 
\end{enumerate}


The detailed derivations are given below. For convenience, we have moved the proof for Proposition \ref{prop:propagation} to Appendix \ref{app:fp}, because most of the proof is identical to that of Proposition \ref{prop:nablalog} and derivations in \cite[Appendix A]{li2025ergodicity}.
 



\begin{proposition}\label{prop:propagation}
Suppose Assumptions \ref{ass0}, \ref{ass1}, \ref{ass2} hold.  Then for all $t\ge 0$
\begin{equation}
C_1'\exp(-C_2 |x|^{\ell_1}) \leq \rho_t(x) \leq C_3'\exp(-\gamma U(x)),\quad |\nabla \log \rho_t(x)| \leq C\left(1 + |x|^{\ell_0} \right).
\end{equation}
Here, the coefficients above are independent of $t$.
\end{proposition}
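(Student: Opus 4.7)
The plan is to prove the two-sided tail bound and the polynomial gradient bound for $\rho_t$ as uniform-in-time analogues of Lemma~\ref{lmm:logrho} and Proposition~\ref{prop:nablalog}. The strategic simplification over the iLMC analysis is that the Fokker--Planck equation~\eqref{eq:overdampedFP} for $\rho_t$ has time-independent coefficients, identity diffusion, and admits the Gibbs measure $\pi\propto e^{-U}$ as invariant measure; the new difficulty is that every constant must be independent of $t$, which forbids any Gr\"onwall factor $e^{CT}$ that was tolerated in Lemma~\ref{lmm:logrho}.

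For the upper tail $\rho_t\le C_3' e^{-\gamma U}$, I would set $v(t,x):=e^{\gamma U(x)}\rho_t(x)$, which by direct substitution into~\eqref{eq:overdampedFP} satisfies
\begin{equation*}
\partial_t v=\Delta v+(1-2\gamma)\nabla U\cdot\nabla v+(1-\gamma)\bigl[\Delta U-\gamma|\nabla U|^2\bigr]v.
\end{equation*}
Under Assumptions~\ref{ass0}--\ref{ass1}, the zeroth-order coefficient $c(x):=(1-\gamma)[\Delta U-\gamma|\nabla U|^2]$ is bounded above on $\mathbb{R}^d$ and tends to $-\infty$ as $|x|\to\infty$: the recursive bound $|\nabla^2 U|\le C(|\nabla U|+1)$ together with far-field strong convexity (which forces $|\nabla U|$ to grow at least linearly) ensures that $|\nabla U|^2$ dominates $\Delta U$ outside a large ball. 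Any space-time maximum of $v$ must therefore lie in a fixed compact set, and a stationary super-solution that matches a constant barrier inside a large ball $B(0,R_*)$ to a faster-than-$\pi$ decay outside dominates the initial value $v(0,\cdot)\le C_3$ and yields $v(t,\cdot)\le C_3'$ by parabolic comparison. For the lower tail, I would analogously use the stationary sub-solution $\underline{w}(x):=c_*e^{-C_2|x|^{\ell_1}}$; verifying $\nabla\cdot(\nabla U\,\underline{w})+\Delta\underline{w}\ge 0$ in the far field from the polynomial bounds in Assumption~\ref{ass1} uses precisely the margin $\ell_1\ge 3\ell+2$ in Assumption~\ref{ass2} to absorb the polynomial drift into the super-exponential tail, after which comparison with $\rho_0\ge C_1 e^{-C_2|x|^{\ell_1}}$ propagates the bound uniformly in $t$.

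With the two-sided tail in hand, the gradient estimate follows by re-running the Bernstein scheme of Proposition~\ref{prop:nablalog}, now with the clean time-independent coefficients $a=I,\,b=-\nabla U,\,c=-\Delta U$ in the Hamilton--Jacobi equation~\eqref{eq:HJ} for $u:=\log(\rho_t/M_0)$ with $M_0:=C_3'$. Applying Proposition~\ref{prop:Ag} to $g:=|\nabla u|^2/(1-u)^2$ yields $\mathcal{A}g\gtrsim g^2-\mathcal{P}(x)$; crucially, $a=I$ is here uniformly elliptic (whereas $\lambda(x)$ degenerated polynomially in Proposition~\ref{prop:nablalog}) and the coefficients carry no $t$-dependence, so the maximum-principle extraction produces a polynomial bound on $g$ that is genuinely uniform in $t$. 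The lower tail then converts the $1+|u|$ factor in~\eqref{eq:nablaucontroled} into a polynomial in $|x|$, and combining gives $|\nabla\log\rho_t(x)|\le C(1+|x|^{\ell_0})$ with $C$ independent of $t$.

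The main obstacle is the construction of the stationary super-solution in the upper-tail argument: the natural candidate proportional to $\pi$ (i.e.\ $e^{(\gamma-1)U}$) decays at infinity and so cannot by itself dominate the $O(1)$ initial value of $v$, while a pure constant is not a super-solution because $c(x)$ need not be non-positive on the compact set $B(0,R_*)$. Gluing two barriers at a carefully chosen radius and verifying the comparison inequality globally is the one genuinely new technical step; once that is done, everything else is a routine adaptation of the numerical-side arguments and of~\cite[Appendix~A]{li2025ergodicity}.
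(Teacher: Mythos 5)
Your proposed proof of the two\nobreakdash-sided tail bound takes a genuinely different route from the paper's. The paper transforms to $q_t := \rho_t e^{U}$, which solves the backward Kolmogorov equation $\partial_t q = -\nabla U\cdot\nabla q + \Delta q$, and then uses the probabilistic representation $q_t(x) = \mathbb{E}^x\bigl[q_0(X_t)\bigr]$ together with It\^o's formula: the upper bound follows from the differential inequality $\frac{d}{dt}\mathbb{E}\,e^{(1-\gamma)U(X_t)} \le -\tilde C\,\mathbb{E}\,e^{(1-\gamma)U(X_t)} + C$ and Gr\"onwall, and the lower bound from Markov's inequality and a uniform second moment estimate showing $\mathbb{P}\bigl(|X_t(x)|\le L(|x|+1)\bigr)\ge 1/2$. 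This Lyapunov/Feynman--Kac route has the advantage of never needing a comparison principle on an unbounded domain and of not requiring an explicit barrier function. Your PDE comparison route is natural and your sub\nobreakdash-solution $\underline w = c_* e^{-C_2|x|^{\ell_1}}$ for the lower tail does verify $\nabla\cdot(\nabla U\,\underline w)+\Delta\underline w\ge 0$ in the far field because of $\ell_1\ge 3\ell+2$; but the stationary super\nobreakdash-solution you propose for the upper tail does not actually exist. Any barrier $\bar w$ that decays at infinity must eventually drop below the level $C_3$ (which the initial data $v_0\le C_3$ can approach along a sequence), so it cannot dominate $v_0$ globally, while a constant barrier fails the super\nobreakdash-solution inequality on the compact set where $c(x)=(1-\gamma)[\Delta U - \gamma|\nabla U|^2]$ is positive; a gluing cannot fix this contradiction. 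What does work is a \emph{time-dependent} super\nobreakdash-solution of the form $A e^{(1-\gamma)U(x)} e^{-\kappa t} + B$ (in the $q$\nobreakdash-variable), whose verification is precisely the It\^o--Gr\"onwall computation recast in PDE language --- so once you repair the argument you are essentially forced back to the paper's Lyapunov mechanism, plus you would still owe a justification of the parabolic maximum principle on $\mathbb{R}^d$. For the gradient estimate your plan matches the paper's (re-run Bernstein with uniformly elliptic $a=I$); note the minor sign slip: matching \eqref{eq:abcovervoew} with $b_h\mapsto -\nabla U$ and $\Lambda_h\mapsto I$ gives $b=\nabla U$ and $c=\Delta U$, not $-\nabla U$ and $-\Delta U$, though this does not affect the Bernstein estimate since only $|b|,|c|,|\nabla b|,|\nabla c|$ enter.
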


The extension from a relative entropy bound to a $W_1$ bound requires the following mild condition, which is standard for the weighted Csiszar-Kullback-Pinsker inequality \cite{bolley2005weighted, mou2022improved}.
In detail, the weighted Csiszar-Kullback-Pinsker inequality says that if a probability
$\rho$  has the following tail behavior with a positive constant $a_0$
\begin{equation}\label{eq:subgaussian}
    a_0:=2 \inf_{\alpha>0}\left(\frac{1}{2 \alpha}\left(1+\log \int_{\mathbb{R}^d} e^{\alpha|x|^2} d \rho(x)\right)\right)^{\frac{1}{2}}<+\infty,
\end{equation}
then for any probability measure $\rho' \ll \rho$,
\begin{equation}\label{eq:aftertransport}
    W_1(\rho' , \rho) \leq a_0 \sqrt{\mathcal{H}(\rho' \| \rho)}.
\end{equation}
Clearly, in order for \eqref{eq:subgaussian} to hold uniformly with $\rho = \rho_t$ (recall that $\rho_t$ solves \eqref{eq:overdampedFP}), a sufficient condition is that its initial $\rho_0$ is SubGaussian (namely, there exists some $C>0$ such that $\mathbb{P}(|X_0|>a)\leq\exp(-a^2/C^2)$ for all $a \geq 0$). In fact, under some mild assumptions, it is easy to derive the equivalent characterization of the SubGaussian property of $\rho_t$: $\mathbb{E}[\exp(\alpha |X_t|^2)] \leq 2$ for $X_t$ solving the overdamped Langevin equation \eqref{eq:overdamped} and $\alpha > 0$. Clearly, this further means its law $\rho_t$ satisfies \eqref{eq:subgaussian} uniformly. Moreover, Assumption \ref{ass2} already means that $\rho_0$ is subGaussian since $U$ has a quadratic lower bound. We conclude the above result in the following lemma.
\begin{lemma}\label{lmm:trans}
Suppose the assumptions of Theorem \ref{thm:errormain} hold. Then there exists a positive constant $a_0$ independent of $t$ and $h$ such that for all $t \geq 0$,
\begin{equation}
    W_1(\rho_t^h , \rho_t) \leq a_0 \sqrt{\mathcal{H}(\rho^h_t \| \rho_t)}.
\end{equation}
\end{lemma}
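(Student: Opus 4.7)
The plan is to apply the weighted Csiszar-Kullback-Pinsker inequality \eqref{eq:aftertransport} directly with $\rho = \rho_t$ and $\rho' = \rho^h_t$. What must be verified is that the constant $a_0$ from \eqref{eq:subgaussian}, when applied to $\rho_t$, is finite and, crucially, can be chosen independent of $t \ge 0$. Once this is in place, the conclusion is immediate.

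The first step is to upgrade Assumption \ref{ass0} into a global quadratic lower bound for $U$. Indeed, since $\nabla^2 U \succeq mI$ on $\{|x|\geq R\}$ and $|\nabla^2 U|\le M$ on $B(0,R)$, integrating twice along radial rays starting from $\partial B(0,R)$, and using the normalization $U\ge 0$ assumed in Assumption \ref{ass0}, yields constants $c_1,c_2 > 0$ depending only on $m, M, R, d$ such that
\begin{equation*}
U(x) \;\ge\; c_1 |x|^2 - c_2, \qquad \forall x \in \mathbb{R}^d.
\end{equation*}
Combining this with Proposition \ref{prop:propagation}, which supplies the uniform upper bound $\rho_t(x) \leq C_3' \exp(-\gamma U(x))$ with $C_3'$, $\gamma$ independent of $t$, one obtains a uniform Gaussian tail
\begin{equation*}
\rho_t(x) \;\le\; C_3' e^{\gamma c_2} \exp(-\gamma c_1 |x|^2), \qquad \forall t\ge 0.
\end{equation*}

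The second step is to conclude. For any $\alpha \in (0,\gamma c_1)$, the integral $\int_{\mathbb{R}^d} e^{\alpha |x|^2}\rho_t(dx)$ is bounded by a constant independent of $t$; plugging this into the infimum in \eqref{eq:subgaussian} yields a finite $a_0$, again independent of $t$. Applying \eqref{eq:aftertransport} with $\rho = \rho_t$, $\rho' = \rho^h_t$ and this uniform $a_0$ gives the claim. The argument has no real obstacle: it is essentially a bookkeeping reduction to Proposition \ref{prop:propagation} plus a quadratic-lower-bound lemma for $U$. The only point worth checking with care is that the upper bound $\rho_t\le C_3'e^{-\gamma U}$ propagates uniformly in $t$ rather than deteriorating (as it would if one tried to use the transport/Fokker-Planck evolution of $\rho^h_t$ directly, where Theorem \ref{thm:errormain} is only finite-time); this is precisely why the argument is carried out against $\rho_t$ rather than $\rho^h_t$.
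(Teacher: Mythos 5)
Your proof is correct and follows the same overall strategy as the paper: reduce to the weighted Csiszar--Kullback--Pinsker inequality \eqref{eq:aftertransport} against the reference measure $\rho_t$, and verify that the subGaussianity constant $a_0$ of \eqref{eq:subgaussian} can be taken uniform in $t$. The one point of divergence is in how you establish the uniform tail control: the paper's (informal, in-text) argument invokes a moment-propagation fact for the SDE, namely that $\mathbb{E}[\exp(\alpha |X_t|^2)]\leq 2$ can be kept uniformly bounded along the overdamped Langevin flow --- a standard Lyapunov-type estimate using $e^{\alpha|x|^2}$ as test function --- together with the observation that Assumption \ref{ass2} makes $\rho_0$ subGaussian because $U$ has a quadratic lower bound. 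You instead route through the pointwise density bound $\rho_t(x)\le C_3' e^{-\gamma U(x)}$ from Proposition \ref{prop:propagation}, combined with the same quadratic lower bound $U(x)\ge c_1|x|^2 - c_2$ (which you derive cleanly from Assumption \ref{ass0} by integrating the Hessian bounds radially, bounding $|\nabla U|$ on $\partial B(0,R)$ by continuity). Both are valid; your version is more self-contained in the sense that it cites an already-established result of the paper rather than invoking a separate moment estimate, while the paper's version is lighter since it only needs a moment bound on $X_t$ rather than a pointwise density estimate. Either way the constant $a_0$ is manifestly independent of $t$ and of $h$, as claimed.
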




With the preparations above, we are then able to extend the error bound in Theorem \ref{thm:errormain} to a uniform-in-time one by combining the $W_1$ contraction result in Theorem \ref{thm:contraction}.

\begin{theorem}[Uniform-in-time Wasserstein error estimate for iLMC]\label{thm:longtime}
Suppose the assumptions of Theorem \ref{thm:errormain} hold. Then there exists a positive constant $C$ such that
\begin{equation}\label{eq:uniformintimeW1}
    \sup_{s \geq 0} W_1\left( \rho^h_s, \rho_s\right) \leq Ch.
\end{equation}
Consequently, the invariant measures $\pi$, $\pi^h$ satisfies that
\begin{equation}\label{eq:invariant}
    W_1(\pi^h, \pi) \leq Ch.
\end{equation}
\end{theorem}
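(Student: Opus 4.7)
The plan is a bootstrap over time windows of a fixed length $T$: within each window we invoke the finite-time relative entropy estimate of Theorem \ref{thm:errormain} together with the weighted Csiszar--Kullback--Pinsker inequality (Lemma \ref{lmm:trans}), and across windows we invoke the iLMC $W_1$-contraction of Theorem \ref{thm:contraction}. The key structural choice is to let the ``pivot'' at the start of each window be the Langevin law $\rho_{kT}$ rather than the iLMC law $\rho^h_{kT}$, because Proposition \ref{prop:propagation} furnishes uniform-in-$t$ polynomial gradient bounds and tail bounds for $\rho_t$, so Theorem \ref{thm:errormain} applies with constants independent of $k$ when restarted from $\rho_{kT}$.

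Pick $T>0$ large, and (for simplicity) assume $T/h\in\mathbb{N}$. For each $k$, let $\sigma^{(k)}_r$ denote the law at elapsed time $r\in[0,T]$ of iLMC started from $\rho_{kT}$. The pair $(\rho^h_{kT+r},\sigma^{(k)}_r)$ consists of two iLMC laws at the same elapsed time driven from different initial distributions ($\rho^h_{kT}$ vs.\ $\rho_{kT}$), so Theorem \ref{thm:contraction} gives
\begin{equation*}
W_1(\rho^h_{kT+r},\sigma^{(k)}_r)\;\le\;C_0 e^{-\kappa r}\,W_1(\rho^h_{kT},\rho_{kT}).
\end{equation*}
The pair $(\sigma^{(k)}_r,\rho_{kT+r})$ consists of the time-$r$ iLMC and SDE laws from the common initial $\rho_{kT}$; by Proposition \ref{prop:propagation} this $\rho_{kT}$ satisfies Assumption \ref{ass2} with constants uniform in $k$, so Theorem \ref{thm:errormain} gives $\mathcal{H}(\sigma^{(k)}_r\mid \rho_{kT+r})\le CT^{3+\epsilon}h^2$, and Lemma \ref{lmm:trans} (with $a_0$ uniform in $k$ by the uniform subGaussian tails) yields
\begin{equation*}
W_1(\sigma^{(k)}_r,\rho_{kT+r})\;\le\;\bar C\,T^{(3+\epsilon)/2}\,h.
\end{equation*}

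Setting $r=T$ and combining by the triangle inequality,
\begin{equation*}
W_1(\rho^h_{(k+1)T},\rho_{(k+1)T})\;\le\;C_0 e^{-\kappa T}\,W_1(\rho^h_{kT},\rho_{kT})\;+\;\bar C\,T^{(3+\epsilon)/2}h.
\end{equation*}
Choose $T$ so large that $C_0 e^{-\kappa T}\le 1/2$. Iterating from $W_1(\rho^h_0,\rho_0)=0$ gives a geometric sum bounded by $2\bar C\,T^{(3+\epsilon)/2}h=:Ch$, uniformly in $k$; substituting back into the triangle inequality for arbitrary $r\in[0,T]$ produces \eqref{eq:uniformintimeW1}. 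The bound \eqref{eq:invariant} then follows from
\begin{equation*}
W_1(\pi^h,\pi)\;\le\;W_1(\pi^h,\rho^h_s)+W_1(\rho^h_s,\rho_s)+W_1(\rho_s,\pi)
\end{equation*}
by sending $s\to\infty$: the outer terms vanish by the $W_1$-ergodicity of iLMC (Corollary after Theorem \ref{thm:contraction}) and of the Langevin SDE (standard Eberle-type contraction under the far-field convexity of Assumption \ref{ass0}), while the middle term is uniformly bounded by $Ch$.

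The main obstacle is making the constants in Theorem \ref{thm:errormain} and in the CKP transport Lemma \ref{lmm:trans} uniform across all windows, i.e.\ independent of the pivot $\rho_{kT}$; this is precisely what Proposition \ref{prop:propagation} delivers, and is why we pivot at $\rho_{kT}$ and not at $\rho^h_{kT}$ (whose propagated constants from Lemma \ref{lmm:logrho} and Proposition \ref{prop:nablalog} grow with time). A minor technicality is that Theorem \ref{thm:contraction} is stated on the grid $\{t_n\}$, but since $T$ can be taken a multiple of $h$ and the intra-window bound is already controlled uniformly in $r\in[0,T]$ through the entropy route, no additional work is needed.
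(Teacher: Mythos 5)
Your proposal is correct and matches the paper's own proof almost exactly: the same triangle-inequality decomposition pivoting on the iLMC law started from the exact Langevin density $\rho_{kT}$ (so that Proposition~\ref{prop:propagation} furnishes restart constants uniform in $k$), followed by the same combination of Theorem~\ref{thm:errormain} + Lemma~\ref{lmm:trans} on the common-start pair and Theorem~\ref{thm:contraction} on the different-start pair, then a geometric iteration. The only cosmetic differences are indexing (the paper uses $n_0=\lceil T_0/h\rceil$ steps rather than assuming $T/h\in\mathbb{N}$) and the citation used for the Langevin ergodicity in the last step (log-Sobolev in the paper vs.\ Eberle-type contraction in your proposal), both of which are valid under Assumption~\ref{ass0}.
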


\begin{proof}
We first establish the uniform-in-time estimate \eqref{eq:uniformintimeW1}.
Denote $\mathcal{S}^h$ the one-step Markov transition kernel of iLMC \eqref{eq:ilmciteration} with step size $h$, and $\mathcal{S}(t)$ the Markov transition kernel of the overdamped Langevin equation \eqref{eq:overdamped} over time $t$. From the local error analysis in relative entropy (Theorem \ref{thm:errormain}), by Lemma \ref{lmm:trans} due to the weighted Csiszar-Kullback-Pinsker inequality \cite{bolley2005weighted},  for any probability measure $\rho$ satisfying Assumption \ref{ass2}, one has
$$
W_1\left(\mathcal{S}(h)^n \rho,\left(\mathcal{S}^h\right)^n \rho\right) \leq a_0 \sqrt{\mathcal{H}\left(\mathcal{S}(h)^n \rho \mid \left(\mathcal{S}^h\right)^n \rho \right)} \leq C(T) h, \quad \text { for all } n h \leq T.
$$
By Theorem \ref{thm:contraction}, there exists $T_0>0$, $\gamma' \in (0,1)$ such that for any $n \geq T_0 / h$ and any probability measures $\mu, \nu$, one has
$$
W_1\left(\left(\mathcal{S}^h\right)^n \mu,\left(\mathcal{S}^h\right)^n \nu\right) \leq \gamma'W_1(\mu, \nu),
$$
Now, set $n_0=\left\lceil T_0 / h\right\rceil$ and take $n=k n_0, m=(k-1) n_0$. By Markov property and the triangular inequality, one has 
$$
\begin{aligned}
W_1\left(\mathcal{S}(h)^n \rho_0,\right. & \left.\left(\mathcal{S}^h\right)^n \rho_0\right) \\
& \leq W_1\left(\mathcal{S}(h)^{n-m} \rho_{t_m},\left(\mathcal{S}^h\right)^{n-m} \rho_{t_m}\right)+W_1\left(\left(\mathcal{S}^h\right)^{n-m} \rho_{t_m},\left(\mathcal{S}^h\right)^{n-m} \rho^h_{t_m}\right).
\end{aligned}
$$
Since $(n-m) h=n_0 h \leq T_0+h$, and since $\rho_{t_m}$ has uniform estimates by Proposition \ref{prop:propagation}, the first term on the right-hand side is bounded by $C\left(T_0\right) h$ and the constant $C\left(T_0\right)$ is independent of $k$. Moreover, by Theorem \ref{thm:contraction}, the second term is bounded by
$$
W_1\left(\left(\mathcal{S}^h\right)^{n-m} \rho_{t_m},\left(\mathcal{S}^h\right)^{n-m} \rho^h_{t_m}\right) \leq \gamma^{\prime} W_1\left(\rho_{t_m}, \rho^h_{t_m}\right)
$$
Hence,
$$
W_1\left(\mathcal{S}(h)^{k n_0} \rho_0,\left(\mathcal{S}^h\right)^{k n_0} \rho_0\right) \leq C h+\gamma^{\prime} W_1\left(\mathcal{S}(h)^{(k-1) n_0} \rho_0,\left(\mathcal{S}^h\right)^{(k-1) n_0} \rho_0\right),
$$
where $C$ is independent of $t$ and $h$. By iteration, this then establishes \eqref{eq:uniformintimeW1} for $n=k n_0$. For general $n$, one only needs to apply the finite-time error estimate again, starting from the nearest integer of $k n_0$.

Finally, note that under Assumption \ref{ass0}, $\pi$ satisfies a log-Sobolev inequality and thus it is well known that \cite{bakry2006diffusions, vempala2019rapid, chewi2024analysis} $\mathcal{S}(t)$ is also geometrically ergodic.  Letting $n \rightarrow \infty$ in \eqref{eq:uniformintimeW1}, the second claim \eqref{eq:invariant} for the invariant measures then follows.

\end{proof}

\section{Conclusion}\label{sec:conclusion}
The iLMC method is a robust approach for sampling from complex distributions with non-globally Lipschitz drift terms, where its explicit competitors usually fails. Its implicit structure enables stable behavior, and through continuous-time interpolation, one can derive meaningful estimates and guarantees such as ergodicity and uniform-in-time sampling error bounds. In this paper, we rigorously give a relative entropy error bound for iLMC, where a crucial gradient estimate for the logarithm numerical density is obtained via a sequence of PDE techniques, including Bernstein method for gradient estimate. We also give a novel framework to prove the geometric ergodicity of iLMC under Wasserstein-1 distance. Based on the relative entropy error bound and the Wasserstein ergodicity, we extend the error bound of iLMC to a uniform-in-time one.

We finally discuss some problems related to our results that still remain open. First, the relative entropy bound in Theorem \ref{thm:errormain} is not the most satisfactory, since the coefficient therein has an algebraic dependence on the time $T$. The reason is that under the current assumptions and techniques, the estimate for (upper and lower) bounds of the numerical density $\rho^h$ in Lemma \ref{lmm:logrho} depends on $T$ exponentially. It is temping and quite promising to seek more advanced methods to get rid of this dependency, so as to improve the relative entropy error bound. Second, the contraction result proved in this paper is for Wasserstein-1 distance only, and it is natural to ask whether one can obtain similar results in Wasserstein-p distances ($p \geq 1$). Although most related contraction result using the reflection coupling is limited to the Wasserstein-1 distance \cite{eberle2016reflection, eberle2019couplings, schuh2024global}, there does exist Wasserstein-p contraction results, where as a trade-off of larger $p$, the dependence of the initial is not tight (see \cite[Theorem 2.1]{wang2020exponential} and \cite[Theorem 1.3]{luo2016exponential}). While interesting, extending from Wasserstein-1 to Wasserstein-p is beyond the scope of this paper


\section*{Acknowledgments}
The work of L. Li was partially supported by the National Key R\&D Program of China, Project Number 2021YFA1002800, NSFC 12371400, and Shanghai Municipal Science and Technology Major Project 2021SHZDZX0102. This material is based in part upon work supported by the National Science Foundation under Grant No. DMS-2424139, while J.-G. L. was in residence at the Simons Laufer Mathematical Sciences Institute in Berkeley, California, during the Fall 2025 semester.

\appendix

\section{Omitted proofs for the relative entropy error estimate}\label{app:nablalogrho}

We first prove the uniform-in-time $p$-th moment bound stated in Proposition \ref{prop:explicitSDE}. Although the moment bound is well-established in literature, we remark that in most existing results, the bound is limited to a finite-time one or an $L^2$ one. To our knowledge, the proof given below is novel under the current assumption (Assumption \ref{ass0}).
\begin{proof}[Proof of claim \eqref{eq:Lpmomentbound} in Proposition \ref{prop:explicitSDE}] 
Fix $p \geq 2$. Denote $\tilde{X}^h_{t_n} = X^h_{t_n} + \sqrt{2}\,\Delta W_n$. It is easy to show that
\begin{equation}\label{eq:heatstability}
    \mathbb{E}\left[|\tilde{X}^h_{t_n} |^p \mid X^h_{t_n}\right] \leq (1 + \delta h) |X^h_{t_n} |^p + C \delta^{-1} h,\quad \forall \delta > 0,
\end{equation}
where $C$ is a positive constant independent of $h$ and $\delta$. Indeed, let $\tilde{X}_t := X^h_{t_n} + \int_{t_n}^t dW$ for $t \in [t_n,t_{n+1}]$. By It\^o's formula, it holds
\begin{equation*}
    \frac{d}{dt}\mathbb{E}\left[|\tilde{X}_t|^p\mid X^h_{t_n}\right] \leq \frac{1}{2}p(p-2+d)\mathbb{E}\left[|\tilde{X}_t|^{p-2}\mid X^h_{t_n}\right], 
\end{equation*}
\eqref{eq:heatstability} then holds due to Young's inequality and Gr\"onwall's inequality.

Now we consider the drift step.
Recall the definition of the map $\Phi_h$ in \eqref{eq:defphi}. Note that $0 = \Phi^{-1}_h(\Phi_h(0)) = \Phi^{-1}_h(h\nabla U(0))$. Without loss of generality, assume $0 \in \argmin_x U(x)$. Then by Proposition \ref{prop:PhiLip} (recall $R'$ therein), one has
\begin{equation*}
|X^{h}_{t_{n+1}}-0| \leq \left\{
\begin{aligned}
& e^{-\tfrac{m}{4} h}|\tilde{X}_{t_n} - 0|,\quad \text{if} \quad |\tilde{X}_{t_n}| > R',\\
& e^{2M h}R',\quad \text{otherwise}.
\end{aligned}
\right.
\end{equation*}
Hence,
\begin{equation}\label{eq:stabilityofphi}
    |X^h_{t_{n+1}}|^p \leq e^{2Mph} R'^p \vee e^{-\tfrac{m}{4} ph}|\tilde{X}_{t_n}|^p.
\end{equation}
Taking expectation and combining with \eqref{eq:heatstability}, choosing $\delta = \frac{m}{8}ph$, one has
\begin{equation}\label{eq:onestepiteration}
\begin{aligned}
    \mathbb{E}|X^h_{t_{n+1}}|^p &\leq \max\left((1-\frac{m}{4}ph)\left((1 + \delta h) \mathbb{E}|X^h_{t_n} |^p + C \delta^{-1} h\right) , e^{2Mph} R'^p\right) \\
    &\leq \max\left((1 - C'h) \mathbb{E}|X^h_{t_n} |^p +  C''h , C'''\right),
\end{aligned}
\end{equation}
where $C'$, $C''$, $C'''$ are positive constants independent of $h$ and $n$. Then one has by iteration that
\begin{equation*}
    \sup_{n \in \mathbb{N}} \mathbb{E}|X^h_{t_n}|^p < \infty.
\end{equation*}
Furthermore, performing the above estimates again, one knows that \eqref{eq:onestepiteration} still holds if replacing $t_{n+1}$ by $t$ ($\in [t_n,t_{n+1}]$), and $h$ by $t-t_n$. Therefore,
\begin{equation*}
    \sup_{t \geq 0} \mathbb{E}|X^h_{t_n}|^p < \infty.
\end{equation*}
    
\end{proof}

Next, we prove the upper and lower bounds for the numerical density $\rho^h$. The basic idea is to consider the time evolution of $\rho^h / \tilde{q}$ for some function $\tilde{q}: \mathbb{R}_{+} \times \mathbb{R}^d \rightarrow \mathbb{R}$ and then apply the maximal principle.

\begin{proof}[Proof of Lemma \ref{lmm:logrho}]
\quad

\textbf{1. Proof of the lower bound.}

Let $q := \rho^h / \tilde{q}$, where $\tilde{q}(t,x)$ is to be determined. We then derive the time evolution equation for $q$. In fact, since
\begin{equation*}
    \partial_t (q\tilde{q}) = - \partial_{i}(b_h^i q\tilde{q}) + \partial_{ij}(\Lambda_h^{ij}q\tilde{q}),
\end{equation*}
we have
\begin{equation*}
    \partial_t q = \Lambda_h^{ij} \partial_{ij}q - b_h^i \partial_{i}q + 2\partial_{j}(\Lambda_h^{ij}) \partial_{i}q + 2 \Lambda_h^{ij} \frac{\partial_{j}\tilde{q}}{\tilde{q}} \partial_{i}q + Fq,
\end{equation*}
where the function $F$ is defined by
\begin{equation*}
    F := -\frac{\partial_t \tilde{q}}{\tilde{q}} - \partial_{i}(b^i_n) - b_h^i \frac{\partial_{i}\tilde{q}}{\tilde{q}} + (\partial_{ij}\Lambda_h^{ij}) + 2\partial_{j}(\Lambda_h^{ij}) \frac{\partial_{i}\tilde{q}}{\tilde{q}} + \Lambda_h^{ij} \frac{\partial_{ij}\tilde{q}}{\tilde{q}}
\end{equation*}
Denote the equation above by 
\begin{equation*}
    \mathcal{L}q + Fq = 0,
\end{equation*}
Next, we will show that there exists $C_t > 0$ and $\ell_1$, $C_2$ in Assumption \ref{ass2},  for
\begin{equation*}
    \tilde{q}(t,x) = \exp\left(- C_t t - C_2|x|^{\ell_1}\right),
\end{equation*}
one has
\begin{equation*}
    F \geq 0.
\end{equation*}
Indeed, define $\lambda$ the smallest eigenvalue of $a$ and
\begin{equation*}
    \underline{\lambda} := \min_{x \in B(x^{*},1)}\lambda = \left(I + (t-t_n) \max_{x \in B(x^{*},1)}|\nabla^2 U(x)| \right)^{-2}.
\end{equation*}
For the $\tilde{q}$ of the above form, since $\Lambda_h^{ij}(x)$ is positive definite, one has
\begin{equation*}
\begin{aligned}
    F &\geq C_t - \left|\partial_{ij}(\Lambda_h^{ij}) - \partial_{i}(b_h^i) \right| - \left|2\partial_{j}(\Lambda_h^{ij}) - b_h^i \right| \left(C_2\ell_1 |x|^{\ell_1 - 1} \right) + \underline{\lambda}\frac{\partial_{ij}\tilde{q}}{\tilde{q}}\\
    &=C_t - \left|\partial_{ij}(\Lambda_h^{ij}) - \partial_{i}(b_h^i) \right| - \left|2\partial_{j}(\Lambda_h^{ij}) - b_h^i \right| \left(C_2\ell_1 |x|^{\ell_1 - 1} \right)\\
    &\quad+ \underline{\lambda}\left(C_2^2 \ell_1^2|x|^{2\ell_1 - 2} - C_2 \ell_1 (\ell_1 - 1) |x|^{\ell_1 - 2} \right)\\
    &=C_t - \left|\partial_{ij}(\Lambda_h^{ij}) - \partial_{i}(b_h^i) \right| - \left|2\partial_{j}(\Lambda_h^{ij}) - b_h^i \right| \left(C_2\ell_1 |x|^{\ell_1 - 1} \right) \\
    &\quad+ \underline{\lambda} C_2 \ell_1|x|^{\ell_1 - 2}\left(C_2 \ell_1|x|^{\ell_1} - (\ell_1 - 1) \right)
\end{aligned}
\end{equation*}
By Lemma \ref{lmm:abcpoly}, there exists $C>0$, $\ell \geq 1$ such that
\begin{equation*}
    \max\left\{\left|\partial_{ij}(\Lambda_h^{ij}) - \partial_{i}(b_h^i) \right|, \left|2\partial_{j}(\Lambda_h^{ij}) - b_h^i \right| \right\} \leq C(1 + |x|^\ell).
\end{equation*}
Also, by definition of $\Lambda_h$, denoting $\lambda_{\max}$ the largest eigenvalue of $\nabla^2 U(x)$, one has
\begin{equation*}
    \underline{\lambda}^{-1} = (1 + (t-t_n)\lambda_{\max})^2 \leq 2 + 2h^2C(1+|x|^{2\ell}).
\end{equation*}
Then,
\begin{equation*}
\begin{aligned}
    F &\geq C_t - C\left(1 + |x|^\ell\right)\left(1 + C_2 \ell_1 |x|^{\ell_1 - 1} \right)\\
    &\quad+  \underline{\lambda} C_2 \ell_1|x|^{\ell_1 - 2}\left(C_2 \ell_1|x|^{\ell_1} - (\ell_1 - 1) \right)
\end{aligned}
\end{equation*}
When $|x| \leq r_0 := \frac{\ell_1 - 1}{C_2\ell_1}$, recalling that $M(r) = \max_{B(0,r)}|\nabla^2 U(x)|$, then
\begin{equation}
\begin{aligned}
    F &\geq C_t - C(1 + r_0^\ell)(1 + C_2 \ell_1r_0^{\ell_1 - 1}) + 0 -(1-(t-t_n)M(r_0))^{-1}C_2 \ell_1 (\ell_1 - 1)r_0^{\ell_1 - 2}\\
    & =: C_t - A_0.
\end{aligned}
\end{equation}
When $|x| >r_0 $,
\begin{equation}
\begin{aligned}
    F &\ge C_t - C(r_0^{-\ell} + 1)(r_0^{-(\ell_1 - 1)} + 1)|x|^{\ell_1 + \ell - 1}\\
    &\quad+\left(2r_0^{-2\ell} + 2h^2 C(r_0^{-2\ell} + 1) \right)^{-1}C_2 \ell_1 |x|^{\ell_1 - 2 - 2\ell}\left(C_2 \ell_1|x|^{\ell_1} - (\ell_1 - 1) \right)\\
    &\quad =: C_t + A_1|x|^{2\ell_1 - 2\ell - 2} - A_2|x|^{\ell_1 - 2 - 2\ell} - A_3|x|^{\ell_1 + \ell - 1}.
\end{aligned}
\end{equation}
Above, $A_0, A_1, A_2, A_3 \in \mathbb{R}_{+}$. Clearly, since $\ell_1 \geq 3\ell + 2$, and using Young's inequality, one knows that there exists $A_6 > 0$ such that when $|x| > r_0$,
\begin{equation*}
    F \geq C_t - A_6.
\end{equation*}
Hence, choosing large $C_t$ such that $C_t \geq A_0 \vee A_6$ gives
\begin{equation*}
    F \geq 0.
\end{equation*}

Finally, since $\Lambda_h$ is positive definite for all $x$, by maximal principle, the minimum of $q(t,x)$ can only be achieved at $t=0$ (Otherwise, at the maximal point, $0 < \mathcal{L}(q) = -Fq \leq 0$). Clearly, $q|_{t=0} \geq C_1$ by Assumption \ref{ass2}. This then gives the desired lower bound, with the constant $C'_1$ possibly being time-dependent.

\textbf{2. Proof of the upper bound.} 

Similarly as we did when proving the lower bound, define $q := \rho^h / \tilde{q}$, where $\tilde{q}(t,x)$ is to be determined. Then
\begin{equation*}
    \partial_t q = \Lambda_h^{ij} \partial_{ij}q - b_h^i \partial_{i}q + 2\partial_{j}(\Lambda_h^{ij}) \partial_{i}q + 2 \Lambda_h^{ij} \frac{\partial_{j}\tilde{q}}{\tilde{q}} \partial_{i}q + Fq,
\end{equation*}
where the function $F$ is defined by
\begin{equation*}
    F := -\frac{\partial_t \tilde{q}}{\tilde{q}} - \partial_{i}(b^i_n) - b_h^i \frac{\partial_{i}\tilde{q}}{\tilde{q}} + \partial_{ij}(\Lambda_h^{ij}) + 2\partial_{j}(\Lambda_h^{ij}) \frac{\partial_{i}\tilde{q}}{\tilde{q}} + \Lambda_h^{ij} \frac{\partial_{ij}\tilde{q}}{\tilde{q}}.
\end{equation*}
Next, we will show that there exists $C'_t > 0$ and $\gamma$ in Assumption \ref{ass2},  for
\begin{equation*}
    \tilde{q}(t,x) = \exp\left( C'_t t - \gamma U(x)\right),
\end{equation*}
one has
\begin{equation*}
    F \leq 0.
\end{equation*}
In fact,  the leading term comes from $-b_h^i \frac{\tilde{q}_i}{\tilde{q}} + \Lambda_h^{ij} \frac{\tilde{q}_{ij}}{\tilde{q}}$. 
Recall the definition of $b_h$ and $\Lambda_h$ in \eqref{eq:bn}, \eqref{eq:Lambdan}. Then
\begin{equation*}
    -b_h^i \frac{\partial_{i}\tilde{q}}{\tilde{q}} + \Lambda_h^{ij} \frac{\partial_{ij}\tilde{q}}{\tilde{q}} = -\gamma(1-\gamma) \nabla U(x) \cdot\left(I + (s - t_n)\nabla^2 U(x)\right)^{-1} \cdot \nabla U(x) + r(s,x),
\end{equation*}
where
\begin{equation}\label{eq:rsxdef}
\begin{aligned}
    r(s,x)&=(s - t_n) \left(I + (s - t_n)\nabla^2 U(x)\right)^{-1} \left( \nabla^3 U(x) :  \left(I + (s - t_n)\nabla^2 U(x)\right)^{-2}\right) \\
    &\cdot (-\gamma\nabla U(x)) +  \left(I + (s - t_n)\nabla^2 U(x)\right)^{-2}: (\gamma \nabla^2 U(x)).
\end{aligned}
\end{equation}
For the leading term, clearly, for $\gamma \in (0,1)$,
\begin{equation*}
\begin{aligned}
    \quad-\gamma(1-\gamma)\nabla U(x)\cdot \left(I + (s - t_n)\nabla^2 U(x)\right)^{-1}  \cdot \nabla U(x) \leq -\frac{\gamma(1-\gamma)|\nabla U(x)|^2}{1 + (s-t_n) |\nabla^2 U(x)|}.
\end{aligned}
\end{equation*}
The remainder $r(s,x)$ is clearly uniformly bounded by $C(|\nabla U(x)| + 1)$ since $\nabla^k U \lesssim 1 + \nabla^{k-1} U$ for $k=2,3,4$, which is assumed in Assumption \ref{ass1}.
Similarly, for the other terms $- \partial_i(b^i_n) + \partial_{ij}(\Lambda_h^{ij}) + 2\partial_j(\Lambda_h^{ij}) \frac{\partial_i\tilde{q}}{\tilde{q}}$, it is easy to check that they are also upper-bounded by $C(|\nabla U(x)| + 1)$. Consequently, 
\begin{equation*}
\begin{aligned}
    F &\leq -C_t -\frac{\gamma(1-\gamma)|\nabla U(x)|^2}{1 + (s-t_n) |\nabla^2 U(x)|} + C(|\nabla U(x)| + 1)\\
    &\leq -C_t + \frac{-\gamma(1-\gamma)|\nabla U(x)|^2 + C\left(\nabla U(x) + 1\right)\left(1 + (s-t_n)C(|\nabla U(x)| + 1)\right)}{1 + (s-t_n) |\nabla^2 U(x)|}.
\end{aligned}
\end{equation*}
Hence, for small $h$ such that $\gamma(1-\gamma) > C^2 h$ above, there exists $\bar{C}>0$ independent of $s$, $n$, $h$ such that
\begin{equation*}
    F \leq -C_t + \frac{\bar{C}}{1 + (s-t_n) |\nabla^2 U(x)|} \leq -C_t + \bar{C}.
\end{equation*}
To conclude, $F \leq 0$ once we choose $\gamma \in (0,1)$ and $C_t \geq \bar{C}$ above.
The conclusion then holds due to the maximal principle similarly as in the proof of the lower bound.

\end{proof}

Next, we give the detailed proof of Proposition \ref{prop:nablalog}.

\begin{proof}[Proof of Proposition \ref{prop:nablalog}]
Fix $T>0$.  By Lemma \ref{lmm:logrho}, $\rho^h_t$ has an upper bound
\begin{equation*}
    M_0 := \exp(C_3'T).
\end{equation*}
Recall that we define $u$ by Cole-Hopf transformation:
$$u(t,x) := \log \frac{\rho^h_t(x)}{M_0}\leq 0.$$
Without loss of generality, below we assume $M=1$. Then, the simple calculations imply that $u$ satisfies the following Hamilton-Jacobi equation:
\begin{equation*}
    \partial_t u=a:\left(\nabla^2 u+\nabla u \otimes \nabla u\right)+b \cdot \nabla u+c.
\end{equation*}
where
\begin{equation}\label{eq:abc}
    a:= \Lambda_h,\quad b:= -b_h+\nabla \cdot \Lambda_h,\quad c:= -\nabla \cdot b_h+\nabla^2: \Lambda_h.
\end{equation}
Recall the definitions of $b_h$ and $\Lambda_h$ in \eqref{eq:bn}, \eqref{eq:Lambdan}. Lemma \ref{lmm:matrixbound} and Assumptions \ref{ass0} -- \ref{ass1} tell that $a$, $b$, $c$, and their first-order derivatives all have polynomial upper bounds with respect to $x$. Namely, there exists $C$ and $\ell$ such that
\begin{equation}\label{eq:polyboundabc}
    \max \{|a|, |b|, |c|, |\nabla a|, |\nabla b|, |\nabla c| \} \leq C(1 + |x|^\ell).
\end{equation}
We give a detailed derivation of \eqref{eq:polyboundabc} in Lemma \ref{lmm:abcpoly}.
Moreover, from Assumption \ref{ass0}, it is clear that the matrix $a(t,x)$ is globally positive definite, but does not have a uniform lower bound of the eigenvalue. Now, with the above properties, we are able to prove the polynomial upper bound for $\nabla u$ using a Bernstein-type method.

As mentioned in Section \ref{sec:nablalogrho}, we construct
\begin{equation*}
    g := \frac{|\nabla u|^2}{(1-u)^2},
\end{equation*}
and denote the nonnegative operator $\mathcal{A}$ by
\begin{equation*}
    \mathcal{A}(g) := a^{ij}\partial_{ij}g  - \partial_{t}g+ b^i \partial_{i}g - 3a^{ij}\frac{\partial_{j}u}{1-u}\partial_{i}g + 2a^{ij} \partial_{j}u \partial_{i}g.
\end{equation*}

Now, we fix $x^* \in \mathbb{R}^d$ and take a cut-off function $\tilde{\psi}(\cdot)$ defined on $[0,\infty)$ satisfying: (1) $\tilde{\psi}(r) > 0$ for $r \in [0,1)$; (2) $\mathrm{supp} \tilde{\psi} \in [0,1]$; (3) $\tilde{\psi}(r) = 1$ for $r \in [0,\frac{1}{2}]$; and (4) for any $\delta \in (0,1)$, there exists $C_\delta > 0$ such that for any $r \geq 0$,
\begin{equation}\label{eq:cutoffproperty}
    |\tilde{\psi}'| + |\tilde{\psi}''| \leq C_\delta \tilde{\psi}^{\delta}.
\end{equation}
Note that such cut-off function $\tilde{\psi}$ does exist, for instance $\tilde{\psi}(r) \sim \exp \left(-(1-r)^{-2}\right)$ as $ r \rightarrow 1^{-} $, see also \cite[Section 4]{du2024collision}, \cite[Section 2]{li2025ergodicity}. Then, we take $\psi(x - x^*) = \tilde{\psi}(|x-x^*|)$, which is a cut-off function on $\mathbb{R}^d$ that vanishes on $B(x^*, 1)^c$. Clearly,
\begin{equation*}
    \mathcal{A}(\psi g)=\psi \mathcal{A} (g)+g\mathcal{A}(\psi)+2 a^{i j} \partial_{j}\psi \partial_{i}g .
\end{equation*}
By \eqref{eq:cutoffproperty} and the fact that $|a| \leq 2$ (recall Lemma \ref{lmm:matrixbound}), one has
\begin{equation*}
    2 a^{i j} \partial_{j}\psi \partial_{i}g=2 a^{i j} \partial_{j}\psi \psi^{-1}\partial_{i}(\psi g)-2 a^{i j}\left(\partial_{i}\psi \partial_{j}\psi / \psi\right) g \geq 2 a^{i j} \partial_{j}\psi\psi^{-1}\partial_{i}(\psi g)-4C_{1/2} g .
\end{equation*}
Similarly, using Cauchy-Schwarz inequality, one has
\begin{equation*}
\begin{aligned}
g \mathcal{A}( \psi) & =g\left(\left(a^{i j} \partial_{ij}\psi+b^i \partial_{i}\psi\right)-3 a^{i j} \frac{\partial_{j}u}{1-u} \partial_{i}\psi+2 a^{i j} \partial_{j}u \partial_{i}\psi\right) \\
&\geq-6C_{1/2}g - C_{1/2}|b|g - 6|\nabla \psi| \sqrt{1-u} g^{3 / 2}.
\end{aligned}
\end{equation*}
By Young's inequality and \eqref{eq:cutoffproperty}, for any $\epsilon > 0$, 
\begin{equation}\label{eq:epsilontochoose}
    6|\nabla \psi| \sqrt{1-u} g^{3 / 2} \leq \epsilon \psi(1-u) g^2 + 9C^2_{1/2}\epsilon^{-1}g.
\end{equation}
For the term $\mathcal{A}(g)$, recall the crucial estimate in Proposition \ref{prop:Ag}. Recall that $\lambda$ denotes the smallest eigenvalue of $a$. Also recall that
\begin{equation*}
    \underline{\lambda} := \min_{x \in B(x^{*},1)}\lambda = \left(I + (t-t_n) \max_{x \in B(x^{*},1)}|\nabla^2 U(x)| \right)^{-2}.
\end{equation*}
Then,
\begin{equation*}
\begin{aligned}
\psi\mathcal{A} (g)\geq \psi\frac{\underline{\lambda}}{2} \frac{\left|\nabla^2 u\right|^2}{(1-u)^2}+\frac{\underline{\lambda}}{2}(1-u)\psi g^2  - M_1 (1-u)  (g+1),
\end{aligned}
\end{equation*}
where the function $M_1$ is defined by
\begin{equation*}
    M_1 := 2|c| + 2\underline{\lambda}^{-1}|\nabla a|^2 + 2|\nabla b| +  2|\nabla c|.
\end{equation*}
Defining the operator
\begin{equation*}
    \hat{\mathcal{A}}(g) := \mathcal{A}(g) - 2a^{ij}\psi_j \psi^{-1} g_i,
\end{equation*}
and concluding the estimates above (choosing $\epsilon = \underline{\lambda} / 4$ in \eqref{eq:epsilontochoose}), one has
\begin{equation*}
    \hat{\mathcal{A}}(\psi g) \geq \frac{\underline{\lambda}}{4}\psi (1-u) g^2 - M_2  (1-u) (g+1),
\end{equation*}
where the function $M_2$ is defined by
\begin{equation*}
    M_2 := M_1 + 9C^2_{1/2}\underline{\lambda}^{-1} + C_{1/2}|b| + 10C_{1/2}.
\end{equation*}

The desired result then follows by studying when the maximum of the function $\psi g(t,x)$ is achieved:
\begin{itemize}
\item \textbf{Case 1: $\psi g$ attains its maximum at $t = 0$.} Then for any fixed $x^*$,
\begin{multline*}
    g(t,x^*) = \psi g(t,x^*) \leq \max_{x \in B(x^*,1)}\psi g(0,x) \leq \max_{x \in B(x^*,1)} g(0,x)\\
    \leq \max_{x \in B(x^*,1)} |\nabla \log \rho_0(x)|^2 \leq \max_{x \in B(x^*,1)}C(1 + |x|^\ell) \leq C' (1 + |x^*|^\ell).
\end{multline*}
Here the constant $C'$ is independent of $x^*$, $h$, $T$ and $t$.
\item \textbf{Case 2: $\psi g$ attains its maximum in $(0,T] \times int(B(x^*,1))$.} Note that $\hat{\mathcal{A}}$ is a parabolic operator on $[0,T] \times B(x^*,1)$ since $a$ is locally positive definite. Denote the maximum point of $\psi g$ by $(t_1,x_1)$. Then,
\begin{equation*}
    0 \geq \hat{\mathcal{A}}(\psi g)(t_1,x_1) \geq \frac{\underline{\lambda}}{4}\psi (1-u) g^2 - M'  (1-u) (g+1) \Big|_{(t_1,x_1)},
\end{equation*}
which implies
\begin{equation*}
    \psi g(t_1 x_1) \leq 4M_2 \underline{\lambda}^{-1} \frac{g + 1}{g}\Big|_{(t_1,x_1)}.
\end{equation*}
If $g(t_1,x_1) \leq 1$, then $g(t,x^*) = \psi g(t,x^*) \leq \psi g(t_1,x_1) \leq 1$. Otherwise,
\begin{equation*}
     g(t,x^*) = \psi g(t,x^*)\psi g(t_1 x_1) \leq 8M_2 \underline{\lambda}^{-1}\Big|_{(t_1,x_1)}.
\end{equation*}
Clearly, by Assumption \ref{ass1}, $\underline{\lambda}^{-1}$ is upper bounded by some polynomial of $x^*$. Combining this with \eqref{eq:polyboundabc}, and since $|x_1 - x^*| \leq 1$, one has
\begin{equation*}
     |g(t,x^*)| \leq C' (1 + |x^*|^\ell).
\end{equation*}
Here, $C'$ is independent of $x^*$, $h$, $t$ and $T$.
\end{itemize}

Recalling that $\log M_0 \lesssim T$, one then has \eqref{eq:nablalogandlog}.
Moreover, by Lemma \ref{lmm:logrho}, \eqref{eq:nablalogrhopoly} holds.

\end{proof}

During the proof of Proposition \ref{prop:nablalog}, we also require the following result:
\begin{lemma}\label{lmm:abcpoly}
Recall the definitions of functions $a:\mathbb{R}^d \times \mathbb{R}_{+} \rightarrow \mathbb{R}^{d\times d}$, $b:\mathbb{R}^d \times \mathbb{R}_{+} \rightarrow \mathbb{R}^{d}$, $c:\mathbb{R}^d \times \mathbb{R}_{+} \rightarrow \mathbb{R}$ defined in \eqref{eq:abc}. Suppose Assumptions \ref{ass0}, \ref{ass1} hold. There exists $\ell \geq 1$ defined in Assumption \ref{ass1} and $C>0$ independent of $s$, $h$  such that for any $x \in \mathbb{R}^d$
\begin{equation}\label{eq:maxabc}
    \max \{|a|, |b|, |c|, |\nabla a|, |\nabla b|, |\nabla c| \} \leq C(1 + |x|^\ell).
\end{equation}
\end{lemma}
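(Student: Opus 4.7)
The plan is to reduce the bound \eqref{eq:maxabc} to a finite list of terms, each of which is a product of (i) a uniformly bounded matrix factor coming from an inverse of $I+(s-t_n)\nabla^2U$, and (ii) a single derivative $\nabla^k U$ with $1\le k\le 5$, which is polynomially bounded by Assumption \ref{ass1}. The key identity I would use repeatedly is the formula for the derivative of a matrix inverse,
\begin{equation*}
\partial_i\bigl(I+(s-t_n)\nabla^2 U(x)\bigr)^{-1}=-(s-t_n)\bigl(I+(s-t_n)\nabla^2 U(x)\bigr)^{-1}\,\partial_i(\nabla^2 U)(x)\,\bigl(I+(s-t_n)\nabla^2 U(x)\bigr)^{-1},
\end{equation*}
together with Lemma \ref{lmm:matrixbound}, which guarantees that for $h<1/(2M)$ the matrix $A_h(x):=(I+(s-t_n)\nabla^2 U(x))^{-1}$ satisfies $|A_h|\le e^{2Mh}\le C$ uniformly in $s$ and $x$.

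Step by step: first I would bound $|a|=|\Lambda_h|=|A_h^2|\le C$, which trivially fits into \eqref{eq:maxabc}. Next, using the explicit formula for $b_h$ in \eqref{eq:bn} together with Assumption \ref{ass1}, each of the two summands is a product of bounded matrices $A_h$ with either $\nabla U$ or $h\,\nabla^3 U$, so $|b_h|\le C(1+|x|^{\ell})$. Applying the matrix-inverse differentiation identity to $\Lambda_h = A_h^2$ gives $\nabla \Lambda_h$ as a finite sum of products of bounded $A_h$'s with $(s-t_n)\nabla^3 U$; hence $|\nabla \Lambda_h|\le Ch(1+|x|^{\ell})$ and this yields both $|\nabla\cdot\Lambda_h|\le C(1+|x|^\ell)$ (contributing to $b$) and $|\nabla a|\le C(1+|x|^\ell)$. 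Differentiating once more similarly, $\nabla^2 \Lambda_h$ is a sum of terms containing at most $\nabla^4 U$ and bounded matrix factors, giving $|\nabla^2:\Lambda_h|\le C(1+|x|^\ell)$; the analogous computation on $b_h$ produces $|\nabla\cdot b_h|\le C(1+|x|^\ell)$ using derivatives up to $\nabla^4 U$. Combining these controls $|c|$.

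Finally, for $|\nabla b|$ and $|\nabla c|$ I would differentiate once more, which introduces derivatives up to $\nabla^5 U$; this is precisely the reason Assumption \ref{ass1} requires $U\in C^5$ with polynomial bounds up to order five. At every stage the resulting expression is a finite linear combination of products of bounded $A_h$-factors, at most one $\nabla^k U$-factor with $k\le 5$, and possibly a factor of $h$, so the polynomial bound $C(1+|x|^{\ell})$ propagates.

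The main obstacle is not conceptual but organizational: carefully tracking all the terms produced by successive applications of the matrix-inverse derivative formula and checking that no term requires a derivative of $U$ of order greater than five or a product of two polynomially growing factors that would inflate the exponent $\ell$. The second part is handled cleanly because Assumption \ref{ass1} provides the recursion $|\nabla^k U|\le C(|\nabla^{k-1}U|+1)$ for $k=2,3,4$, which means I never need to multiply two polynomially growing derivatives together — at most one $\nabla^k U$ appears per term, and the rest are uniformly bounded matrices.
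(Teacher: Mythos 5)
Your argument takes essentially the same route as the paper's: write out $a,b,c$ and their gradients term by term, control every factor of the form $(I+(s-t_n)\nabla^2U)^{-1}$ uniformly via Lemma~\ref{lmm:matrixbound}, and let the remaining $\nabla^k U$ factors supply the polynomial growth via Assumption~\ref{ass1}. The paper's proof is even terser than yours but relies on exactly the same ingredients (the uniform bound on the inverse, the polynomial bounds on $\nabla^k U$, and the recursion $|\nabla^k U|\le C(|\nabla^{k-1}U|+1)$).

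One imprecision worth flagging: your statement that ``at most one $\nabla^k U$ appears per term'' is not literally true once you hit second derivatives. Differentiating $\Lambda_h=(I+(s-t_n)\nabla^2U)^{-2}$ twice, the Leibniz rule produces terms of the form $(s-t_n)^2 A^{-1}(\partial_j\nabla^2U)A^{-1}(\partial_k\nabla^2U)A^{-2}$, i.e.\ \emph{two} copies of $\nabla^3U$, and similarly $\nabla b$ and $\nabla c$ have products of up to three lower-order derivatives. If one bounds each factor crudely by $C(1+|x|^\ell)$ the exponent would inflate to $2\ell$ or $3\ell$; what saves the single exponent $\ell$ is not that such products fail to appear but rather that each factor $(s-t_n)\nabla^3U$ (or $(s-t_n)\nabla^4U$) must be paired with an adjacent $A^{-1}$ and contracted using the recursion \emph{before} squaring. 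This is the content of the paper's claim $|\nabla\Lambda_h|\vee|\nabla^2\Lambda_h|\le C$, which it also leaves at the level of an assertion. Your proof would be tighter if you made this pairing mechanism explicit rather than stating it as an absence of multiple $\nabla^k U$ factors.
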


\begin{proof}
Recall the definitions,
\begin{equation*}
    a= \Lambda_h,\quad b= -b_h+\nabla \cdot \Lambda_h,\quad c= -\nabla \cdot b_h+\nabla^2: \Lambda_h,
\end{equation*}
where 
\begin{multline*}
    b_h(s, x) :=  -\left(I + (s - t_n)\nabla^2 U(x)\right)^{-1} \nabla U(x) \\
          -  (s - t_n) \left(I + (s - t_n)\nabla^2 U(x)\right)^{-1} \left( \nabla^3 U(x) :  \left(I + (s - t_n)\nabla^2 U(x)\right)^{-2}\right),
\end{multline*}
and 
\begin{equation*}
    \Lambda_h(s,x) := \left(I + (s - t_n)\nabla^2 U(x)\right)^{-2}.
\end{equation*}
Clearly, under Assumption \ref{ass0}, by Lemma \ref{lmm:matrixbound}, for $h < \log 2 / (2M)$, $|\Lambda_h| \leq 2$, and $|\nabla \Lambda_h| \vee |\nabla^2 \Lambda_h| \leq C$ since $\nabla^k U \leq C(|\nabla^{k-1} U + 1)$ for $k =2,3,4$ as we assumed in Assumption \ref{ass1}. Similarly, since $\nabla U$ and $\nabla^2 U$ has polynomial upper bounds, one has $|b_h| \vee |\nabla b| \leq C(1 + |x|^\ell)$ for the $\ell$ in Assumption \ref{ass1}. Note that the positive constant $C$ above is independent of $s$, $n$ and $h$. The claim \eqref{eq:maxabc} then follows.

\end{proof}

\section{Technical Lemmas used in Section \ref{sec:ergodicity}}\label{app:ergodicitylemma}


We first prove the subGaussian property stated in Lemma \ref{lmm:subgaussian}. We also refer the readers to \cite[Lemma 3.2]{li2024geometric} for a similar proof. Recall that the process $\zeta_t$ is defined by 
\begin{equation}\label{eq:zeta_def}
    \zeta_t := \int_{{t_n} \wedge \tau}^{t \wedge \tau} \frac{(\tilde{Z}^h_s)^{\otimes 2}}{|\tilde{Z}^h_s|^2} \cdot dW_s, \quad t\in[t_n,t_{n+1}].
\end{equation}


\begin{proof}[Proof of Lemma \ref{lmm:subgaussian}]
Fix $t_n \leq s \leq t \leq t_{n+1}$. We prove the subgaussian property via the well-known $\psi_2$-condition \cite{vershynin2018high}: there exists $\alpha > 0$ such that
\begin{equation}\label{eq:psi2}
    \mathbb{E}\left[e^{\alpha|\theta|^2}\mid \mathcal{F}_{t_n}\right] \leq 2,
\end{equation}
where we denote $\theta := \zeta_t - \zeta_s$ and $\mathcal{F}_{t_n}$ the $\sigma$-algebra generated by $(X^h_s, Y^h_s, s\leq t_n)$. Clearly, $\zeta_t$ is a martingale by optional stopping theorem \cite{durrett2018stochastic}, and its quadratic variation satisfies
$\langle \zeta_t\rangle\le h$. Then it holds by the Burkholder-Davis-Gundy (BDG) inequality that for $\alpha > 0$,
\begin{multline}\label{eq:taylorbdg}
    \mathbb{E}\left[e^{\alpha |\theta_t|^2}\mid \mathcal{F}_{t_n}\right] = 1 + \sum_{p=1}^{+\infty} \frac{1}{p!} \alpha^p \mathbb{E}\left[|\theta|^{2p}\mid \mathcal{F}_{t_n}\right]\\
    \leq 1 + \sum_{p=1}^{+\infty} \frac{1}{p!}\alpha^p C_{2p} \mathbb{E}\left[\langle \theta \rangle_{t_{n+1}}^p\mid \mathcal{F}_{t_n}\right] \leq 1 + \sum_{p=1}^{+\infty} \frac{1}{p!} C_{2p} \left(h \alpha\right)^p,
\end{multline}
where $C_{2p}$  is a positive constant satisfying:
\begin{equation}\label{claim:Cq}
    C_{2p} \le (C \sqrt{2p})^{2p},
\end{equation}
and $C$ is a universal positive constant.  Combining \eqref{eq:taylorbdg} and \eqref{claim:Cq}, one has
\begin{equation*}
    \mathbb{E}\left[e^{\alpha |\theta_t^{\tau_j}|^2}\Big| \mathcal{F}_{t_n}\right] \leq 1 + C\sum_{p=1}^{+\infty} \frac{p^p}{p!}\left(2h \alpha \right)^p.
\end{equation*}
Clearly, $\frac{p^p}{p!} \leq e^p p^{-\frac{1}{2}} \leq e^p$, which can be derived from an intermediate result in the proof of Stirling's formula \cite{rudin1976principles}: $\log p! > \left(p+\frac{1}{2}\right)\log p - p$. Therefore,
\begin{equation*}
    \mathbb{E}\left[e^{\alpha |\theta_t^{\tau_j}|^2}\Big| \mathcal{F}_{t_n}\right] \leq 1+ C \sum_{p=1}^{+\infty} \left(2eh \alpha\right)^p = 1 + C\frac{2eh \alpha}{1 - 2eh \alpha} =  2,
\end{equation*}
by choosing $\alpha = \frac{1}{2e(1+C)h }=: \bar{c} h^{-1}$. Therefore, the $\psi_2$ condition \eqref{eq:psi2} holds.
Finally, using Chernoff's bound \cite{vershynin2018high}, for any $a>0$, it holds that
\begin{equation}\label{eq:chernoff}
    \mathbb{P}\left(|\theta_t^{\tau_j}| > a\Big| \mathcal{F}_{t_n}\right) \leq \mathbb{E}\left[e^{\alpha |\theta_t^{\tau_j}|^2}\Big| \mathcal{F}_{t_n}\right] / e^{\alpha a^2} \leq 2 e^{-\bar{c}h^{-1} a^2}.
\end{equation}
\end{proof}

\begin{lemma}\label{lmm:technical1}
Let $0<a<b \leq R_f$ and $ C a^2 / h>4 \log 8$. Fix $t \in [t_n,t_{n+1}]$.  Define the events
\begin{equation*}
    A := \left\{\left|Z^h_{t_n}\right| \leq a \right\},
\end{equation*}
\begin{equation*}
    A' := \left\{\exists s \in\left[t_n, t\right],\left|\tilde{Z}^h_s\right|=b\right\}.
\end{equation*}
Then, one has
\begin{equation}\label{eq:app1claim1}
    \mathbb{E}\left[\textbf{1}_A \textbf{1}_{A'} \textbf{1}_{t<\tau}\right] \leq \eta_1(a, b, h) \mathbb{E}\left[\textbf{1}_A \textbf{1}_{t<\tau}\right],
\end{equation}
where
\begin{equation*}
    \eta_1(a,b,h) :=4 \exp \left(-\frac{C (b-a)^2}{h}\right)
\end{equation*}
Moreover, 
\begin{equation}\label{eq:app1claim2}
    \mathbb{E}\left[\textbf{1}_A\left|\tilde{Z}^h_t\right| \textbf{1}_{\{\left|\tilde{Z}^h_t\right| \geq b\}}\right] \leq \eta_2(a, b, h) \mathbb{E}\left[\textbf{1}_A\left|\tilde{Z}^h_t\right|\right],
\end{equation}
where 
\begin{equation*}
    \eta_2(a,b,h):=\frac{6}{a}\left[b+\frac{h}{C (b-a)}\right] \exp \left(-\frac{C (b-a)^2}{2 h}\right)
\end{equation*}
\end{lemma}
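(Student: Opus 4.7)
The plan is to reduce both estimates to a one-dimensional Brownian motion analysis, exploiting the fact that before the coupling time $\tau$ the reflection coupling forces $\tilde{Z}^h_s - Z^h_{t_n} = 2\sqrt{2}\,\zeta_s$ to move along a single direction. Indeed $d\tilde{Z}^h_s = 2\sqrt{2}\,e_s(e_s\cdot dW_s)$ with $e_s := \tilde{Z}^h_s/|\tilde{Z}^h_s|$, and a direct computation shows that $e_s$ stays parallel (up to sign) to $e_{t_n}$ up to time $\tau$. Consequently, the radial process $r_s := |\tilde{Z}^h_s|$ is, before $\tau$, a one-dimensional Brownian motion with diffusion parameter $\sigma = 2\sqrt{2}$ started at $r_0 := |Z^h_{t_n}|$, and $\tau$ is its first hitting time of zero. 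All probabilities and expectations in the lemma thus become explicit functionals of this 1D BM, for which hitting-time distributions and the killed transition density are classical.

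For the first claim, I would condition on $\mathcal{F}_{t_n}$ with $r_0 \in (0,a]$ and bound $\mathbb{P}(A'\cap\{t<\tau\}\mid r_0)$ by two complementary estimates: the reflection-principle bound $\mathbb{P}(\sigma_b \le t - t_n \mid r_0) = \mathrm{erfc}((b-r_0)/(4\sqrt{t-t_n})) \le 2\exp(-(b-a)^2/(16h))$, which is essentially Lemma \ref{lmm:subgaussian} applied to the running maximum via Doob's inequality, together with the Gambler's ruin inequality $\mathbb{P}(\sigma_b < \sigma_0 \mid r_0) \le r_0/b$. The denominator $\mathbb{P}(\{t<\tau\}\mid r_0) = \mathrm{erf}(r_0/(4\sqrt{t-t_n}))$ is then handled in two regimes. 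On $\{r_0 \ge a/2\}$ the denominator is bounded below by a universal constant (here the quantitative assumption $Ca^2/h > 4\log 8$ enters, forcing $a \gtrsim \sqrt{h}$), and the subGaussian bound produces the desired exponential ratio. On $\{r_0 < a/2\}$ the denominator behaves like $r_0/\sqrt{h}$, while the Gambler's ruin factor $r_0/b$ cancels the vanishing $r_0$ and leaves exponential smallness coming from $(b-a)^2/h \ge Ca^2/h$. Integrating against the law of $|Z^h_{t_n}|$ on $A$ yields Claim 1.

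For the second claim, I first observe that optional stopping applied to the martingale $r_s$ at $\tau$ collapses the denominator: $\mathbb{E}[\mathbf{1}_A|\tilde{Z}^h_t|] = \mathbb{E}[\mathbf{1}_A\,r_{t\wedge\tau}] = \mathbb{E}[\mathbf{1}_A\,r_0]$. For the numerator I would use the explicit killed Brownian density $p_K(t-t_n,r_0,x) = (16\pi h)^{-1/2}(e^{-(x-r_0)^2/(16h)} - e^{-(x+r_0)^2/(16h)})$ together with the elementary inequality $\sinh y \le y\cosh y$ for $y\ge 0$ to obtain the pointwise linearization
\begin{equation*}
\mathbb{E}\bigl[|\tilde{Z}^h_t|\mathbf{1}_{\{|\tilde{Z}^h_t|\ge b\}}\bigm| r_0\bigr] \;\le\; \frac{r_0}{8h\sqrt{4\pi h}}\int_b^\infty x^2\, e^{-(x-r_0)^2/(16h)}\,dx.
\end{equation*}
A single integration by parts reduces the right-hand side to a combination of Gaussian tails giving a bound of the form $r_0\cdot(\mathrm{const}/\sqrt{h})[\,b + h/(b-a)\,]\exp(-(b-a)^2/(16h))$. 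Because this estimate is linear in $r_0$, dividing by $\mathbb{E}[\mathbf{1}_A\,r_0]$ removes the $r_0$-dependence and, after using $r_0 \le a$, yields a multiplicative factor matching $\eta_2(a,b,h)$.

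The main technical obstacle is the uniform-in-$r_0$ control in Claim 1 on the low-$r_0$ region: a naive subGaussian bound of the numerator is insensitive to $r_0$, while the denominator $\mathrm{erf}(r_0/(4\sqrt{t-t_n}))$ degenerates linearly as $r_0\to 0$, so the ratio would blow up without a compensating factor. The two-regime split at $r_0 = a/2$, with the transition calibrated by $Ca^2/h > 4\log 8$, is precisely what produces a uniformly small exponential ratio; this is the delicate piece of the argument and is the reason for the quantitative hypothesis on $a$.
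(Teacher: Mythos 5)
Your observation that the reflection coupling forces $\tilde{Z}^h_s$ to move purely radially before $\tau$, so that $r_s := |\tilde{Z}^h_s|$ is a driftless one-dimensional diffusion with $dr_s = 2\sqrt{2}\,dB_s$ stopped at its first zero, is correct and is a genuinely different starting point from the paper. The paper never exploits this reduction; instead it introduces an auxiliary event $E := \{\exists s\in[t_n,t]:|\tilde{Z}^h_s|=a\}\cap A$, conditions on $E$, and applies its abstract subGaussian estimate (Lemma \ref{lmm:subgaussian}) through a strong-Markov step at the hitting time of level $a$. Your use of optional stopping to collapse the denominator of \eqref{eq:app1claim2} to $\mathbb{E}[\mathbf{1}_A\,r_0]$ and the killed transition density to linearize the numerator in $r_0$ is a clean and valid route for the second claim, and would replace the paper's tail-integral manipulations.

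However, there is a genuine gap in your treatment of \eqref{eq:app1claim1} in the low-$r_0$ regime, and the fix is precisely the strong-Markov idea that the paper uses. On $\{r_0 < a/2\}$ you bound the numerator by the Gambler's-ruin probability $\mathbb{P}(\sigma_b<\sigma_0\mid r_0)\le r_0/b$ and the denominator by $\mathrm{erf}(r_0/(4\sqrt{t-t_n}))\gtrsim r_0/\sqrt{h}$. Cancelling $r_0$ then leaves a ratio of order $\sqrt{h}/b$, which is \emph{not} exponentially small in $(b-a)^2/h$: under the standing hypothesis $Ca^2/h>4\log 8$ one only has $a\gtrsim\sqrt{h}$ (not $a\gg\sqrt{h}$), so $\sqrt{h}/b\le \sqrt{h}/a$ is a constant of order one, while $\eta_1(a,b,h)=4e^{-C(b-a)^2/h}$ can be as small as $4/8^4$ when $b=2a$. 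The Gambler's ruin and the Gaussian tail are two competing bounds on the same probability $\mathbb{P}(\sigma_b\le t,\sigma_b<\sigma_0\mid r_0)$; taking the minimum of the two does not give a bound that is simultaneously linear in $r_0$ and exponentially small. To get both factors at once you must run the estimate through the first hitting time of an intermediate level: $\mathbb{P}(\sigma_b\le t,\sigma_b<\sigma_0\mid r_0)\le \mathbb{P}(\sigma_a<\sigma_0\mid r_0)\sup_{s}\mathbb{P}(\sigma_b\le t\mid r_s=a)\le (r_0/a)\cdot\mathrm{erfc}((b-a)/(4\sqrt{h}))$, which produces the $r_0$-cancellation and the exponential $e^{-(b-a)^2/(16h)}$ simultaneously. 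This is exactly the role the paper's event $E$ plays. Your write-up flags the low-$r_0$ region as the delicate point, but the two-regime split at $r_0=a/2$ that you propose does not, by itself, resolve it; the strong-Markov factorization is the missing ingredient.

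A lesser issue: your $\mathrm{erf}/\mathrm{erfc}$ constants (effectively $C=1/16$ in the exponent, after scaling by $\sigma=2\sqrt 2$) would need to be reconciled with the paper's abstract constant $C$ from Lemma \ref{lmm:subgaussian} that appears in the definitions of $\eta_1,\eta_2$ and in the hypothesis $Ca^2/h>4\log 8$; otherwise your concluding bound does not literally match the statement. This is routine once the strong-Markov gap is filled.
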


\begin{proof}

Define
$$E:=\left\{\exists s \in\left[t_n, t\right],\left|Z_s\right|=a\right\} \cap A, \quad B:=\{t<\tau\}.$$

We first prove \eqref{eq:app1claim1}. Clearly, the event $A'$ must be contained in $E$. In what follows, we will actually show that
$$
\mathbb{E}\left[\tone_{A \cap E} \tone_{A'} \tone_{\{t<\tau\}}\right] \leq \eta_1(a, b, h) \mathbb{E}\left[\tone_{A \cap E} \tone_{\{t<\tau\}}\right] .
$$
Our main idea is that when $E$ happens, the probability for $\{t<\tau\}$ is large. In fact,
$$
\mathbb{P}(A \cap E, A', t<\tau)=\mathbb{P}(A \cap E, t<\tau) \mathbb{P}(A' \mid A \cap E, t<\tau).
$$
For the latter,
$$
\mathbb{P}(F \mid A \cap E, t<\tau) \leq \frac{\mathbb{P}(A', A \cap E)}{\mathbb{P}(A \cap E)-\mathbb{P}(A \cap E, \tau \leq t)}
$$
Meanwhile,
$$
\mathbb{P}(A, E, t \geq \tau)=\mathbb{P}(A, E) \int_{t_n}^t \mathbb{P}\left(t \geq \tau \mid | \tilde{Z}^h_s |=a, A\right) \nu_{A, E}(d s),
$$
where $\nu_{A, E}(\cdot)$ is the conditional law for the first hitting time of $a$ for $|\tilde{Z}^h|$ with $\int_{t_n}^t \nu_{A, E}(d s)=$ 1. Clearly, by Lemma \ref{lmm:subgaussian},
$$
\mathbb{P}\left(t \geq \tau \mid | \tilde{Z}^h_s |=a, A\right) \leq \mathbb{P}\left(\sup _{s \leq t^{\prime} \leq t \wedge \tau} 2 \sqrt{2 }\left|\zeta_{t^{\prime}}-\zeta_s\right| \geq a \mid | \tilde{Z}^h_s |a, A\right) \leq 2 \exp \left(-\frac{C  a^2}{h}\right)<\frac{2}{8^4}.
$$
Hence, 
$$
\mathbb{P}(A' \mid A \cap E, t<\tau) \leq 2 \mathbb{P}(A' \mid A \cap E) \leq 4 \exp \left(-\frac{C (b-a)^2}{h}\right)=: \eta_1.
$$

The proof of \eqref{eq:app1claim2} uses the similar idea. Note that $\left\{\left|\tilde{Z}^h_t\right|>b\right\}$ must be contained in $E$. We will then actually show that
$$
\mathbb{E}\left[\tone_{A \cap E}\left|\tilde{Z}^h_t\right| \tone_{\left|\tilde{Z}^h_t\right| \geq b}\right] \leq \eta_2(a, b, h) \mathbb{E}\left[\tone_{A \cap E}\left|\tilde{Z}^h_t\right|\right].
$$
Clearly, the followings hold:
$$
\begin{aligned}
& \mathbb{E}\left[\tone_{A \cap E}\left|\tilde{Z}^h_t\right| \tone_{\left|\tilde{Z}^h_t\right| \geq b}\right]=b \mathbb{P}\left(\left|\tilde{Z}^h_t\right| \geq b, A, E\right)+\int_b^{\infty} \mathbb{P}\left(\left|\tilde{Z}^h_t\right| \geq r, A, E\right) d r, \\
& \mathbb{E}\left[\tone_{A \cap E}\left|\tilde{Z}^h_t\right|\right]=\int_0^{\infty} \mathbb{P}\left(\left|\tilde{Z}^h_t\right| \geq r, A, E\right) d r.
\end{aligned}
$$
Hence, it suffices to show that
$$
b \mathbb{P}\left(\left|\tilde{Z}^h_t\right| \geq b \mid A, E\right)+\int_b^{\infty} \mathbb{P}\left(\left|\tilde{Z}^h_t\right| \geq r \mid A, E\right) d r \leq \eta_2 \int_0^{\infty} \mathbb{P}\left(\left|\tilde{Z}^h_t\right| \geq r \mid A, E\right) d r.
$$
Intuitively, $\mathbb{P}\left(\left|\tilde{Z}^h_t\right| \geq r \mid A, E\right)$ is small for $r \geq b$, and is almost $1$ if $r \leq a / 2$. In detail,
$$
\mathbb{P}\left(\left|\tilde{Z}^h_t\right| \geq r \mid A, E\right)=\int_{t_n}^t \mathbb{P}\left(\left|\tilde{Z}^h_t\right| \geq r \mid | \tilde{Z}^h_s |=a, A, E\right) \nu_{A, E}(d s).
$$
when $r \geq b$, 
$$
\mathbb{P}\left(\left|\tilde{Z}^h_t\right| \geq r \mid E, A\right) \leq \sup _s \mathbb{P}\left(\sup _{s \leq t^{\prime} \leq t \wedge \tau} 2 \sqrt{2 }\left|\zeta_{t^{\prime}}-\zeta_s\right| \geq r-a \| \tilde{Z}^h_s \mid=a, A\right) \leq 2 \exp \left(-C (r-a)^2 / h\right).
$$
Similarly, for $r \leq a / 2$, one has $\mathbb{P}\left(\left|\tilde{Z}^h_t\right|>r \mid A, E, \tone_{\{t<\tau\}}\right)=1-\mathbb{P}\left(\left|\tilde{Z}^h_t\right| \leq r \mid A, E, \tone_{\{t<\tau\}}\right)$. Similar as above, one has
$$
\mathbb{P}\left(\left|\tilde{Z}^h_t\right| \leq r \mid A, E, \tone_{\{t<\tau\}}\right) \leq \frac{1}{1-2 \exp \left(-C  a^2 / 4 h\right)} \mathbb{P}\left(\left|\tilde{Z}^h_t\right| \leq r \mid A, E\right)
$$
while
$$
\mathbb{P}\left(\left|\tilde{Z}^h_t\right| \leq r \mid A, E\right) \leq 2 \exp \left(-C  a^2 /(4 h)\right).
$$
Therefore, now it suffices to let the following hold:
$$
\left[2 b+\frac{2 h}{C (b-a)}\right] \exp \left(-\frac{C }{2 h}(b-a)^2\right) \leq \eta_2(a, b, h) \frac{a}{2} \frac{1-4 \exp \left(-C  a^2 /(4 h)\right)}{1-2 \exp \left(-C  a^2 /(4 h)\right)}.
$$
Clearly, the above holds if one chooses
$$
\eta_2(a, b, h)=\frac{2}{a} \frac{1-2 / 8}{1-4 / 8}\left[2 b+\frac{2 h}{C (b-a)}\right] \exp \left(-\frac{C }{2 h}(b-a)^2\right).
$$

\end{proof}

\begin{lemma}\label{lmm:technical2}
Let $a>0$ satisfy $2 \exp \left(-C  a^2 / h\right)<1$. Fix some $b>0$ and $t \in [t_n,t_{n+1}]$. Define the events
\begin{equation*}
    B := \{2 a<\left|Z^h_{t_n}\right| \leq b \},
\end{equation*}
\begin{equation*}
    B' := \left\{\exists s \in\left[t_n, t\right],\left|\tilde{Z}^h_s-Z^h_{t_n}\right|=a\right\}.
\end{equation*}
Then, one has
\begin{equation}\label{eq:app2claim1}
    \mathbb{E}\left[\textbf{1}_B \textbf{1}_{B'} f\left(\left|\tilde{Z}^h_t\right|\right)\right] \leq \eta_3(2 a, b, h) \mathbb{E}\left[\textbf{1}_B f\left(\left|\tilde{Z}^h_t\right|\right)\right],
\end{equation}
where
\begin{equation*}
    \eta_3(2 a, b, h):=\frac{2\left(2 b+\eta_2(b, 2 b, h)\right) \exp \left(-C  a^2 / h\right)}{f(a)\left(1-2 \exp \left(-C  a^2 / h\right)\right)}
\end{equation*}
\end{lemma}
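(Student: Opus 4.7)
The plan is to bound the numerator $\mathbb{E}[\mathbf{1}_B \mathbf{1}_{B'} f(|\tilde{Z}^h_t|)]$ from above and $\mathbb{E}[\mathbf{1}_B f(|\tilde{Z}^h_t|)]$ from below, both in terms of $\mathbb{P}(B)$, and then take the ratio. The ingredients are the subGaussian increment estimate Lemma \ref{lmm:subgaussian}, monotonicity of $f$ with the sublinear bound $f(r)\le r$, and a reapplication of Lemma \ref{lmm:technical1}\,\eqref{eq:app1claim2} with renamed parameters.

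First I would establish the lower bound. On the event $B \cap (B')^c$ the process $\tilde{Z}^h$ satisfies $|\tilde{Z}^h_s - Z^h_{t_n}| < a$ for every $s \in [t_n,t]$, and since $|Z^h_{t_n}| > 2a$ on $B$, the triangle inequality yields $|\tilde{Z}^h_t| > a$, hence $f(|\tilde{Z}^h_t|) \geq f(a)$ there by monotonicity. Using Lemma \ref{lmm:subgaussian} on $|\tilde{Z}^h_s - Z^h_{t_n}| = 2\sqrt{2}|\zeta_s|$ together with the standard maximal-inequality reduction used in the proof of Lemma \ref{lmm:technical1}, one obtains $\mathbb{P}(B' \mid \mathcal{F}_{t_n}) \leq 2\exp(-Ca^2/h)$ on $B$, so
\[
\mathbb{E}[\mathbf{1}_B f(|\tilde{Z}^h_t|)] \geq f(a)\,\mathbb{P}(B \cap (B')^c) \geq f(a)\bigl(1 - 2\exp(-Ca^2/h)\bigr)\mathbb{P}(B),
\]
which matches the denominator of $\eta_3(2a,b,h)$.

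For the numerator, use $f(r)\le r$ and split
\[
\mathbb{E}[\mathbf{1}_B \mathbf{1}_{B'} f(|\tilde{Z}^h_t|)] \leq 2b\,\mathbb{P}(B \cap B') + \mathbb{E}\bigl[\mathbf{1}_B\,|\tilde{Z}^h_t|\,\mathbf{1}_{|\tilde{Z}^h_t|> 2b}\bigr],
\]
where in the second piece the indicator $\mathbf{1}_{B'}$ has been dropped because on $B$ the condition $|\tilde{Z}^h_t| > 2b$ forces $|\tilde{Z}^h_t - Z^h_{t_n}| > b \geq a$, so $B'$ holds automatically. The first piece is at most $4b\exp(-Ca^2/h)\mathbb{P}(B)$ by the subGaussian bound above. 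For the second piece I would run the argument of Lemma \ref{lmm:technical1}\,\eqref{eq:app1claim2} with the starting radius replaced by $b$ and the target radius by $2b$: the hitting-time conditioning used there needs only that the initial distance lies below the first parameter, which is true since $B \subset \{|Z^h_{t_n}|\le b\}$. The result is a bound of the shape $\eta_2(b,2b,h)\mathbb{E}[\mathbf{1}_B|\tilde{Z}^h_t|]$, and a further subGaussian $L^1$-estimate on the increment gives $\mathbb{E}[\mathbf{1}_B|\tilde{Z}^h_t|] \le (b+O(\sqrt{h}))\mathbb{P}(B)$. Putting everything together yields the claimed
\[
\mathbb{E}[\mathbf{1}_B \mathbf{1}_{B'} f(|\tilde{Z}^h_t|)] \leq 2\bigl(2b + \eta_2(b,2b,h)\bigr)\exp(-Ca^2/h)\,\mathbb{P}(B),
\]
and dividing by the lower bound on the denominator produces exactly $\eta_3(2a,b,h)$.

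The main obstacle is the last piece $\mathbb{E}[\mathbf{1}_B|\tilde{Z}^h_t|\mathbf{1}_{|\tilde{Z}^h_t|>2b}]$: the direct application of Lemma \ref{lmm:technical1} advertises a decay $\exp(-Cb^2/(2h))$ hidden inside $\eta_2(b,2b,h)$ rather than the $\exp(-Ca^2/h)$ appearing in $\eta_3$. Because the definition of $B$ enforces $a < |Z^h_{t_n}|/2 \le b/2$, the elementary inequality $\exp(-Cb^2/h)\le \exp(-Ca^2/h)\exp(-Cb^2/(2h))$ lets one trade one copy of the Gaussian decay for the desired $\exp(-Ca^2/h)$ prefactor, after which the bookkeeping to obtain the precise numerator is routine.
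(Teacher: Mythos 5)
Your overall plan matches the paper's: bound $\mathbb{E}[\mathbf{1}_B f(|\tilde Z^h_t|)]$ from below using the triangle inequality and the subGaussian hitting estimate (this part is correct and coincides with the paper's denominator bound $\geq f(a)(1-2\exp(-Ca^2/h))\mathbb{P}(B)$), and split the numerator into a bounded region $|\tilde Z^h_t|\le 2b$ and a tail. The bounded piece $2b\,\mathbb{P}(B\cap B')\le 4b\exp(-Ca^2/h)\mathbb{P}(B)$ is also right. The gap is in the tail piece. You drop the indicator $\mathbf{1}_{B'}$ when you write $\mathbb{E}[\mathbf{1}_B|\tilde Z^h_t|\mathbf{1}_{|\tilde Z^h_t|>2b}]\le \eta_2(b,2b,h)\mathbb{E}[\mathbf{1}_B|\tilde Z^h_t|]\lesssim b\,\eta_2(b,2b,h)\mathbb{P}(B)$. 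But the stated $\eta_3$ requires this piece to be of order $\eta_2(b,2b,h)\exp(-Ca^2/h)\mathbb{P}(B)\approx \eta_2(b,2b,h)\mathbb{P}(B\cap B')$. Your bound and the target differ by a factor of roughly $b/\exp(-Ca^2/h)$, which blows up as $h\to 0$. The paper avoids this by keeping the calculation conditioned on $B\cap B'$ throughout, so that the factor $\mathbb{P}(B\cap B')\lesssim\exp(-Ca^2/h)\mathbb{P}(B)$ multiplies the $2b+\eta_2$ estimate as a whole.

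Your proposed fix, "trade one copy of the Gaussian decay", does not repair this. Two problems. First, you quote a decay of $\exp(-Cb^2/h)$ but the decay hidden in $\eta_2(b,2b,h)$ is $\exp(-Cb^2/(2h))$. Second, and more fundamentally, the discrepancy you need to absorb is the polynomial prefactor $b+O(\sqrt h)$ versus $2\exp(-Ca^2/h)$, i.e.\ you would need $b+O(\sqrt h)\le 2\exp(-Ca^2/h)<1$, which fails for generic $b$; no rebalancing between the exponential factors $\exp(-Cb^2/(2h))$ and $\exp(-Ca^2/h)$ can compensate for a polynomial-sized mismatch. So as written, the argument proves \eqref{eq:app2claim1} with a strictly larger (though still exponentially small) $\tilde\eta_3$ in place of $\eta_3$, and does not establish the lemma with the constant as stated. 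The repair is to retain the restriction to $B\cap B'$ (observe that $B\cap\{|\tilde Z^h_t|>2b\}\subseteq B'$, so nothing is lost), run the hitting-time decomposition of Lemma \ref{lmm:technical1}\eqref{eq:app1claim2} conditionally on $B\cap B'$, and thereby obtain $\mathbb{E}[|\tilde Z^h_t|\mathbf{1}_{|\tilde Z^h_t|>2b}\mid B\cap B']\lesssim \eta_2(b,2b,h)$, which upon multiplying by $\mathbb{P}(B\cap B')\le 2\exp(-Ca^2/h)\mathbb{P}(B)$ gives the correct $\eta_3$.
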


The proof of Lemma \ref{lmm:technical2} is almost the same as the proof of Lemma \ref{lmm:technical1}. The detailed estimates needed include:

\begin{enumerate}
    \item The left-hand side of \eqref{eq:app2claim1} is controlled by
$$
\mathbb{E}\left[\tone_B \tone_{B'}\left|\tilde{Z}^h_t\right|\right] \leq\left[2 b+\eta_2(b, 2 b, h)\right] \mathbb{P}(B, B') \leq 2\left[2 b+\eta_2(b, 2 b, h)\right] \exp \left(-C  a^2 / h\right) \mathbb{P}(B).
$$
\item The expectation on the right-hand side of \eqref{eq:app2claim1} is bounded below by
$$
\mathbb{E}\left[\tone_B f\left(\left|\tilde{Z}^h_t\right|\right)\right] \geq f(a) \mathbb{P}(B)\left[1-\mathbb{P}\left(\left|\tilde{Z}^h_t\right| \leq a \mid B\right)\right] \geq f(a) \mathbb{P}(B)\left(1-2 \exp \left(-C  a^2 / h\right)\right).
$$
\end{enumerate}
We omit the details here.

\section{Some estimates for the Fokker-Planck equation}\label{app:fp}

In what follows, we establish several bounds for the solution of the Fokker-Planck equation \eqref{eq:overdampedFP} corresponding to the overdamped Langevin equation \eqref{eq:overdamped}. In other words, we prove Proposition \ref{prop:propagation} which states that the initial conditions in Assumption \ref{ass2} can be uniformly propagated through the PDE \eqref{eq:overdampedFP}.

The proof for the uniform-in-time upper and lower bounds of $\rho_t$ shares the similar idea as in \cite[Appendix A]{li2025ergodicity}. The other part ($|\nabla \log \rho_t(x)| \leq \mathcal{P}(x) |\log \rho_t(x)|$, $\mathcal{P}(x)$ is a polynomial of $x$) follows the same derivation as that of \eqref{eq:nablalogandlog} in Proposition \ref{prop:nablalog} (recall that the estimate \eqref{eq:nablalogandlog} is already uniform-in-time).

\begin{proof}[Proof of Proposition \ref{prop:propagation}]
Consider
$$
q_t(x):=\rho_t(x) / e^{- U(x)}
$$
Clearly, $q_t$ satisfies a backward Kolmogorov equation
$$
\partial_t q=-\nabla U \cdot \nabla q+\Delta q,
$$
amd it is also well-know that (see for instance \cite[Section 2.2]{li2020large}, \cite[Section 3.2]{li2022sharp})
\begin{equation}\label{eq:feynmankac}
q_t(x)=\mathbb{E} \left[ q_0\left(X_t(x)\right)\right],
\end{equation}
where $X_t(x)$ is the stochastic trajectory of
$$
d X=-\nabla U(X) d t+\sqrt{2} \, d W,\quad X_0 = x.
$$
Assumption \ref{ass2} indicates that (note that $U(x)$ is larger than some quadratic function under Assumption \ref{ass0}):
\begin{equation}\label{eq:q0bound}
C_1^{\prime} \exp \left(-C_2^{\prime}|x|^p\right) \leq q_0(x) \leq C_3\exp \left(\left(1-\gamma\right) U(x)\right) .
\end{equation}

\textbf{Upper bound:}

We apply It\^o's formula to $e^{(1-\gamma) U}$ and obtain that
$$
\frac{d}{d t} \mathbb{E} e^{(1-\gamma) U(X)}=\mathbb{E}\left\{e^{(1-\gamma) U(X)}\left[-(1-\gamma) \gamma|\nabla U(X)|^2+(1-\gamma) \Delta U(X)\right]\right\}.
$$
Since $U$ is strongly convex in the far field by Assumption \ref{ass0},
$$
-(1-\gamma) \gamma|\nabla U(X)|^2+(1-\gamma) \Delta U(X) \leq 0 + \left(- \tilde{C}\tone_{\{|X| > R\}}+C \tone_{\{|X| \leq R\}}\right) .
$$
for $C, \tilde{C} > 0 $. Hence,
\begin{equation}\label{eq:combine3}
\frac{d}{d t} \mathbb{E} e^{(1-\gamma) U(X)} \leq-\tilde{C} \mathbb{E} e^{(1-\gamma) U(X)}+C
\end{equation}
By Gr\"onwall's inequality and combining \eqref{eq:feynmankac}, \eqref{eq:q0bound} and \eqref{eq:combine3}, one has
$$
\sup _{t \geq 0} q_t(x) \leq e^{(1-\gamma) U(x)} e^{-C' t}+C.
$$
Therefore, since $U \geq 0$ by Assumption \ref{ass0}, one has
\begin{equation*}
    \rho_t(x) \leq C_3e^{-\gamma U(x)} e^{-C' t}+Ce^{-U(x)} \leq C_3'e^{-\gamma U(x)}.
\end{equation*}

\textbf{Lower bound:}

We claim that there exists $L>0$ such that for all $x$ and $t$,
\begin{equation}\label{eq:finalclaim}
\mathbb{P}\left(\left|X_t(x)\right| \leq L(|x|+1)\right) \geq 1 / 2 .
\end{equation}






In fact, by Markov's inequality,
\begin{equation*}
\begin{aligned}
\mathbb{P}(|X_t| \geq  L(|x|+1))  \leq \mathbb{E}|X_t|^2 / (L+L|x|)^2
\end{aligned}
\end{equation*}
By It\^o's formula and the far-field convexity of $U$ in Assumption \ref{ass0}, one has
\begin{equation*}
    \frac{d}{dt}\mathbb{E}|X_t|^2 = 1 - \mathbb{E}\left[2X_t \cdot \nabla U(X_t)\right] \leq C - C' \mathbb{E}|X_t|^2\tone_{\{|X_t| > \tilde{R} \}},
\end{equation*}
which implies
\begin{equation*}
    \mathbb{E}|X_t|^2\tone_{\{|X_t| > \tilde{R} \}} \leq \mathbb{E}|X_t|^2 \leq |x|^2 + \int_0^t \left(C - C' \mathbb{E}|X_{t'}|^2\tone_{\{|X_{t'}| > \tilde{R} \}} \right) dt'.
\end{equation*}
Consequently, 
\begin{equation*}
    \sup_{t \geq 0}\mathbb{E}|X_t|^2 \leq C(|x|^2 + 1).
\end{equation*}
Choosing $L > 2C$, one has
\begin{equation*}
    \mathbb{P}(|X_t| \geq  L(|x|+1))  \leq 1/2.
\end{equation*}
Finally, by \eqref{eq:finalclaim} and \eqref{eq:q0bound}, ono has
$$
\inf_t q_t(x) \geq \mathbb{E} C_1' \exp \left(-C_2'\left|X_t(x)\right|^p\right) \geq C_1'' \exp \left(-C_2'' |x|^p\right).
$$

\end{proof}

\bibliographystyle{plain}
\bibliography{main}

\begin{thebibliography}{10}

\bibitem{ambrosio1995minimizing}
Luigi Ambrosio.
\newblock Minimizing movements.
\newblock {\em Rendiconti Accademia Nazionale delle Scienze detta dei XL
  Memorie di Matematica e Applicazioni}, 19(5):191--246, 1995.

\bibitem{bakry2006diffusions}
Dominique Bakry and Michel {\'E}mery.
\newblock Diffusions hypercontractives.
\newblock In {\em S{\'e}minaire de Probabilit{\'e}s XIX 1983/84: Proceedings},
  pages 177--206. Springer, 2006.

\bibitem{bernstein1906generalisation}
Serge Bernstein.
\newblock Sur la g{\'e}n{\'e}ralisation du probl{\`e}me de {Dirichlet}:
  {Premi{\`e}re} partie.
\newblock {\em Mathematische Annalen}, 62(2):253--271, 1906.

\bibitem{bernstein1910generalisation}
Serge Bernstein.
\newblock Sur la g{\'e}n{\'e}ralisation du probl{\`e}me de {Dirichlet}:
  {Deuxi{\`e}me} partie.
\newblock {\em Mathematische Annalen}, 69(1):82--136, 1910.

\bibitem{bolley2005weighted}
Fran{\c{c}}ois Bolley and C{\'e}dric Villani.
\newblock Weighted {Csisz{\'a}r-Kullback-Pinsker} inequalities and applications
  to transportation inequalities.
\newblock In {\em Annales de la Facult{\'e} des sciences de Toulouse:
  Math{\'e}matiques}, volume~14, pages 331--352, 2005.

\bibitem{chewi2024analysis}
Sinho Chewi, Murat~A Erdogdu, Mufan Li, Ruoqi Shen, and Matthew~S Zhang.
\newblock Analysis of {Langevin Monte Carlo} from {Poincar\'e} to
  {log-Sobolev}.
\newblock {\em Foundations of Computational Mathematics}, pages 1--51, 2024.

\bibitem{de1993new}
Ennio De~Giorgi.
\newblock New problems on minimizing movements.
\newblock In C.~Baiocchi and J.~L. Lions, editors, {\em Boundary Value Problems
  for PDE and Applications}, pages 81--98. Masson, Paris, 1993.

\bibitem{deng2020semi}
Chang-Song Deng and Wei Liu.
\newblock Semi-implicit {Euler--Maruyama} method for non-linear time-changed
  stochastic differential equations.
\newblock {\em BIT Numerical Mathematics}, 60(4):1133--1151, 2020.

\bibitem{du2024collision}
Kai Du and Lei Li.
\newblock A collision-oriented interacting particle system for {Landau-type}
  equations and the molecular chaos.
\newblock {\em arXiv preprint arXiv:2408.16252}, 2024.

\bibitem{durrett2018stochastic}
Richard Durrett.
\newblock {\em Stochastic calculus: a practical introduction}.
\newblock CRC press, 2018.

\bibitem{eberle2016reflection}
Andreas Eberle.
\newblock Reflection couplings and contraction rates for diffusions.
\newblock {\em Probability Theory and Related Fields}, 166(3):851--886, 2016.

\bibitem{eberle2019couplings}
Andreas Eberle, Arnaud Guillin, and Raphael Zimmer.
\newblock Couplings and quantitative contraction rates for {Langevin} dynamics.
\newblock {\em Annals of Probability}, 47(4):1982--2010, 2019.

\bibitem{feng2023quantitative}
Xuanrui Feng and Zhenfu Wang.
\newblock Quantitative propagation of chaos for 2d viscous vortex model on the
  whole space.
\newblock {\em arXiv preprint arXiv:2310.05156}, 2023.

\bibitem{he2015unconditional}
Yinnian He.
\newblock Unconditional convergence of the {Euler} semi-implicit scheme for the
  three-dimensional incompressible {MHD} equations.
\newblock {\em IMA Journal of Numerical Analysis}, 35(2):767--801, 2015.

\bibitem{higham2011stochastic}
Desmond~J Higham.
\newblock Stochastic ordinary differential equations in applied and
  computational mathematics.
\newblock {\em IMA Journal of Applied Mathematics}, 76(3):449--474, 2011.

\bibitem{higham2007strong}
Desmond~J Higham and Peter~E Kloeden.
\newblock Strong convergence rates for backward {Euler} on a class of nonlinear
  jump-diffusion problems.
\newblock {\em Journal of Computational and Applied Mathematics},
  205(2):949--956, 2007.

\bibitem{higham2002strong}
Desmond~J Higham, Xuerong Mao, and Andrew~M Stuart.
\newblock Strong convergence of {Euler-type} methods for nonlinear stochastic
  differential equations.
\newblock {\em SIAM Journal on Numerical Analysis}, 40(3):1041--1063, 2002.

\bibitem{hodgkinson2021implicit}
Liam Hodgkinson, Robert Salomone, and Fred Roosta.
\newblock Implicit {Langevin} algorithms for sampling from log-concave
  densities.
\newblock {\em Journal of Machine Learning Research}, 22(136):1--30, 2021.

\bibitem{hu1996semi}
Yaozhong Hu.
\newblock Semi-implicit {Euler-Maruyama} scheme for stiff stochastic equations.
\newblock In {\em Stochastic Analysis and Related Topics V: The Silivri
  Workshop, 1994}, pages 183--202. Springer, 1996.

\bibitem{hutzenthaler2011strong}
Martin Hutzenthaler, Arnulf Jentzen, and Peter~E Kloeden.
\newblock Strong and weak divergence in finite time of {Euler's} method for
  stochastic differential equations with non-globally {Lipschitz} continuous
  coefficients.
\newblock {\em Proceedings of the Royal Society A: Mathematical, Physical and
  Engineering Sciences}, 467(2130):1563--1576, 2011.

\bibitem{jin2023ergodicity}
Shi Jin, Lei Li, Xuda Ye, and Zhennan Zhou.
\newblock Ergodicity and long-time behavior of the random batch method for
  interacting particle systems.
\newblock {\em Mathematical Models and Methods in Applied Sciences},
  33(01):67--102, 2023.

\bibitem{ju2025modified}
Zichang Ju, Lei Li, and Yuliang Wang.
\newblock A modified tamed scheme for stochastic differential equations with
  superlinear drifts.
\newblock {\em arXiv preprint arXiv:2507.09475}, 2025.

\bibitem{kp92}
Peter~E Kloeden and Eckhard Platen.
\newblock {\em Numerical solution of stochastic differential equations}.
\newblock Springer-Verlag Berlin Heidelberg New York, 1992.

\bibitem{li2020large}
Lei Li and Jian-Guo Liu.
\newblock Large time behaviors of upwind schemes and {B-schemes} for
  {Fokker-Planck} equations on {$\mathbb{R}$} by jump processes.
\newblock {\em Mathematics of Computation}, 89(325):2283--2320, 2020.

\bibitem{li2024geometric}
Lei Li, Jian-Guo Liu, and Yuliang Wang.
\newblock Geometric ergodicity of {SGLD} via reflection coupling.
\newblock {\em Stochastics and Dynamics}, 24(05):2450035, 2024.

\bibitem{li2025ergodicity}
Lei Li, Chen Wang, and Mengchao Wang.
\newblock Ergodicity and error estimate of laws for a random splitting
  {Langevin Monte Carlo}.
\newblock {\em arXiv preprint arXiv:2510.07676}, 2025.

\bibitem{li2022sharp}
Lei Li and Yuliang Wang.
\newblock A sharp uniform-in-time error estimate for {Stochastic} {Gradient}
  {Langevin} {Dynamics}.
\newblock {\em CSIAM Transactions on Applied Mathematics}, 6(4):711--759, 2025.

\bibitem{li1986parabolic}
Peter Li and Shing~Tung Yau.
\newblock On the parabolic kernel of the {Schr{\"o}dinger} operator.
\newblock {\em Acta mathematica}, 156(3-4):153--201, 1986.

\bibitem{liu2023backward}
Wei Liu, Xuerong Mao, and Yue Wu.
\newblock The backward {Euler-Maruyama} method for invariant measures of
  stochastic differential equations with super-linear coefficients.
\newblock {\em Applied Numerical Mathematics}, 184:137--150, 2023.

\bibitem{liu2022p}
Zhihui Liu.
\newblock {$L^p$-convergence} rate of backward {Euler} schemes for monotone
  {SDEs}.
\newblock {\em BIT Numerical Mathematics}, 62(4):1573--1590, 2022.

\bibitem{luo2016exponential}
Dejun Luo and Jian Wang.
\newblock Exponential convergence in {$L^p$-Wasserstein} distance for diffusion
  processes without uniformly dissipative drift.
\newblock {\em Mathematische Nachrichten}, 289(14-15):1909--1926, 2016.

\bibitem{majka2020nonasymptotic}
Mateusz~B Majka, Aleksandar Mijatovi{\'c}, and {\L}ukasz Szpruch.
\newblock Nonasymptotic bounds for sampling algorithms without log-concavity.
\newblock {\em Annals of Applied Probability}, 30(4):1534--1581, 2020.

\bibitem{mao2013strong}
Xuerong Mao and Lukasz Szpruch.
\newblock Strong convergence and stability of implicit numerical methods for
  stochastic differential equations with non-globally {Lipschitz} continuous
  coefficients.
\newblock {\em Journal of Computational and Applied Mathematics}, 238:14--28,
  2013.

\bibitem{mattingly2002ergodicity}
Jonathan~C Mattingly, Andrew~M Stuart, and Desmond~J Higham.
\newblock Ergodicity for {SDEs} and approximations: locally {Lipschitz} vector
  fields and degenerate noise.
\newblock {\em Stochastic Processes and their Applications}, 101(2):185--232,
  2002.

\bibitem{mou2022improved}
Wenlong Mou, Nicolas Flammarion, Martin~J Wainwright, and Peter~L Bartlett.
\newblock Improved bounds for discretization of {Langevin} diffusions:
  Near-optimal rates without convexity.
\newblock {\em Bernoulli}, 28(3):1577--1601, 2022.

\bibitem{otto2000generalization}
Felix Otto and C{\'e}dric Villani.
\newblock Generalization of an inequality by {Talagrand} and links with the
  logarithmic {Sobolev} inequality.
\newblock {\em Journal of Functional Analysis}, 173(2):361--400, 2000.

\bibitem{pinsker1963information}
MS~Pinsker.
\newblock Information and information stability of random quantities and
  processes.
\newblock {\em Holden-Day}, 1964.

\bibitem{rudin1976principles}
Walter Rudin.
\newblock {\em Principles of mathematical analysis}, volume~3.
\newblock McGraw-hill New York, 1976.

\bibitem{schuh2024global}
Katharina Schuh.
\newblock Global contractivity for {Langevin} dynamics with
  distribution-dependent forces and uniform in time propagation of chaos.
\newblock In {\em Annales de l'Institut Henri Poincare (B) Probabilites et
  statistiques}, volume~60, pages 753--789. Institut Henri Poincar{\'e}, 2024.

\bibitem{talagrand1991new}
Michel Talagrand.
\newblock A new isoperimetric inequality and the concentration of measure
  phenomenon.
\newblock In {\em Geometric Aspects of Functional Analysis}, pages 94--124.
  Springer, 1991.

\bibitem{vempala2019rapid}
Santosh Vempala and Andre Wibisono.
\newblock Rapid convergence of the unadjusted {Langevin} algorithm:
  Isoperimetry suffices.
\newblock {\em Advances in Neural Information Processing Systems}, 32, 2019.

\bibitem{vershynin2018high}
Roman Vershynin.
\newblock {\em High-dimensional probability: An introduction with applications
  in data science}, volume~47.
\newblock Cambridge university press, 2018.

\bibitem{villani2008optimal}
C{\'e}dric Villani.
\newblock {\em Optimal transport: old and new}, volume 338.
\newblock Springer, 2008.

\bibitem{wang2020exponential}
Feng-Yu Wang.
\newblock Exponential contraction in {Wasserstein} distances for diffusion
  semigroups with negative curvature.
\newblock {\em Potential Analysis}, 53(3):1123--1144, 2020.

\bibitem{wang2024weak}
Xiaojie Wang, Yuying Zhao, and Zhongqiang Zhang.
\newblock Weak error analysis for strong approximation schemes of {SDEs} with
  super-linear coefficients.
\newblock {\em IMA Journal of Numerical Analysis}, 44(5):3153--3185, 2024.

\bibitem{zhang2019backward}
Chengjian Zhang and Ying Xie.
\newblock Backward {Euler-Maruyama} method applied to nonlinear hybrid
  stochastic differential equations with time-variable delay.
\newblock {\em Science China Mathematics}, 62(3):597--616, 2019.

\end{thebibliography}

\end{document}